\documentclass[12pt, dvipdfmx]{amsart}

\usepackage{amsmath,amssymb,amsthm,amsfonts}
\usepackage{bbm}
\usepackage[cal=euler, scr=rsfs]{mathalfa}
\usepackage[all]{xy}
\usepackage[dvipdfmx, colorlinks=true, backref=page]{hyperref}
\usepackage[a4paper, left=25mm, right=25mm, top=30mm, bottom=30mm]{geometry}
\usepackage{cleveref}

\numberwithin{equation}{section}

\SelectTips{eu}{12}


\theoremstyle{theorem}
\newtheorem{theorem}{Theorem}[section]
\newtheorem{proposition}[theorem]{Proposition}
\newtheorem{lemma}[theorem]{Lemma}
\newtheorem{corollary}[theorem]{Corollary}

\theoremstyle{definition}
\newtheorem{definition}[theorem]{Definition}
\newtheorem{example}[theorem]{Example}

\theoremstyle{remark}
\newtheorem{remark}[theorem]{Remark}

\newcommand{\N}{\mathbb{N}}
\newcommand{\Z}{\mathbb{Z}}

\newcommand{\R}{\mathbb{R}}

\newcommand{\diam}{\operatorname{diam}}
\newcommand{\ind}{\operatorname{ind}}
\newcommand{\Fix}{\operatorname{Fix}}

\title{Uniform Lefschetz fixed-point theory}

\author{Tsuyoshi Kato}
\address{Department of Mathematics, Kyoto University, Kyoto 606-8502, Japan}
\email{tkato@math.kyoto-u.ac.jp}

\author{Daisuke Kishimoto}
\address{Faculty of Mathematics, Kyushu University, Fukuoka 819-0395, Japan}
\email{kishimoto@math.kyushu-u.ac.jp}

\author{Mitsunobu Tsutaya}
\address{Faculty of Mathematics, Kyushu University, Fukuoka 819-0395, Japan}
\email{tsutaya@math.kyushu-u.ac.jp}

\date{\today}

\subjclass[2020]{55M20, 58C30, 57R19}

\keywords{Lefschetz fixed-point theorem, Lefschetz-Hopf theorem, uniform bounded cohomology, uniform CW complex, obstruction theory}

\begin{document}

\maketitle

\begin{abstract}
  We develop the Lefschetz fixed-point theory for noncompact manifolds of bounded geometry and uniformly continuous maps. Specifically, we define the uniform Lefschetz class $\mathscr{L}(f)$ of a uniformly continuous map $f\colon M\to M$ of a uniform simply-connected noncompact complete Riemannian manifold of bounded geometry $M$ satisfying $d(f,1)<\infty$, and prove that $\mathscr{L}(f)=0$ if and only if $f$ is uniformly homotopic to a strongly fixed-point free (without fixed-points on $M$ and at infinity) uniformly continuous map. 
  To achieve this, we introduce a new cohomology for metric spaces, called uniform bounded cohomology, which is a variant of bounded cohomology, and develop an obstruction theory formulated in terms of this cohomology. 
\end{abstract}


\section{Introduction}

The Lefschetz fixed-point theorem states that for a self-map $f\colon X\to X$ of a finite complex $X$, if the Lefschetz number $L(f)\ne 0$, then $f$ has a fixed-point. The converse was established by Fadell \cite{F} for simply-connected closed manifolds: if $X$ is a simply-connected closed manifold $M$, then $L(f)=0$ if and only if $f$ is homotopic to a fixed-point free map. Note that the compactness of $X$ is essential in both defining the Lefschetz number and proving the results above because the Lefschetz number essentially counts the number of fixed-points. Then, as a self-map of a noncompact manifold may have infinitely many fixed-points, generalizing the Lefschetz fixed-point theorem to noncompact manifolds is a challenging problem, and there have been several attempts on it, mostly in the equivariant setting. See \cite{Ho,HW,LR}, for example. Among those, we are particularly interested in Weinberger's result \cite{W}: if a self-map $f\colon M\to M$ of a noncompact manifold of bounded geometry $M$ is $C^1$-close to the identity map and the Euler class of $M$ in bounded cohomology is trivial, then $f$ is fixed-point free. Weinberger \cite{W} discussed three possible frameworks for developing fixed-point theories on noncompact manifolds. In the third setting, he proved that there is a unit-length vector field with bounded derivative on a noncompact manifold of bounded geometry if and only if its Euler class in bounded cohomology vanishes. The result above is obtained by deforming the identity map with a nonvanishing vector field. This deformation is given by a uniformly continuous homotopy (uniform homotopy, for short) and the resulting diffeomorphism is also uniformly continuous.

In this paper, we develop the Lefschetz fixed-point theory for uniformly continuous self-maps of noncompact manifolds of bounded geometry and uniform homotopies between them. Throughout, manifolds are smooth and without boundary. When studying noncompact manifolds. it is quite common to impose boundedness conditions on manifolds and maps \cite{G,GL,R1,R2,W}; in particular, bounded derivatives ensure uniform continuity. Thus, the uniformly continuous setting is the natural framework for fixed-point theory on noncompact manifolds. Furthermore, this restriction is essential: for a general self-map of a noncompact manifold with isolated fixed-points can be homotoped to a fixed-point free map by pushing fixed-points off to infinity. Uniform continuity prevents such trivial behavior and allows a meaningful fixed-point theory.

Let $f\colon M\to M$ be a self-map of a noncompact manifold $M$. Suppose there is a sequence $x_1,x_2,\ldots\in M$ such that $d(x_n,f(x_n))<1/n$ for all $n\ge 1$. If this sequence has a limit point, then the limit point is a fixed-point of $f$. If, on the other hand, the sequence has no limit point, it diverges to infinity as $n\to\infty$. In this sense, one may regard the sequence as indicating the presence of a \emph{fixed-point at infinity}. Motivated by this observation, we consider the (non)existence of fixed-points at infinity in addition to the usual fixed-points. Thus, we introduce the following notion: a self-map $f\colon M\to M$ is \emph{strongly fixed-point free} if there is $\epsilon>0$ satisfying $d(x,f(x))\ge\epsilon$ for any $x\in M$.

Let $f\colon M\to M$ be a uniformly continuous self-map of a simply-connected complete Riemannian manifold of bounded geometry $M$, possibly noncompact, satisfying a mild condition called \emph{uniformness} (Definition \ref{uniform manifold}). For example, any Riemannian manifold admitting an isometric cocompact action of a discrete group is uniform. When $d(f,1)<\infty$, we define the \emph{uniform Lefschetz class} $\mathscr{L}(f)$ by means of a certain obstruction class, in analogy with Fadell's construction in the compact case \cite{F}, where
\[
  d(f,1)=\sup_{x\in M}d(f(x),x).
\]
It will turn out that $\mathscr{L}(f)$ actually lies in the $0$-th uniformly finite homology $H_0^\mathrm{uf}(M)$ introduced by Block and Weinberger \cite{BW}. The condition $d(f,1)<\infty$ is essential for controlling maps via uniformly continuous homotopies. Now we state the main theorem.

\begin{theorem}
  [Theorem \ref{main late}]
  \label{main}
  Let $M$ be a uniform simply-connected complete Riemannian manifold of bounded geometry with $\dim M\ge 2$. Let $f\colon M\to M$ be a uniformly continuous map satisfying $d(f,1)<\infty$. Then $\mathscr{L}(f)=0$ if and only if $f$ is uniformly homotopic to a strongly fixed-point free uniformly continuous map.
\end{theorem}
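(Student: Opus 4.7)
The plan is to mirror Fadell's argument \cite{F} inside the uniform obstruction theory developed earlier in the paper. The central geometric object is the graph map
\[
  (1,f)\colon M\to M\times M,\qquad x\mapsto(x,f(x)),
\]
which is uniformly continuous because $f$ is, and lies within bounded distance of the diagonal embedding $\Delta\colon M\hookrightarrow M\times M$ because $d(f,1)<\infty$. A uniform homotopy from $f$ to a strongly fixed-point free map is, by definition, a uniform homotopy of $(1,f)$ whose image avoids an $\epsilon$-tubular neighborhood $N_\epsilon(\Delta)$ of $\Delta$ for some $\epsilon>0$. The class $\mathscr{L}(f)$ will be designed to be exactly the obstruction to such a deformation.

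The ``only if'' direction is essentially formal. Suppose $f$ is uniformly homotopic, via a uniform homotopy $H$, to a strongly fixed-point free uniformly continuous map $g$. Then $d(g,1)\le d(f,1)+\sup_t d(H_t,f)<\infty$, so $\mathscr{L}(g)$ is defined and equals $\mathscr{L}(f)$ by the uniform-homotopy invariance built into the obstruction-theoretic construction. On the other hand the graph of $g$ lies uniformly outside $\Delta$, so a direct chase of the construction shows that every local index vanishes and the representative uniformly finite $0$-cycle for $\mathscr{L}(g)\in H_0^{\mathrm{uf}}(M)$ is trivial.

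For the ``if'' direction, I would fix a uniform CW structure on $M$ (available from bounded geometry together with the paper's notion of uniformness) and inductively deform $(1,f)$ off $\Delta$ skeleton by skeleton, keeping the deformation uniformly continuous with a uniform lower bound on its distance to $\Delta$. At each stage the obstruction to extending across the next skeleton is a uniform bounded cochain valued in the coefficient system given by the normal sphere bundle of $\Delta$ in $M\times M$; simple connectivity of $M$ together with $\dim M\ge 2$ trivializes this local coefficient system as in the classical case. These cochains assemble into classes in the uniform bounded cohomology of $M$, and the Poincar\'e-type duality supplied by the obstruction theory identifies the final nontrivial class with $\mathscr{L}(f)\in H_0^{\mathrm{uf}}(M)$. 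Vanishing of $\mathscr{L}(f)$ therefore permits extension across every skeleton while maintaining uniform bounds.

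The hard part I expect is the top-dimensional step coupled with control ``at infinity''. In the compact case the top obstruction is a single cohomology class tested against the fundamental class, whereas here one must simultaneously cancel an infinite family of local indices whose sum is the uniformly finite $0$-cycle representing $\mathscr{L}(f)$. When this cycle bounds in $H_0^{\mathrm{uf}}(M)$ there is a uniformly bounded $1$-chain witnessing the cancellation, but realizing this cancellation as a genuinely \emph{uniformly continuous} homotopy requires that the elementary push-off moves be carried out in bi-Lipschitz coordinate charts whose size depends only on the geometry bounds of $M$ and on $\dim M$, not on the location of the chart. Establishing this location-independent uniformity is what upgrades mere fixed-point freeness to \emph{strong} fixed-point freeness, i.e.\ a uniform gap $\inf_{x\in M}d(x,g(x))\ge\epsilon>0$, and is the essential improvement over the compact argument.
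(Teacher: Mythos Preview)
Your outline is correct and matches the paper's approach: the graph section $s_f=(1,f)$ of the Fadell--Neuwirth bundle, skeletal deformation via the uniform obstruction theory (with the lower obstructions vanishing since $\pi_k(M,M-x_0)=0$ for $k<n$), and Poincar\'e duality identifying the top obstruction $\mathfrak{o}_n$ with $\mathscr{L}(f)$. The ``hard part'' you anticipate at the top step---realizing the bounding $1$-chain as a genuinely uniformly continuous homotopy with a uniform gap---is in the paper already packaged into Theorem~\ref{obstruction} (via Lemmas~\ref{Ascoli-Arzela}, \ref{d1}, \ref{d2}), so once that machinery is available the proof of Theorem~\ref{main late} is only a few lines and treats both directions symmetrically through the equivalence (i)$\Leftrightarrow$(ii) there, rather than arguing the ``only if'' direction separately via local indices.
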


\begin{remark}
  If a manifold has dimension $1$, we cannot define the uniform Lefschetz class for technical reasons. However, any simply-connected $1$-dimensional manifold of bounded geometry is biLipschitz homeomorphic to $\R$ with the Euclidean metric (See Example \ref{triangulation}). It is straightforward to see that, by applying a shift, every uniformly continuous map $f\colon\R\to\R$ with $d(f,1)<\infty$ can be uniformly homotoped to a strongly fixed-point free map, where $\R$ is equipped with the Euclidean metric. Therefore, in the $1$-dimensional case, the uniform Lefschetz class need not be considered.
\end{remark}

As noted above, the uniform Lefschetz class can be viewed as an element of the $0$-th uniformly finite homology. This homology, in turn, is closely related to the amenability of a metric space in the sense of Block and Weinberger \cite{BW}. As a consequence, we obtain a result for nonamenable manifolds (Corollary \ref{amenable Lefschetz}). In particular, we have the following.

\begin{corollary}
  [Corollary \ref{corollary}]
  Let $M$ be the universal cover of a closed connected manifold with nonamenable $\pi_1$. Then any uniformly continuous map $f\colon M\to M$ with $d(f,1)<\infty$ is uniformly homotopic to a strongly fixed-point free uniformly continuous map.
\end{corollary}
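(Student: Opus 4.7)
The strategy is to derive this as an immediate specialization of the nonamenable Lefschetz statement (Corollary~\ref{amenable Lefschetz}) once the hypotheses are verified. Writing $N$ for the closed connected base manifold and equipping $M$ with the pulled-back Riemannian metric, $M$ is simply-connected by construction and complete because a covering of a complete Riemannian manifold is complete. Moreover the deck group $\pi_1(N)$ acts on $M$ freely, properly, isometrically, and cocompactly, so $M$ belongs to the class of examples highlighted after Definition~\ref{uniform manifold}; in particular $M$ has bounded geometry and is uniform.

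Next I would transfer nonamenability from the fundamental group to the metric space $M$. By the \v{S}varc--Milnor lemma the orbit map identifies $\pi_1(N)$, equipped with any word metric, as a quasi-isometric model for $M$. Since the Block--Weinberger notion of amenability is a quasi-isometry invariant of bounded-geometry spaces and coincides on a finitely generated group with its classical amenability, nonamenability of $\pi_1(N)$ propagates to nonamenability of $M$. The Block--Weinberger theorem then yields $H_0^{\mathrm{uf}}(M)=0$.

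Because $d(f,1)<\infty$, the uniform Lefschetz class $\mathscr{L}(f)\in H_0^{\mathrm{uf}}(M)$ is defined and automatically vanishes. Theorem~\ref{main} then supplies the uniform homotopy to a strongly fixed-point free uniformly continuous map when $\dim M\ge 2$, while the $\dim M=1$ case is covered by the remark following that theorem (and is in fact vacuous for nonamenable $\pi_1$, since $\pi_1(N)$ would have to act cocompactly on $\R$). The only substantive step beyond bookkeeping is the quasi-isometric transfer of nonamenability from $\pi_1(N)$ to $M$, but this is standard in coarse geometry and is already absorbed into Corollary~\ref{amenable Lefschetz}; no further analytic input is expected.
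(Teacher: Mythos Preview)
Your proposal is correct and follows essentially the same route as the paper: verify that $M$ is a uniform simply-connected complete Riemannian manifold of bounded geometry, transfer nonamenability from $\pi_1(N)$ to $M$, and then invoke Corollary~\ref{amenable Lefschetz} (with the $1$-dimensional case handled separately as vacuous). The only cosmetic difference is that the paper packages the nonamenability transfer into Example~\ref{Galois cover space} rather than spelling out the \v{S}varc--Milnor argument, but the content is the same.
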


\begin{remark}
  As in Example \ref{connected sum}, one can construct a self-map of the universal cover of a closed connected manifold with amenable $\pi_1$ that is not uniformly homotopic to any strongly fixed-point free uniformly continuous map.
\end{remark}

The classical Lefschetz-Hopf theorem states that if a map $f\colon X\to X$ of a closed manifold $X$ has isolated fixed-points, then
\[
  L(f)=\sum_{x\in\Fix(f)}\ind_x(f)
\]
where $\ind_x(f)$ denotes the local index of $f$ at a fixed-point $x\in\Fix(f)$. We extend the Lefschetz-Hopf theorem to noncompact manifolds. However, for a self-map of a noncompact manifold, the set of isolated fixed-points may be infinite, making the above summation over local indices ill-defined. In \cite{KKT1,KKT2}, the authors introduced a new method for counting infinitely many points on a noncompact manifold that arises as a Galois covering of a closed manifold. We adopt their approach to establish the following version of the Lefschetz-Hopf theorem. The precise meaning of the statement will be clarified in Section \ref{Localization}.

\begin{theorem}
  [Theorem \ref{Lefschetz-Hopf late}]
  \label{Lefschetz-Hopf}
  Let $M$ be the universal cover of a closed manifold $N$, and let $\pi=\pi_1(N)$. For a strongly tame uniformly continuous map $f\colon M\to M$, the uniform Lefschetz class $\mathscr{L}(f)$ is represented by the map
  \[
    \pi\to\Z,\quad g\mapsto\sum_{x\in gK\cap\Fix(f)}\ind_x(f)
  \]
  where $K$ is a fundamental domain.
\end{theorem}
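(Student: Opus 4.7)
The strategy is to carry out the classical Lefschetz--Hopf localization argument, but in the uniform setting, so that all the fixed-points of $f$ are handled simultaneously with uniformly bounded control. First I would unwind the definition of $\mathscr{L}(f)$: in analogy with Fadell's treatment of the compact case, the class arises as the obstruction to deforming $f$ off the identity map locally, measured on top-dimensional cells of a uniform CW structure, and the output is a bounded $0$-chain in the uniformly finite chain complex of $M$. Using the cocompact action of $\pi = \pi_1(N)$ and the decomposition $M = \bigsqcup_{g \in \pi} gK$, the class $\mathscr{L}(f) \in H_0^\mathrm{uf}(M)$ is recorded by reading off the total coefficient of the representing cocycle over each translate $gK$; thus verifying the theorem reduces to computing this coefficient as $\sum_{x \in gK \cap \Fix(f)} \ind_x(f)$.

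Next, I would invoke strong tameness, which I read as the assertion that $\Fix(f)$ is uniformly discrete in $M$, has uniformly bounded cardinality per fundamental domain, and admits uniformly controlled local Euclidean charts around each fixed-point. Under this hypothesis I would refine the uniform CW structure so that every $x \in \Fix(f)$ lies in the interior of a pairwise disjoint chart $U_x$ contained in the unique translate $gK$ containing $x$. A uniform straight-line homotopy supported in a slight enlargement of $\bigsqcup_x U_x$, realized inside the uniformly continuous category thanks to strong tameness, deforms the obstruction cocycle to one concentrated on cells meeting $\bigsqcup_x U_x$.

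On each chart $U_x$, the value of the obstruction cocycle is, by the standard identification of the obstruction to pushing a local self-map off the diagonal with the local degree of $y \mapsto y - f(y)$ (in a Euclidean chart around $x$), equal to the classical local index $\ind_x(f)$. Summing over all fixed-points contained in $gK$ yields the prescribed value $\sum_{x \in gK \cap \Fix(f)} \ind_x(f)$ at $g \in \pi$, and strong tameness guarantees that this sum is bounded uniformly in $g$, so that the assignment defines a bounded $0$-chain representing $\mathscr{L}(f) \in H_0^\mathrm{uf}(M)$.

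The main obstacle is uniformity. Classically, one handles a single isolated fixed-point by shrinking a neighborhood and using excision, but here infinitely many fixed-points must be treated at once, with a single uniformly continuous homotopy deforming $f$ simultaneously near all of them, and within a single uniform CW refinement of $M$. This forces reliance on strong tameness both for uniform separation of fixed-points and for uniformly small chart neighborhoods, together with the uniform cellular approximation machinery developed earlier. A secondary, more technical point is that identifying the obstruction value on $U_x$ with the classical local index must itself be checked in the uniform bounded cohomology framework; this is verified by comparing, via a compactly supported excision into $\overline{U_x}$, the uniform obstruction cocycle with the ordinary local obstruction whose value is known to be $\ind_x(f)$.
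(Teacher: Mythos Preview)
Your proposal is essentially correct and follows the same localization strategy as the paper, though the paper's execution is somewhat more direct. In the paper, strong tameness is defined so that (i) fixed points are $\delta$-separated, (ii) $d(x,f(x))\ge\epsilon$ outside $N_\delta(\Fix(f))$, and (iii) each $N_\delta(x)$ for $x\in\Fix(f)$ lies inside a single top-dimensional simplex $\sigma_x$. Condition (iii) means no CW refinement beyond a finite barycentric subdivision (to ensure at most one fixed point per simplex) is needed, and condition (ii) means the obstruction cocycle $o_n$ already vanishes on any top simplex not containing a fixed point---no auxiliary homotopy is required to ``concentrate'' it. The paper then computes $o_n\frown\sigma_x$ directly: it equals $a\cdot v$ for a vertex $v$ of $\sigma_x$, identifies $a=\ind_x(f)$ by the classical local-degree computation (citing Dold), and moves $v$ to the barycenter of $\sigma_x$ along a linear $1$-simplex, noting that the family of all such $1$-simplices is uniform so the resulting $0$-chain $\sum_x\ind_x(f)\,v_x$ represents $\mathscr{L}(f)$. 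The translation to $\ell^\infty(\pi)_\pi$ then follows from the identification $H_0^\mathrm{ub}(M)\cong H_0^\mathrm{uf}(M)\cong\ell^\infty(\pi)_\pi$.

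Your version, which introduces charts $U_x$ and a supporting straight-line homotopy, is a valid reformulation but does a little more work than necessary: the point of condition (ii) in strong tameness is precisely that the section $s_f$ already lands in $\mathrm{Int}_\epsilon(M\times M-\Delta)$ over simplices away from $\Fix(f)$, so the cocycle is zero there without any deformation. Otherwise the logic---uniform separation, identification of the local value with $\ind_x(f)$, uniform boundedness of the resulting function on $\pi$---matches the paper's argument.
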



We now explain how the Lefschetz fixed-point theory is developed in the uniformly continuous setting. As mentioned above, the uniform Lefschetz class $\mathscr{L}(f)$ is defined in terms of a certain obstruction class. However, this obstruction class does not, in general, belong to ordinary cohomology, since ordinary cohomology behaves poorly for noncompact manifolds and fails to distinguish uniform homotopy equivalences from general homotopy equivalences. To overcome these difficulties, in Section \ref{Uniform bounded cohomology}, we introduce a new cohomology for metric spaces, called \emph{uniform bounded cohomology}. It is defined using singular cochains that are bounded on families of singular simplices uniformly distributed in a metric space. This cohomology is well-suited for noncompact manifolds of bounded geometry and is invariant under uniform homotopy.

In Section \ref{Uniform bounded homology}, we also introduce \emph{uniform bounded homology} as the natural dual of uniform bounded cohomology; this coincides with Engel's uniformly locally finite homology \cite{E}. The terminology change is deliberate, as it emphasizes the duality with uniform bounded cohomology. There is a natural map from uniform bounded homology into Block and Weinberger's uniformly finite homology \cite{BW}, which is an isomorphism in dimension $0$ when a metric space is geodesic. As in Section \ref{Poincare duality section}, within uniform bounded homology, one can define the fundamental class of an oriented connected manifold of bounded geometry. Remarkably, Poincar\'e duality holds in this setting; for an oriented connected manifold of bounded geometry, taking the cap product with the fundamental class gives an isomorphism between uniform bounded homology and uniform bounded cohomology.

Uniform bounded cohomology may be viewed as a variant of Gromov's bounded cohomology. Unlike the latter, uniform bounded cohomology satisfies excision, allowing us to construct a cellular uniform bounded cohomology for a \emph{uniform CW complex} as in Section \ref{Uniform bounded cellular (co)homology}. Using this framework, in Section \ref{Obstruction theory}, we develop an obstruction theory based on uniform bounded cellular cohomology and define the uniform Lefschetz class via an obstruction class in uniform bounded cohomology associated with the Fadell-Neuwirth fibration together with the Poincar\'e duality. From this construction, as in Section \ref{Uniform Lefschetz fixed-point theorem}, Theorem \ref{main} follows directly from the obstruction theory. We then generalize Theorem \ref{main} to the coincidence theorem.

In \cite{KKT1}, the authors developed a method for counting infinitely many points on a Galois covering of a closed manifold in order to establish the Poincar\'e-Hopf theorem for such coverings. In Section \ref{Localization}, we adopt this approach and prove Theorem \ref{Lefschetz-Hopf} by localizing the obstruction class at the fixed-points. We then return to the Poincar\'e Hopf theorem for Galois coverings of closed manifolds and improve it in the uniformly continuous setting.

\subsection*{Acknowledgement}

The authors were partially supported by JSPS KAKENHI Grant numbers JP23K22394 (Kato), JP22K03284 (Kishimoto), and JP22K03317 (Tsutaya).


\section{Uniform bounded cohomology}\label{Uniform bounded cohomology}

In this section, we introduce a new cohomology theory for metric spaces, called uniform bounded cohomology, which can be considered as a variant of Gromov's bounded cohomology. We show that uniform bounded cohomology is a uniform homotopy invariant and, unlike bounded cohomology, it satisfies the Mayer-Vietoris sequence and excision.

\subsection{Uniform bounded cochain}

Recall from \cite{G} that a bounded $n$-cochain on a space $X$ is an $n$-cochain $u$ satisfying
\[
  \sup_\sigma|u(\sigma)|<\infty
\]
where $\sigma$ ranges over all singular $n$-simplices in $X$. The collection of bounded cochains forms a cochain complex, and its cohomology is Gromov's bounded cohomology. By definition, bounded cohomology is functorial with respect to all continuous maps and is invariant under ordinary homotopy. However, it does not satisfy excision, so cellular cohomology cannot be formulated in terms of bounded cohomology. In particular, we cannot develop obstruction theory based on the skeletal filtration using bounded cohomology. To address this, we define a new variant of bounded cohomology that is invariant under uniform homotopy and satisfies excision.

Let $\Delta^n$ denote the standard $n$-simplex with its usual metric, and let $X$ be a metric space. We say that a family of singular $n$-simplices $\{\sigma_i\colon\Delta^n\to X\}_{i\in I}$ is said to be \emph{uniform} if the following conditions hold:

\begin{enumerate}
  \item For any $r>0$, there exists $K_r\ge 0$ such that the ball $B_r(x)$ of radius $r$ around any point $x\in X$ intersects at most $K_r$ singular simplices $\sigma_i$.

  \item The family $\{\sigma_i\}_{i\in I}$ is equicontinuous.
\end{enumerate}

Note that since $\Delta^n$ is compact, a family of singular simplices $\{\sigma_i\colon\Delta^n\to X\}_{i\in I}$ is equicontinuous if and only if it is uniformly equicontinuous; that is, for every $\epsilon>0$, there exists $\delta>0$ such that for all $x,y\in\Delta^n$ with $d(x,y)<\delta$, one has $d(\sigma_i(x),\sigma_i(y))<\epsilon$ for all $i\in I$.

\begin{example}
  \label{bounded space 1}
  Let $X$ be a bounded metric space, and let $\{\sigma_i\colon\Delta^n\to X\}_{i\in I}$ be a uniform family of singular $n$-simplices in $X$. If $r$ is larger than the diameter of $M$, then by local finiteness, the family $\{\sigma_i\}_{i\in I}$ consists of at most $K_r$ simplices. Therefore, $\{\sigma_i\}_{i\in I}$ is finite.
\end{example}

\begin{example}
  \label{uniform simplicial complex}
  A \emph{uniform simplicial complex} is a (geometric) simplicial complex satisfying the following conditions:

  \begin{enumerate}
    \item The piecewise Euclidean metric is assigned so that every edge has length $1$.

    \item There exists $K\ge 0$ such that each vertex belongs to at most $K$ simplices.
  \end{enumerate}

  \noindent Let $S$ denote the set of inclusions of $n$-simplices of a uniform simplicial complex. The first condition ensures that $S$ is equicontinuous. For any $r>0$, these conditions guarantee the existence of $L_r\ge 0$ such that the ball $B_r(x)$ of radius $r$ around any point $x\in X$ contains at most $L_r$ vertices. By the second condition, $B_r(x)$ therefore meets at most $L_r\cdot K$ simplices. Hence $S$ is uniform.
\end{example}

A \emph{pseudo-normed abelian group} is a pair $(A,|\cdot|)$ consisting of an abelian group $A$ and a function $|\cdot|\colon A\to\R_{\ge 0}$, called a pseudo-norm, satisfying
\[
  |0|=0\quad\text{and}\quad|a+b|\le|a|+|b|
\]
for $a,b\in A$. We denote by $\underline{A}$ a pseudo-normed abelian group having the underlying group $A$. 

\begin{example}
  The group of integers $\Z$ equipped with the standard absolute value is a pseudo-normed abelian group. In what follows, we only consider this standard pseudo-norm on $\Z$.
\end{example}

\begin{example}
  \label{fin gen}
  The product $\Z^n$ carries a pseudo-norm
  \[
    \Z^n\to\R_{\ge 0},\quad(x_1,\ldots,x_n)\mapsto|x_1|+\cdots|x_n|.
  \]
  Let $A$ be a finitely generated abelian group. Fix an isomorphism $A/\mathrm{Tor}\cong\Z^n$. Then $A$ inherits a pseudo-norm via the composite
  \[
    A\to A/\mathrm{Tor}\cong\Z^n\to\R_{\ge 0},
  \]
  where the last map is the pseudo-norm above. It is straightforward to verify for any subset $S\subset A$, the boundedness of $S$ with respect to this pseudo-metric does not depend on the choice of the isomorphism $A/\mathrm{Tor}\cong\Z^n$. Hence, any finitely generated abelian group $A$ will be equipped with the pseudo-norm induced in this way. This convention is justified since we are concerned only with the boundedness of subsets of $A$.
\end{example}

Let $X$ be a metric space, and let $\underline{A}$ be a pseudo-normed abelian group. A \emph{uniform bounded $n$-cochain} on $X$ with coefficients in $\underline{A}$ is an $n$-cochain $u$ on $X$ with values in the underlying abelian group $A$ of $\underline{A}$ such that
\[
  \sup_{i\in I}|u(\sigma_i)|<\infty
\]
for any uniform family of singular $n$-simplices $\{\sigma_i\colon\Delta^n\to X\}_{i\in I}$. Let $C^n_\mathrm{ub}(X;\underline{A})$ denote the abelian group of all uniform bounded $n$-cochains on $X$ with coefficients in $\underline{A}$. Observe that if $\{\sigma_i\colon\Delta^n\to X\}_{i\in I}$ is a uniform family of singular $n$-simplices, then $\{\partial_0\sigma_i,\ldots,\partial_n\sigma_i\colon\Delta^{n-1}\to X\}_{i\in I}$ is also a uniform family of singular $(n-1)$-simplices, where $\partial_j$ denote the $j$-th face operator. Hence, for any uniform family $\{\sigma_i\colon\Delta^n\to X\}_{i\in I}$ and $u\in C^{n-1}_\mathrm{ub}(X;\underline{A})$, we have
\[
  \sup_{i\in I}|(\delta u)(\sigma_i)|=\sup_{i\in I}|u(\partial\sigma_i)|\le\sup_{i\in I}\sum_{j=0}^n|u(\partial_j\sigma_i)|<\infty,
\]
where $\delta$ and $\partial$ denote the coboundary and boundary operators, respectively. Therefore, $C^*_\mathrm{ub}(X;\underline{A})$ forms a cochain complex.

\begin{definition}
  The \emph{uniform bounded cohomology} of a metric space $X$ with coefficients in a pseudo-normed abelian group $\underline{A}$ is defined by
  \[
    H^*_\mathrm{ub}(X;\underline{A})=H^*(C^*_\mathrm{ub}(X;\underline{A})).
  \]
\end{definition}

We will abbreviate $H^*_\mathrm{ub}(X;\underline{\Z})$ simply as $H^*_\mathrm{ub}(X)$.


\subsection{Uniform homotopy invariance}

 The uniformness of families of singular simplices in a metric space is not preserved under arbitrary continuous maps. Consequently, unlike bounded cohomology, not every continuous map induces homomorphism in uniform bounded cohomology. To address this, we restrict our attention to a special class of maps that do induce well-defined maps in uniform bounded cohomology.

 \begin{definition}
   [Block and Weinberger \cite{BW}]
   A map $f\colon X\to Y$ between metric spaces is called \emph{effectively proper} if for every $r>0$, there exists $s>0$ such that if $d(f(x),f(y))<r$ for $x,y\in X$, then $d(x,y)<s$.
 \end{definition}

Block and Weinberger \cite{BW} introduced uniformly finite homology for metric spaces and showed that effectively proper bornologous maps (in their terminology, effectively proper Lipschitz maps) induce well-defined maps in uniformly finite homology. In the present setting, we will consider effectively proper uniformly continuous maps between metric spaces. 

\begin{example}
  \label{EPUC example}
  \begin{enumerate}
    \item The inclusion of a subspace of a metric space is an effectively proper and uniformly continuous.

    \item Every biLipschitz homeomorphism is an effectively proper uniformly continuous maps.

    \item If $f\colon X\to Y$ is a uniformly continuous map between metric spaces, then the map
    \[
      X\to X\times Y,\quad x\mapsto(x,f(x))
    \]
    is effectively proper and uniformly continuous.
  \end{enumerate}
\end{example}

\begin{lemma}
  \label{EPUC uniform}
  Let $f\colon X\to Y$ be an effectively proper uniformly continuous map between metric spaces. If $\{\sigma_i\colon\Delta^n\to X\}_{i\in I}$ is a uniform family of singular $n$-simplices, then $\{f\circ\sigma_i\colon\Delta^n\to Y\}_{i\in I}$ is also a uniform family of singular $n$-simplices.
\end{lemma}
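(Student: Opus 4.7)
The plan is to verify the two defining conditions of a uniform family for $\{f\circ\sigma_i\}$ separately, using uniform continuity of $f$ for equicontinuity and effective properness of $f$ for the local finiteness condition.

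For equicontinuity, I would argue directly: given $\epsilon > 0$, uniform continuity of $f$ gives $\epsilon' > 0$ such that $d(f(a), f(b)) < \epsilon$ whenever $d(a,b) < \epsilon'$ in $X$, and equicontinuity of $\{\sigma_i\}$ then gives $\delta > 0$ such that $d(\sigma_i(x),\sigma_i(y)) < \epsilon'$ for all $i\in I$ whenever $d(x,y) < \delta$ in $\Delta^n$. Composing yields $d(f\sigma_i(x), f\sigma_i(y)) < \epsilon$ uniformly in $i$.

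For the local finiteness condition, fix $r > 0$ and let $J_y = \{i\in I : (f\circ\sigma_i)(\Delta^n) \cap B_r(y) \ne \emptyset\}$ for $y \in Y$. If $J_y$ is empty we are done, so pick $i_0 \in J_y$ and a point $x_{i_0} \in \sigma_{i_0}(\Delta^n)$ with $d(f(x_{i_0}), y) < r$; similarly, for every $i \in J_y$ pick $x_i \in \sigma_i(\Delta^n)$ with $d(f(x_i),y) < r$. Then $d(f(x_i), f(x_{i_0})) < 2r$ for all $i \in J_y$, so effective properness, applied at radius $2r$, produces an $s > 0$ (depending only on $r$, not on $y$) with $d(x_i, x_{i_0}) < s$ for every $i \in J_y$. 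Hence each such $\sigma_i$ meets $B_s(x_{i_0})$, and the uniformness of $\{\sigma_i\}$ bounds $|J_y|$ by the constant $K_s$ associated with the family $\{\sigma_i\}$ at radius $s$. Setting $K'_r := K_s$ gives the required constant, independent of $y$.

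The argument is essentially routine once one sees the correct order of quantifiers; the only subtle point, and the one worth emphasizing, is that effective properness is used with a \emph{shifted} radius ($2r$ rather than $r$) so as to apply the triangle inequality to two points lying in the ball $B_r(y)$, and then the bound at the resulting radius $s$ must be pulled from the uniformness of $\{\sigma_i\}$ using the center $x_{i_0}$ which depends on $y$ but whose existence is all that matters. No further structure of the metric spaces is needed, so the lemma follows directly from the definitions.
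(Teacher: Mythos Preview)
Your proof is correct and is essentially identical to the paper's own argument: both verify equicontinuity via uniform continuity of $f$, then for the local finiteness condition pick a base point in $X$ mapping into $B_r(y)$, apply the triangle inequality to get $d(f(x_i),f(x_{i_0}))<2r$, use effective properness at radius $2r$ to obtain an $s$ independent of $y$, and bound $|J_y|$ by the constant $K_s$ from the uniformness of $\{\sigma_i\}$. The only cosmetic difference is that the paper takes an arbitrary preimage point $x$ with $d(f(x),y)<r$ rather than your $x_{i_0}\in\sigma_{i_0}(\Delta^n)$, which makes no difference.
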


\begin{proof}
  Since $f$ is uniformly continuous, the family $\{f\circ\sigma_i\}_{i\in I}$ is equicontinuous. Take any $r>0$ and $y\in Y$. Let
  \[
    I(y,r)=\{i\in I\mid B_r(y)\text{ intersects }f\circ\sigma_i\}.
  \]
  If $I(y,r)\ne\emptyset$, then there exists $x\in X$ such that $d(f(x),y)<r$, hence
  \[
    I(y,r)\subset I(f(x),2r).
  \]
  For every $i\in I(y,r)$, choose $x_i\in\sigma_i(\Delta^n)$ such that $d(y,f(x_i))<r$, so that $d(f(x),f(x_i))<2r$. Since $f$ is effectively proper, there exists $s>0$ such that $d(x,x_i)<s$ for all $i\in I(y,r)$. Note that $s$ only depends on $r$, which is independent of the choice of $y$ and $x$. Since the family $\{\sigma_i\}_{i\in I}$ is uniform, there exists $K_s>0$ such that $B_s(x)$ meets at most $K_s$ simplices $\sigma_i$. Consequently, $|I(y,r)|\le K_s$. If $I(y,r)=\emptyset$, then trivially $|I(y,r)|<K_s$ as well. Therefore, $\{f\circ\sigma_i\}_{i\in I}$ is uniform.
\end{proof}

\begin{proposition}
  \label{induced map}
  Let $f\colon X\to Y$ be an effectively proper uniformly continuous map between metric spaces. Then the induced map
  \[
    f^*\colon H^*_\mathrm{ub}(Y;\underline{A})\to H^*_\mathrm{ub}(X;\underline{A}),\quad[u]\mapsto[u\circ f]
  \]
  is well defined.
\end{proposition}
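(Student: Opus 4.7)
The plan is to verify that precomposition with $f$ at the level of singular simplices yields a well-defined chain map $f^\sharp\colon C^*_\mathrm{ub}(Y;\underline{A})\to C^*_\mathrm{ub}(X;\underline{A})$ sending $u$ to $u\circ f_*$, where $f_*$ is the operator on singular simplices defined by $f_*(\sigma)=f\circ\sigma$. Once this is established, passing to cohomology gives the desired map $f^*$.

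First I would check that $f^\sharp u$ is actually a uniform bounded cochain whenever $u$ is. Let $\{\sigma_i\colon\Delta^n\to X\}_{i\in I}$ be a uniform family of singular $n$-simplices in $X$. By Lemma \ref{EPUC uniform}, the composed family $\{f\circ\sigma_i\}_{i\in I}$ is a uniform family of singular $n$-simplices in $Y$, since $f$ is effectively proper and uniformly continuous. Because $u\in C^n_\mathrm{ub}(Y;\underline{A})$, we have
\[
  \sup_{i\in I}|(f^\sharp u)(\sigma_i)|=\sup_{i\in I}|u(f\circ\sigma_i)|<\infty,
\]
so $f^\sharp u\in C^n_\mathrm{ub}(X;\underline{A})$. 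Additivity of $f^\sharp$ is immediate from the additivity of $u$, so $f^\sharp\colon C^n_\mathrm{ub}(Y;\underline{A})\to C^n_\mathrm{ub}(X;\underline{A})$ is a well-defined group homomorphism.

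Next I would verify that $f^\sharp$ commutes with the coboundary operator $\delta$. This is the standard naturality computation: for any singular $(n+1)$-simplex $\sigma\colon\Delta^{n+1}\to X$,
\[
  (f^\sharp\delta u)(\sigma)=(\delta u)(f\circ\sigma)=u(\partial(f\circ\sigma))=u(f\circ\partial\sigma)=(f^\sharp u)(\partial\sigma)=(\delta f^\sharp u)(\sigma),
\]
using that $\partial_j$ commutes with postcomposition by $f$. Hence $f^\sharp$ is a cochain map and induces a homomorphism $f^*\colon H^*_\mathrm{ub}(Y;\underline{A})\to H^*_\mathrm{ub}(X;\underline{A})$, $[u]\mapsto[u\circ f_*]$.

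There is no real obstacle here: the only nontrivial content is the preservation of uniformity under postcomposition by $f$, and that has been isolated in Lemma \ref{EPUC uniform}. Everything else is formal, mirroring the usual construction of induced maps on singular cohomology, so the proof reduces to assembling these two ingredients.
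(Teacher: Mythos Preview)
Your proof is correct and follows exactly the same approach as the paper: invoke Lemma \ref{EPUC uniform} to see that $u\circ f$ is uniform bounded whenever $u$ is, and then observe that $f^\sharp$ is a cochain map, which induces the desired homomorphism on cohomology. The paper's proof is terser (it omits the explicit $\delta f^\sharp=f^\sharp\delta$ computation as standard), but the content is identical.
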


\begin{proof}
  By Lemma \ref{EPUC uniform}, if $u$ is a uniform bounded cochain on $Y$, then $u\circ f$ is a uniform bounded cochain on $X$. Thus, $f$ induces a cochain map \[
    f^\sharp\colon C^*_\mathrm{ub}(Y;\underline{A})\to C^*_\mathrm{ub}(X;\underline{A}),\quad u\mapsto u\circ f
  \]
  and consequently a homomorphism in cohomology, as claimed.
\end{proof}

Clearly, the identity map of a metric space is effectively proper and uniformly continuous, and the composition of effectively proper uniformly continuous maps is again effectively proper and uniformly continuous. Hence, metric spaces together with effectively proper uniformly continuous maps form a category, denoted by $\mathbf{Met}_\mathrm{EPUC}$. By Proposition \ref{induced map}, for any pseudo-normed abelian group $\underline{A}$, we obtain a functor
\[
  \mathbf{Met}_\mathrm{EPUC}^\mathrm{op}\to\mathbf{GrAb},\quad X\mapsto H_\mathrm{ub}^*(X;\underline{A}),
\]
where $\mathbf{GrAb}$ denotes the category of graded abelian groups.

Let $(X,Y)$ be a pair of metric spaces, that is, a metric space $X$ together with a subspace $Y\subset X$. Let $\underline{A}$ be a pseudo-normed abelian group. As noted in Example \ref{EPUC example}, the inclusion $Y\to X$ is effectively proper and uniformly continuous. Thus as in the proof of Proposition \ref{induced map}, it induces a cochain map $C_\mathrm{ub}^*(X;\underline{A})\to C_\mathrm{ub}^*(Y;\underline{A})$. We define the cochain complex $C_\mathrm{ub}^*(X,Y;\underline{A})$ by its kernel. Then we define the uniform bounded cohomology of the pair $(X,Y)$ by
\[
  H^*_\mathrm{ub}(X,Y;\underline{A})=H^*(C_\mathrm{ub}^*(X,Y;\underline{A})).
\]
By this construction, we immediately obtain the following.

\begin{theorem}
  \label{cohomology exact sequence}
  Let $(X,Y)$ be a pair of metric spaces. Then there is a long exact sequence
  \[
    \cdots\to H^n_\mathrm{ub}(X,Y;\underline{A})\to H^n_\mathrm{ub}(X;\underline{A})\to H^n_\mathrm{ub}(Y;\underline{A})\to H^{n+1}_\mathrm{ub}(X,Y;\underline{A})\to\cdots.
  \]
\end{theorem}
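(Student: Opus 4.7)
The plan is to obtain the long exact sequence from the standard snake lemma applied to the short exact sequence of cochain complexes
\[
  0 \to C_\mathrm{ub}^*(X,Y;\underline{A}) \to C_\mathrm{ub}^*(X;\underline{A}) \xrightarrow{i^\sharp} C_\mathrm{ub}^*(Y;\underline{A}) \to 0,
\]
where $i\colon Y\to X$ is the inclusion. Exactness at the left and middle is immediate from the definition of $C_\mathrm{ub}^*(X,Y;\underline{A})$ as $\ker i^\sharp$, so the only substantive thing to check is that the restriction map $i^\sharp$ is surjective.

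To prove surjectivity, I would build an explicit set-theoretic section. Given $u\in C_\mathrm{ub}^n(Y;\underline{A})$, define $\tilde u\in C^n(X;A)$ by
\[
  \tilde u(\sigma) =
  \begin{cases}
    u(\sigma) & \text{if } \sigma(\Delta^n)\subset Y, \\
    0 & \text{otherwise,}
  \end{cases}
\]
where in the first case $\sigma$ is regarded as a singular simplex in $Y$ with the induced metric. Clearly $i^\sharp(\tilde u)=u$, so it remains to verify that $\tilde u$ is a uniform bounded cochain on $X$.

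The key verification is that for any uniform family $\{\sigma_i\colon\Delta^n\to X\}_{i\in I}$, the subfamily $J=\{i\in I\mid \sigma_i(\Delta^n)\subset Y\}$ forms a uniform family in $Y$. Equicontinuity transfers verbatim since the metric on $Y$ is the restriction of the metric on $X$. For local finiteness, for any $r>0$ and $y\in Y$, the ball $B_r^Y(y)$ in $Y$ is contained in $B_r^X(y)$, so $B_r^Y(y)$ meets at most $K_r$ of the $\sigma_i$ with $i\in J$. Hence $\{\sigma_i\}_{i\in J}$ is uniform in $Y$, and the boundedness hypothesis on $u$ gives
\[
  \sup_{i\in I}|\tilde u(\sigma_i)| = \sup_{i\in J}|u(\sigma_i)|<\infty,
\]
so $\tilde u\in C_\mathrm{ub}^n(X;\underline{A})$.

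Once surjectivity is established, the zig-zag lemma for short exact sequences of cochain complexes yields the asserted long exact sequence. The only mildly delicate point is the uniformity check for $\{\sigma_i\}_{i\in J}$ in $Y$; everything else is formal. Note that $\tilde u$ need not be a cochain map section (it is not compatible with $\delta$ in general), but that is irrelevant since the snake lemma requires only a surjection of cochain complexes, which we have verified.
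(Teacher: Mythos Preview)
Your argument is correct and follows the same route as the paper, which simply declares the long exact sequence ``immediate'' from the definition of $C_\mathrm{ub}^*(X,Y;\underline{A})$ as the kernel of the restriction map. You have in fact supplied the one detail the paper suppresses, namely the surjectivity of $i^\sharp$ via extension by zero together with the check that a uniform family in $X$ restricts to a uniform family in $Y$.
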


We next establish the uniform homotopy invariance of uniform bounded cohomology.

\begin{definition}
  Let $f,g\colon X\to Y$ be maps between metric spaces. We say that $f$ is \emph{uniformly homotopic} to $g$ if there exists a uniformly continuous map $H\colon X\times[0,1]\to Y$, called a \emph{uniform homotopy}, such that $H(-,0)=f$ and $H(-,1)=g$.
\end{definition}

Note that if $f,g\colon X\to Y$ are uniformly homotopic, then both $f$ and $g$ are necessarily uniformly continuous.

\begin{remark}
  The above definition of a uniform homotopy differs from that of Calder and Siegel \cite{CS} who impose a weaker requirement: for every $\epsilon>0$, there exists $\delta>0$ such that for all $x\in X$, $d(H(x,s),H(x,t))<\epsilon$ whenever $|s-t|<\epsilon$.
\end{remark}

\begin{lemma}
  \label{EPUC homotopy}
  Let $f,g\colon X\to Y$ be uniformly homotopic maps between metric spaces. If $f$ is effectively proper, then any uniform homotopy from $f$ to $g$ is also effectively proper. In particular, $g$ is effectively proper whenever $f$ is.
\end{lemma}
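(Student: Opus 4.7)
The plan is to show directly that the homotopy $H\colon X\times[0,1]\to Y$ is effectively proper, and then derive the statement about $g$ by restricting to the slice $X\times\{1\}$.

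The key observation is that uniform continuity of $H$ combined with compactness of $[0,1]$ yields a constant $B\ge 0$ such that
\[
  d_Y(H(x,0),H(x,t))\le B\quad\text{for every }x\in X\text{ and every }t\in[0,1].
\]
To establish this, I would use the modulus of uniform continuity of $H$ to pick $\delta_0>0$ with $d_Y(H(p),H(q))<1$ whenever $d_{X\times[0,1]}(p,q)<\delta_0$, then subdivide $[0,t]$ into at most $N=\lceil 1/\delta_0\rceil$ subintervals of length $<\delta_0$ and chain via the triangle inequality along the segments $\{x\}\times[0,t]$. This yields $B=N$, independent of $x$. (Note that $\omega(1)$ itself need not be finite on an unbounded $X$, so the chaining step is essential; this is the one place where care is required.)

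Once $B$ is in hand, effective properness of $H$ follows from a straightforward application of the triangle inequality and effective properness of $f$. Indeed, suppose $d_Y(H(x,t),H(x',t'))<r$ for some $(x,t),(x',t')\in X\times[0,1]$. Then
\[
  d_Y(f(x),f(x'))\le d_Y(H(x,0),H(x,t))+d_Y(H(x,t),H(x',t'))+d_Y(H(x',t'),H(x',0))<2B+r.
\]
Since $f$ is effectively proper, there exists $s_0>0$, depending only on $r$, with $d_X(x,x')<s_0$. With the product metric on $X\times[0,1]$, the constant $s=s_0+1$ then bounds $d_{X\times[0,1]}((x,t),(x',t'))$.

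For the final clause, I would observe that the map $\iota\colon X\to X\times[0,1]$, $x\mapsto(x,1)$, is an isometric embedding, so $\iota$ is trivially effectively proper; and $g=H\circ\iota$, so $g$ is effectively proper as a composition of effectively proper maps. I do not anticipate any real obstacle beyond the uniform bound $B$: that step is exactly where ordinary uniform continuity would be insufficient, and where the compactness of $[0,1]$ is used decisively.
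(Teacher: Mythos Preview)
Your proposal is correct and follows essentially the same route as the paper's own proof: both use uniform continuity of $H$ to pick a $\delta$ with $d_Y(H(p),H(q))<1$ whenever $d(p,q)<\delta$, subdivide $[0,1]$ into $\sim 1/\delta$ pieces to get a uniform bound on $d_Y(H(x,0),H(x,t))$, apply the triangle inequality to bound $d_Y(f(x),f(x'))$ in terms of $d_Y(H(x,t),H(x',t'))$, and then invoke effective properness of $f$. Your explicit handling of the final clause via $g=H\circ\iota$ is slightly more careful than the paper's, which simply says the assertion follows immediately.
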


\begin{proof}
  Let $H\colon X\times[0,1]\to Y$ be a uniform homotopy from $f$ to $g$. Then there exists $\delta>0$ such that $d(H(x,s),H(y,t))<1$ whenever $\sqrt{d(x,y)^2+(s-t)^2}<\delta$. In particular, for any $x\in X$, we have $d(H(x,s),H(x,t))<1$ whenever $|s-t|<\delta$. Therefore,
  \begin{align*}
    d(H(x,0),H(x,s))&\le\sum_{i=1}^nd(H(x,\tfrac{(i-1)\delta}{2}),H(x,\tfrac{i\delta}{2}))+d(H(x,\tfrac{n\delta}{2}),H(x,s))\\
    &\le n+1\\
    &\le\frac{2}{\delta}+1
  \end{align*}
  where $n$ is the largest integer satisfying $0\le\tfrac{n\delta}{2}\le s$. Similarly, $d(H(x,t),H(x,1))\le\frac{2}{\delta}+1$. Hence, for any $x,y\in X$ and $s,t\in[0,1]$,
  \begin{align*}
    d(f(x),f(y))&\le d(H(x,0),H(x,s))+d(H(x,s),H(y,t))+d(H(y,t),H(y,0))\\
    &<d(H(x,s),H(y,t))+\frac{4}{\delta}+2.
  \end{align*}
  Suppose $f$ is effectively proper. Then there exists $s_0>0$ such that $d(H(x,s),H(y,t))<r$ implies $d(x,y)<s_0$. Since $\delta$ is fixed, we have $d((x,s),(y,t))<\sqrt{s_0^2+1}$, showing that $H$ is effectively proper. The assertion follows immediately.
\end{proof}

\begin{theorem}
  \label{homotopy invariance cohomology}
  Let $f,g\colon X\to Y$ be effectively proper uniformly continuous maps between metric spaces. If $f$ and $g$ are uniformly homotopic, then
  \[
    f^*=g^*\colon H^*_\mathrm{ub}(Y;\underline{A})\to H^*_\mathrm{ub}(X;\underline{A}).
  \]
\end{theorem}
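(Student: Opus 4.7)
The plan is to construct a chain-level cochain homotopy between $f^\sharp$ and $g^\sharp$ using the classical prism operator applied to a uniform homotopy. Let $H\colon X\times[0,1]\to Y$ be a uniform homotopy from $f$ to $g$; by Lemma \ref{EPUC homotopy}, $H$ is itself an effectively proper uniformly continuous map. Fix the standard decomposition of $\Delta^{n-1}\times[0,1]$ into $n$ affine $n$-simplices $P_0,\ldots,P_{n-1}\colon\Delta^n\to\Delta^{n-1}\times[0,1]$, and for $u\in C^n_\mathrm{ub}(Y;\underline{A})$ and a singular $(n-1)$-simplex $\sigma\colon\Delta^{n-1}\to X$ define
\[
  s(u)(\sigma)=\sum_{i=0}^{n-1}(-1)^i\, u\bigl(H\circ(\sigma\times\mathrm{id}_{[0,1]})\circ P_i\bigr).
\]

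The first task is to verify that $s$ takes $C^n_\mathrm{ub}(Y;\underline{A})$ into $C^{n-1}_\mathrm{ub}(X;\underline{A})$. Given a uniform family $\{\sigma_j\colon\Delta^{n-1}\to X\}_{j\in I}$, I would first show that for each fixed $i$ the family $\{(\sigma_j\times\mathrm{id})\circ P_i\colon\Delta^n\to X\times[0,1]\}_{j\in I}$ is uniform in the product metric space $X\times[0,1]$. Equicontinuity transfers since $P_i$ is a fixed Lipschitz map from a compact simplex and the family $\{\sigma_j\times\mathrm{id}\}_{j\in I}$ is equicontinuous. Local finiteness transfers because the image of $(\sigma_j\times\mathrm{id})\circ P_i$ lies in $\sigma_j(\Delta^{n-1})\times[0,1]$, so whenever a ball of radius $r$ around $(x,t)\in X\times[0,1]$ meets this image, the ball $B_r(x)$ meets $\sigma_j(\Delta^{n-1})$, which bounds the number of such $j$ by the local-finiteness constant $K_r$ for $\{\sigma_j\}$. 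Lemma \ref{EPUC uniform} applied to $H$ then yields that $\{H\circ(\sigma_j\times\mathrm{id})\circ P_i\}_{j\in I}$ is uniform in $Y$, so $\sup_j|u(H\circ(\sigma_j\times\mathrm{id})\circ P_i)|<\infty$ for each $i$, and the triangle inequality gives $\sup_j|s(u)(\sigma_j)|<\infty$.

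With $s$ shown to be well-defined on uniform bounded cochains, the identity $\delta s + s\delta = f^\sharp - g^\sharp$ is a purely combinatorial consequence of the standard prism identity: the boundary of the prism chain $\sum_i(-1)^i(\sigma\times\mathrm{id})\circ P_i$ consists of the top face $\sigma\times\{1\}$, the bottom face $\sigma\times\{0\}$, and the lateral contribution coming from $\partial\sigma\times[0,1]$, and the classical cancellation argument for singular cochains carries over unchanged. Composing with $H^\sharp$ yields the desired equality on cochains, and passing to cohomology gives $f^*=g^*$.

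The main obstacle is the well-definedness check above, where both hypotheses on $H$ are used in an essential way: uniform continuity propagates equicontinuity of a uniform family through $H$, while effective properness (supplied for free by Lemma \ref{EPUC homotopy}) is precisely what prevents $H$ from collapsing infinitely many simplices from the prism into a bounded region of $Y$ and so preserves local finiteness. Either failure would allow the prism operator to produce cochains that are unbounded on some uniform family, so both conditions enter irreducibly.
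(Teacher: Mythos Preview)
Your proposal is correct and follows essentially the same route as the paper: invoke Lemma~\ref{EPUC homotopy} to make $H$ effectively proper, use Lemma~\ref{EPUC uniform} to show the prism simplices of a uniform family remain uniform, and then apply the standard prism identity to obtain a cochain homotopy between $f^\sharp$ and $g^\sharp$. You have written out the well-definedness check in more detail than the paper (which simply gestures at the proof of Lemma~\ref{EPUC uniform}), but the argument is the same.
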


\begin{proof}
  Let $H\colon X\times[0,1]\to Y$ be a uniform homotopy between $f$ and $g$. It suffices to show that the induced cochain maps $f^\sharp,g^\sharp\colon C^*_\mathrm{ub}(Y;\underline{A})\to C^*_\mathrm{ub}(X;\underline{A})$ are chain homotopic. By Lemma \ref{EPUC homotopy}, the uniform homotopy $H$ is effectively proper. Arguing as in the proof of Lemma \ref{EPUC uniform}, one verifies that the prism operator associated with $H$ on the singular chain complex preserves uniform families of singular simplices. Consequently, the standard prism construction defines a chain homotopy $P\colon C^*_\mathrm{ub}(Y;\underline{A})\to C^{*-1}_\mathrm{ub}(X;\underline{A})$ satisfying
  \[
    g^\sharp-f^\sharp=\delta P+P\delta.
  \]
  Hence, $f^\sharp$ and $g^\sharp$ are chain homotopic.
\end{proof}

\subsection{Mayer-Vietoris sequence and excision}

Let $X$ be a metric space, and let $\underline{A}$ be a pseudo-normed abelian group. For $\epsilon>0$, denote by $C_{\mathrm{ub},\epsilon}^n(X;\underline{A})$ the abelian group of singular $n$-cochains that are bounded on all uniform families of singular $n$-simplices whose diameters are less than $\epsilon$. As in the definition of $C^*_\mathrm{ub}(X;\underline{A})$, one easily verifies that $C_{\mathrm{ub},\epsilon}^*(X;\underline{A})$ forms a cochain complex under the usual coboundary operator.

We now establish a technical but fundamental lemma showing that restricting attention to simplices of sufficiently small diameter does not alter the cohomology.

\begin{lemma}
  \label{C_epsilon}
  The restriction
  \[
    \iota\colon C^*_\mathrm{ub}(X;\underline{A})\to C_{\mathrm{ub},\epsilon}^*(X;\underline{A})
  \]
  is a chain homotopy equivalence.
\end{lemma}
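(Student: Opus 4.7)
The plan is to dualize the classical proof, via iterated barycentric subdivision, that the inclusion of small singular chains into all singular chains is a chain homotopy equivalence, checking throughout that every operator preserves the relevant uniform bounded conditions on cochains. First observe that every uniform family of simplices of diameter less than $\epsilon$ is in particular a uniform family, so that $\iota$ is nothing but the inclusion of the smaller subgroup $C^*_\mathrm{ub}(X;\underline{A})\subseteq C^*_{\mathrm{ub},\epsilon}(X;\underline{A})$ of ordinary singular cochains. Let $S$ be the barycentric subdivision operator on singular chains and $T$ the associated prism operator, satisfying $\partial T+T\partial=\mathrm{id}-S$. For each singular $n$-simplex $\sigma\colon\Delta^n\to X$, I would set
\[
  m(\sigma)=\min\bigl\{N\ge 0\bigm|\text{every simplex in }S^N\sigma\text{ has image of diameter }<\epsilon\bigr\}
\]
and $D(\sigma)=\sum_{k=0}^{m(\sigma)-1}TS^k\sigma$. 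A standard calculation then shows that $\rho:=\mathrm{id}-\partial D-D\partial$ is a chain map whose value on each $\sigma$ is a chain of simplices of diameter less than $\epsilon$, and that one has the identity
\[
  \rho(\sigma)=S^{m(\sigma)}\sigma+\sum_{j=0}^n(-1)^j\sum_{k=m(\partial_j\sigma)}^{m(\sigma)-1}TS^k\partial_j\sigma.
\]
Dualizing gives cochain operators $\rho^\sharp$ and $D^\sharp$ on singular cochains satisfying $\mathrm{id}-\rho^\sharp=\delta D^\sharp+D^\sharp\delta$, and the goal is to show that $\rho^\sharp$ restricts to a map $C^*_{\mathrm{ub},\epsilon}\to C^*_\mathrm{ub}$ and that $D^\sharp$ preserves both $C^*_\mathrm{ub}$ and $C^*_{\mathrm{ub},\epsilon}$.

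The principal technical input, and what I expect to be the main obstacle, is the following uniformity preservation claim: for any uniform family $\{\sigma_i\colon\Delta^n\to X\}_{i\in I}$, two things hold. \textbf{(i)} For every fixed $N$, the collection of singular simplices appearing in the chains $S^k\sigma_i$ and $TS^k\sigma_i$ for $0\le k\le N$ and $i\in I$ is again a uniform family. \textbf{(ii)} The integer $m(\sigma_i)$ is uniformly bounded in $i\in I$. For (i), the point is that $S$ and $T$ decompose $\Delta^n$, respectively $\Delta^n\times[0,1]$, into a fixed finite number of affine sub-simplices via Lipschitz affine maps, so equicontinuity is preserved with a controlled modulus; and every ball in $X$ meeting a new sub-simplex must meet the ambient $\sigma_i$, multiplying the local finiteness constant by a bounded combinatorial factor. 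For (ii), which is the crucial use of equicontinuity, I would fix a common modulus of continuity $\omega$ for $\{\sigma_i\}$, choose $\delta_0$ with $\omega(\delta_0)<\epsilon$, and pick $N_0$ depending only on $n$ and $\delta_0$ so that every simplex in the $N_0$-th subdivision of $\Delta^n$ has diameter $<\delta_0$; this yields $m(\sigma_i)\le N_0$ for all $i\in I$ simultaneously.

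Granted (i) and (ii), the remaining verifications are routine. For any uniform family $\{\sigma_i\}$ and any $u\in C^n_{\mathrm{ub},\epsilon}(X;\underline{A})$, the explicit expression for $\rho(\sigma_i)$ above writes it as a linear combination of at most a bounded (in $i$) number of simplices drawn from a uniform family of diameter less than $\epsilon$; hence $u(\rho(\sigma_i))$ is bounded and $\rho^\sharp u\in C^n_\mathrm{ub}(X;\underline{A})$. For $u\in C^n_{\mathrm{ub},\epsilon}(X;\underline{A})$ the cochain $D^\sharp u$ vanishes on every simplex of diameter less than $\epsilon$ (since $m(\sigma)=0$ then), hence trivially lies in $C^{n-1}_{\mathrm{ub},\epsilon}(X;\underline{A})$; and for $u\in C^n_\mathrm{ub}(X;\underline{A})$, applying (i) and (ii) directly to $D(\sigma_i)=\sum_{k=0}^{m(\sigma_i)-1}TS^k\sigma_i$ shows that $D^\sharp u\in C^{n-1}_\mathrm{ub}(X;\underline{A})$. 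The chain homotopy identity then delivers $\iota\rho^\sharp\simeq\mathrm{id}$ on $C^*_{\mathrm{ub},\epsilon}(X;\underline{A})$ and $\rho^\sharp\iota\simeq\mathrm{id}$ on $C^*_\mathrm{ub}(X;\underline{A})$, exhibiting $\iota$ as a chain homotopy equivalence.
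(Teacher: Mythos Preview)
The proposal is correct and follows essentially the same approach as the paper: use equicontinuity of a uniform family to bound the barycentric-subdivision depth $m(\sigma_i)$ uniformly in $i$, verify that the subdivision and prism operators preserve uniformity, and dualize Hatcher's Proposition~2.21 chain homotopy. Your write-up is in fact more careful than the paper's own proof, which defines $\rho(u)(\sigma_i)=u(S^{m_i}\sigma_i)$ without the correction terms and asserts $\iota\circ\rho=1$, glossing over precisely the issue (that $m$ may drop on faces) that your explicit formula for $\rho(\sigma)$ resolves.
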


\begin{proof}
  Let $\{\sigma_i\colon\Delta^n\to X\}_{i\in I}$ be a uniform family of singular $n$-simplices. Since the family is equicontinuous, there exists $\delta>0$ such that for any subset $A\subset\Delta^n$ with $\diam(A)<\delta$, the image $\sigma_i(A)$ has diameter $<\epsilon$ for all $i\in I$. Since $\Delta^n$ is compact, for every $i\in I$ there exists an integer $m_i\ge 0$ such that every $n$-simplex appearing in the $m$-th barycentric subdivision of $\sigma$ has diameter $<\delta$. Denote by $m_i$ the smallest such integer. By equicontinuity, the set $\max\{m_i\mid i\in I\}$ is bounded. Hence, the family consisting of all $n$-simplices arising from the $m_i$-th barycentric subdivision of $\sigma_i$ for all $i\in I$ forms a uniform family. Let $S^{m_i}(\sigma_i)$ denote the sum of all $n$-simplices in the $m_i$-th barycentric subdivision of the $n$-simplex $\sigma_i$. It satisfies
  \[
    \partial S^{m_i}(\sigma_i)=S^{m_i}(\partial\sigma_i),
  \]
  where $S^{m_i}$ on the right denotes the linear extension of $S^{m_i}$ to chains. This yields a cochain map
  \[
    \rho\colon C_{\mathrm{ub},\epsilon}^*(X;\underline{A})\to C_\mathrm{ub}^*(X;\underline{A}),\quad(\rho(u))(\sigma_i)=u(S^{m_i}(\sigma_i))
  \]
  for $u\in C_{\mathrm{ub},\epsilon}^*(X;\underline{A})$. By construction, $\iota\circ\rho=1$. Moreover, by an argument analogous to that of \cite[Proposition 2.21]{H}, one verifies that $\rho\circ\iota$ is chain homotopic to the identity map of $C^*_\mathrm{ub}(X;\underline{A})$. Hence $\iota$ is a chain homotopy equivalence.
\end{proof}

Let $\mathcal{U}=\{U_i\}_{i\in I}$ be a cover of a metric space $X$. We say that $\mathcal{U}$ has a \emph{Lebesgue number} $\epsilon>0$ if any subset of $X$ with diameter less than $\epsilon$ is contained in some $U_i$. In the proof of the Mayer-Vietoris sequence and excision for singular cohomology, the barycentric subdivision plays the central role. There, the number of subdivisions applied to a singular simplex depends on the simplex itself. In the present uniform setting, this procedure may fail to preserve uniform families of simplices. Therefore, to establish the Mayer-Vietoris sequence and excision results for uniform bounded cohomology, we will impose an additional condition on the Lebesgue number of the covering $\mathcal{U}=\{U_i\}_{i\in I}$.

\begin{theorem}
  \label{Mayer-Vietoris cohomology}
  Let $X=U\cup V$ be a cover of a metric space $X$ with Lebesgue number $\epsilon>0$. Then there is a long exact sequence
  \begin{multline*}
    \cdots\to H_\mathrm{ub}^n(X;\underline{A})\xrightarrow{(i_U^*,i_V^*)}H_\mathrm{ub}^n(U;\underline{A})\oplus H_\mathrm{ub}^n(V;\underline{A})\\
    \xrightarrow{j_U^*-j_V^*}H_\mathrm{ub}^n(U\cap V;\underline{A})\to H_\mathrm{ub}^{n+1}(X;\underline{A})\to\cdots
  \end{multline*}
  where $i_U\colon U\to X$, $i_V\colon V\to X$, $j_U\colon U\cap V\to U$ and $j_U\colon U\cap V\to V$ are inclusions.
\end{theorem}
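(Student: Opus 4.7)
The plan is to mimic the standard Mayer--Vietoris argument for singular cohomology, using Lemma \ref{C_epsilon} to pass to the small-diameter cochain complex $C^*_{\mathrm{ub},\epsilon}$. The essential geometric input is the Lebesgue number condition: every singular $n$-simplex $\sigma\colon\Delta^n\to X$ with $\diam\sigma(\Delta^n)<\epsilon$ has image entirely contained in $U$ or in $V$. Since Lemma \ref{C_epsilon} applies to each of $X$, $U$, $V$, and $U\cap V$, it suffices to produce a long exact sequence for the cohomology of $C^*_{\mathrm{ub},\epsilon}(-;\underline{A})$, with maps induced by the relevant inclusions.

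I would obtain such a long exact sequence from the short exact sequence of cochain complexes
\begin{equation*}
  0\to C^n_{\mathrm{ub},\epsilon}(X;\underline{A})\xrightarrow{(i^*_U,\,i^*_V)}C^n_{\mathrm{ub},\epsilon}(U;\underline{A})\oplus C^n_{\mathrm{ub},\epsilon}(V;\underline{A})\xrightarrow{j^*_U-j^*_V}C^n_{\mathrm{ub},\epsilon}(U\cap V;\underline{A})\to 0.
\end{equation*}
Injectivity of $(i^*_U,i^*_V)$ is immediate: a cochain vanishing on small simplices of both $U$ and $V$ vanishes on every small simplex of $X$ by the Lebesgue number condition. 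For exactness at the middle term, given $(v,w)$ with $v|_{U\cap V}=w|_{U\cap V}$, I would define a cochain $u$ on small simplices of $X$ by $u(\sigma):=v(\sigma)$ if $\sigma(\Delta^n)\subset U$ and $u(\sigma):=w(\sigma)$ otherwise (the Lebesgue number condition forces $\sigma$ into $V$ in the latter case, and compatibility on $U\cap V$ ensures well-definedness). For surjectivity of $j^*_U-j^*_V$, given $\alpha\in C^n_{\mathrm{ub},\epsilon}(U\cap V;\underline{A})$ I would extend $\alpha$ by zero to a cochain $\widetilde{\alpha}$ on small simplices of $V$ and take the pair $(0,-\widetilde{\alpha})$.

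The main technical step, and the part I expect to be the chief obstacle, is checking that these pieced-together or extended cochains remain uniformly bounded. This rests on the observation that whenever $\{\sigma_i\}_{i\in I}$ is a uniform family in a metric space $Z$ and $Z'\subset Z$ is any subspace, the subfamily $\{\sigma_i:\sigma_i(\Delta^n)\subset Z'\}$ is itself a uniform family in $Z'$: equicontinuity is inherited trivially, and the local-finiteness constant transfers because each ball in $Z'$ is contained in the corresponding ball in $Z$ (while conversely, if a ball in $Z$ meets a simplex in $Z'$, one can enlarge it by a factor of two about a point of $Z'$). This lets me split a uniform $X$-family into its $U$-part and its complement (which lies in $V$) to bound the glued cochain $u$, and lets me split a uniform $V$-family into its $U\cap V$-part and its complement to bound $\widetilde{\alpha}$. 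Finally, since the chain homotopy equivalences of Lemma \ref{C_epsilon} are natural with respect to isometric subspace inclusions, the long exact sequence derived from the displayed short exact sequence is precisely the stated Mayer--Vietoris sequence.
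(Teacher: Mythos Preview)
Your proposal is correct and follows exactly the paper's approach: pass to the small-diameter cochain complex $C^*_{\mathrm{ub},\epsilon}$ via Lemma~\ref{C_epsilon}, establish the displayed short exact sequence there using the Lebesgue number condition, and invoke naturality of the chain homotopy equivalence to identify the resulting long exact sequence with the claimed one. The paper's own proof simply asserts the short exact sequence without further comment; your verification of exactness at each term and, in particular, your observation that a subfamily of a uniform family lying in a subspace is again uniform in that subspace, are precisely the details one needs to justify that the glued and extended-by-zero cochains remain in $C^*_{\mathrm{ub},\epsilon}$.
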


\begin{proof}
  Since the cover $X=U\cup V$ admits a Lebesgue number $\epsilon>0$, every singular simplex of diameter less than $\epsilon$ is contained entirely in either $U$ or $V$. Hence we obtain a short exact sequence of cochain complexes;
  \[
    0\to C_{\mathrm{ub},\epsilon}^*(X;\underline{A})\xrightarrow{(i_U^\sharp,i_V^\sharp)}C_{\mathrm{ub},\epsilon}^*(U;\underline{A})\oplus C_{\mathrm{ub},\epsilon}^*(V;\underline{A})\xrightarrow{j_U^\sharp-j_V^\sharp}C_{\mathrm{ub},\epsilon}^*(U\cap V;\underline{A})\to 0.
  \]
  Note that the inclusion $C_{\mathrm{ub},\epsilon}^*(X;\underline{A})\to C_\mathrm{ub}^*(X;\underline{A})$ is natural with respect to the metric space $X$. Therefore, by Lemma \ref{C_epsilon}, we obtain an analogous short exact sequence for $C_\mathrm{ub}^*$, and the desired long exact sequence in cohomology follows.
\end{proof}

\begin{theorem}
  \label{excision cohomology}
  Let $(X,Y)$ be a pair of metric spaces. Suppose that $Z\subset Y$ satisfies $B_\epsilon(Z)\subset Y$ for some $\epsilon>0$. Then the natural map
  \[
    H^*_\mathrm{ub}(X,Y;\underline{A})\to H^*_\mathrm{ub}(X-Z,Y-Z;\underline{A})
  \]
  is an isomorphism.
\end{theorem}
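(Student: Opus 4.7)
The approach parallels the classical proof of excision in singular cohomology, with $C^*_{\mathrm{ub},\epsilon}$ playing the role of small-chain cochains. The crucial geometric input, coming from the hypothesis $B_\epsilon(Z) \subset Y$, is that any singular simplex of diameter less than $\epsilon$ either misses $Z$ entirely (hence lies in $X - Z$) or meets $Z$ and is then contained in $B_\epsilon(Z) \subset Y$. Thus every small simplex in $X$ is either in $X - Z$ or in $Y$.

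First I would extend Lemma \ref{C_epsilon} to pairs. Define $C^*_{\mathrm{ub},\epsilon}(W,W';\underline{A})$ as the kernel of restriction $C^*_{\mathrm{ub},\epsilon}(W;\underline{A}) \to C^*_{\mathrm{ub},\epsilon}(W';\underline{A})$ for any pair $(W,W')$. The restriction $C^*_\mathrm{ub}(W;\underline{A}) \to C^*_\mathrm{ub}(W';\underline{A})$ is surjective: a uniform bounded cochain on $W'$ extends by zero to all of $W$ and remains uniform bounded, since the subfamily of a uniform family in $W$ consisting of simplices contained in $W'$ is uniform in $W'$. The same holds at the $\epsilon$ level. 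From the two resulting short exact sequences of cochain complexes and the five-lemma applied to their long exact cohomology sequences (using Lemma \ref{C_epsilon} for $W$ and $W'$), the natural map $C^*_\mathrm{ub}(W,W';\underline{A}) \to C^*_{\mathrm{ub},\epsilon}(W,W';\underline{A})$ is a quasi-isomorphism. Applying this to both $(X,Y)$ and $(X-Z,Y-Z)$, the problem reduces to showing that the restriction $C^*_{\mathrm{ub},\epsilon}(X,Y;\underline{A}) \to C^*_{\mathrm{ub},\epsilon}(X-Z,Y-Z;\underline{A})$ is an isomorphism of cochain complexes.

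For this, a cochain in $C^*_{\mathrm{ub},\epsilon}(X,Y;\underline{A})$ vanishes on all small simplices contained in $Y$, and by the geometric dichotomy it is therefore completely determined by its values on small simplices in $X - Z$ not contained in $Y - Z$ — precisely the data of a cochain in $C^*_{\mathrm{ub},\epsilon}(X-Z,Y-Z;\underline{A})$. Restriction is thus a bijection on underlying groups that commutes with the coboundary. The main obstacle is verifying that the uniform boundedness condition is preserved in both directions. The nontrivial direction is that a uniform family of simplices of diameter less than $\epsilon$ in $X - Z$ remains uniform when regarded in $X$; this follows from a ball-doubling argument, for if a ball $B_r(x) \subset X$ with $x \in Z$ meets a family member, then that member contains a point $y \in X - Z$ with $d(x,y) < r$, and every other family member meeting $B_r(x)$ lies within distance $2r$ of $y$ in $X - Z$, so the count is controlled by the uniformness in $X - Z$. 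The reverse direction is immediate. Combining this cochain-level isomorphism with the two quasi-isomorphisms established above yields the desired excision isomorphism in cohomology.
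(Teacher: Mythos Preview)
Your proposal is correct and follows essentially the same route as the paper: set $U=X-Z$, $V=Y$, observe that the cover $X=U\cup V$ has Lebesgue number $\epsilon$ so that every small simplex lies in $U$ or in $V$, and then use Lemma~\ref{C_epsilon} to pass between $C^*_\mathrm{ub}$ and $C^*_{\mathrm{ub},\epsilon}$. The paper's proof is terser---it simply asserts the vertical maps in the relevant square are chain homotopy equivalences ``by Lemma~\ref{C_epsilon}'' and that the kernels at the $\epsilon$-level agree---whereas you spell out the five-lemma reduction to pairs and the ball-doubling argument showing a uniform family in $X-Z$ remains uniform in $X$; these are exactly the details the paper leaves implicit.
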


\begin{proof}
  Set $U=X-Z$ and $V=Y$. Then $X=U\cup V$ is a cover with Lebesgue number $\epsilon>0$. Consequently,  $C^*_{\mathrm{ub},\epsilon}(X;\underline{A})=C^*_{\mathrm{ub},\epsilon}(U;\underline{A})+C^*_{\mathrm{ub},\epsilon}(V;\underline{A})$. Hence the natural map
  \[
    \mathrm{Ker}\{C_{\mathrm{ub},\epsilon}^*(X;\underline{A})\to C_{\mathrm{ub},\epsilon}^*(Y;\underline{A})\}\to\mathrm{Ker}\{C_{\mathrm{ub},\epsilon}^*(U;\underline{A})\to C_{\mathrm{ub},\epsilon}^*(U\cap V;\underline{A})\}
  \]
  is an isomorphism. Moreover, we have a commutative diagram
  \[
    \xymatrix{
      C_\mathrm{ub}^*(X,Y;\underline{A})\ar[r]\ar[d]&C_\mathrm{ub}^*(U,U\cap V;\underline{A})\ar[d]\\
      \mathrm{Ker}\{C_{\mathrm{ub},\epsilon}^*(X;\underline{A})\to C_{\mathrm{ub},\epsilon}^*(Y;\underline{A})\}\ar[r]&\mathrm{Ker}\{C_{\mathrm{ub},\epsilon}^*(U;\underline{A})\to C_{\mathrm{ub},\epsilon}^*(U\cap V;\underline{A})\}.
    }
  \]
  By Lemma \ref{C_epsilon}, both vertical maps are chain homotopy equivalences. Therefore, the top horizontal map induces an isomorphism in cohomology, proving the statement.
\end{proof}


\section{Uniform bounded homology}\label{Uniform bounded homology}

In this section, we introduce uniform bounded homology for metric spaces and observe that it satisfies properties analogous to those of uniform bounded cohomology. We also examine the $0$-th uniform bounded homology in detail.

\subsection{Uniform bounded homology}

Let $X$ be a metric space, and let $\underline{A}$ be a pseudo-normed abelian group. A (possibly infinite) formal linear combination
\[
  \sum_{i\in I}a_i\sigma_i
\]
with coefficients $a_i\in A$ and singular $n$-simplices $\sigma_i$ in $X$, is called a \emph{uniform bounded $n$-chain} if the family $\{\sigma_i\colon\Delta^n\to X\}_{i\in I}$ is uniform and
\[
  \sup_{i\in I}|a_i|<\infty.
\]
Let $C_n^\mathrm{ub}(X;\underline{A})$ be the abelian group of all uniform bounded $n$-chains in $X$ with coefficients in $\underline{A}$. Quite similarly to $C^*_\mathrm{ub}(X;\underline{A})$, one verifies that $C_*^\mathrm{ub}(X;\underline{A})$ is a chain complex with boundary operator
\[
  \partial\left(\sum_{i\in I}a_i\sigma_i\right)=\sum_{i\in I}a_i\partial\sigma_i.
\]

\begin{definition}
  The \emph{uniform bounded homology} of a metric space $X$ with coefficients in a pseudo-normed abelian group $\underline{A}$ is defined by
  \[
    H_*^\mathrm{ub}(X;\underline{A})=H_*(C_*^\mathrm{ub}(X;\underline{A})).
  \]
\end{definition}

We abbreviate $H_*^\mathrm{ub}(X;\underline{\Z})$ by $H_*^\mathrm{ub}(X)$. Here, $H_*^\mathrm{ub}(X)$ coincides with the uniformly locally finite homology of $X$ introduced by Engel \cite{E}. We intentionally adopt a different name in order to emphasize that this theory is the natural dual of uniform bounded cohomology (see Section \ref{Poincare duality section} for details).

By Lemma \ref{EPUC uniform}, we obtain the following.

\begin{proposition}
  If $f\colon X\to Y$ is an effectively proper uniformly continuous map between metric spaces, then the induced map
  \[
    f_*\colon H_*^\mathrm{ub}(X;\underline{A})\to H_*^\mathrm{ub}(Y;\underline{A}),\quad\left[\sum_{i\in I}a_i\sigma_i\right]\mapsto\left[\sum_{i\in I}a_i(f\circ\sigma_i)\right]
  \]
  is well defined.
\end{proposition}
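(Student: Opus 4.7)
The plan is to produce $f_*$ in the usual two-step fashion: first construct a chain-level map $f_\sharp \colon C_*^\mathrm{ub}(X;\underline{A}) \to C_*^\mathrm{ub}(Y;\underline{A})$ by the formula $\sum_{i\in I} a_i \sigma_i \mapsto \sum_{i\in I} a_i (f\circ\sigma_i)$, and then verify that $f_\sharp$ is a chain map, so that it descends to the claimed homomorphism on uniform bounded homology. This parallels the cochain-level construction used in the proof of Proposition \ref{induced map}.

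The key point is to check that $f_\sharp$ actually takes values in $C_*^\mathrm{ub}(Y;\underline{A})$. Given a uniform bounded $n$-chain $\sum_{i\in I} a_i \sigma_i$ on $X$, the family $\{\sigma_i\colon\Delta^n\to X\}_{i\in I}$ is uniform and $\sup_{i\in I}|a_i|<\infty$. By Lemma \ref{EPUC uniform}, the pushforward family $\{f\circ\sigma_i\colon\Delta^n\to Y\}_{i\in I}$, indexed by the same $I$, is again uniform; the coefficient bound is trivially preserved since the same $a_i$ are used. Hence $\sum_{i\in I} a_i (f\circ\sigma_i)$ is a uniform bounded $n$-chain on $Y$. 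Note that we keep the indexing by $I$ (rather than collapsing possibly coincident simplices), so uniformness is inherited as an indexed family and there is no issue with the supremum of $|a_i|$.

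The chain map property follows from the standard term-wise identity $\partial(f\circ\sigma) = \sum_{j=0}^n (-1)^j f\circ\partial_j\sigma = f\circ\partial\sigma$, applied linearly to each summand; this is precisely the computation that makes the singular pushforward a chain map in ordinary singular homology, and it carries over verbatim since it is purely formal. Consequently $f_\sharp$ induces a homomorphism $f_* \colon H_*^\mathrm{ub}(X;\underline{A}) \to H_*^\mathrm{ub}(Y;\underline{A})$ given by the stated formula.

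There is no substantial obstacle: Lemma \ref{EPUC uniform} has already done the analytic work of showing that effectively proper uniformly continuous maps preserve uniformness of simplex families, and everything else is a formal verification. The only place to exercise a little care is the bookkeeping of the indexing set $I$ when passing from $\{\sigma_i\}$ to $\{f\circ\sigma_i\}$, as noted above.
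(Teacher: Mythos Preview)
Your proposal is correct and follows essentially the same approach as the paper: the paper simply states that the proposition follows from Lemma \ref{EPUC uniform}, and your argument spells out exactly this---use Lemma \ref{EPUC uniform} to see that $f_\sharp$ lands in $C_*^\mathrm{ub}(Y;\underline{A})$, then observe that the chain-map identity is formal.
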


Let $(X,Y)$ be a pair of metric spaces. We define
\[
  C_*^\mathrm{ub}(X,Y;\underline{A})=C_*^\mathrm{ub}(X;\underline{A})/C_*^\mathrm{ub}(Y;\underline{A}).
\]
Then $C_*^\mathrm{ub}(X,Y;\underline{A})$ is a chain complex, and we define the uniform bounded homology of $(X,Y)$ by
\[
  H_*^\mathrm{ub}(X,Y;\underline{A})=H_*(C_*^\mathrm{ub}(X,Y;\underline{A}))
\]

\begin{theorem}
  \label{homology exact sequence}
  Let $(X,Y)$ be a pair of metric spaces, and let $\underline{A}$ be a pseudo-normed abelian group. Then there is a long exact sequence
  \[
    \cdots\to H_n^\mathrm{ub}(Y;\underline{A})\to H_n^\mathrm{ub}(X;\underline{A})\to H_n^\mathrm{ub}(X,Y;\underline{A})\to H_{n-1}^\mathrm{ub}(Y;\underline{A})\to\cdots.
  \]
\end{theorem}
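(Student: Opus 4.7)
The plan is to mimic the argument that gave Theorem \ref{cohomology exact sequence}, but dualized: rather than constructing $C^*_\mathrm{ub}(X,Y;\underline{A})$ as a kernel, the relative chain complex $C_*^\mathrm{ub}(X,Y;\underline{A})$ has been built as a quotient, so the natural thing to do is to exhibit a short exact sequence of chain complexes
\[
0 \to C_*^\mathrm{ub}(Y;\underline{A}) \xrightarrow{i_\sharp} C_*^\mathrm{ub}(X;\underline{A}) \xrightarrow{q_\sharp} C_*^\mathrm{ub}(X,Y;\underline{A}) \to 0
\]
and then apply the zig-zag lemma.

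First, I would verify that the map $i_\sharp$ induced by the inclusion $i\colon Y \hookrightarrow X$ is well defined and injective. By Example \ref{EPUC example}(1), $i$ is effectively proper and uniformly continuous, so Lemma \ref{EPUC uniform} shows that if $\{\sigma_i\colon \Delta^n \to Y\}_{i \in I}$ is a uniform family in $Y$ then $\{i \circ \sigma_i\}_{i \in I}$ is a uniform family in $X$, with the same coefficient bound. Thus $i_\sharp$ sends $C_*^\mathrm{ub}(Y;\underline{A})$ into $C_*^\mathrm{ub}(X;\underline{A})$ as a subcomplex. Injectivity is clear: the formal sum in $X$ recovers the formal sum in $Y$ because distinct singular simplices $\sigma,\sigma' \colon \Delta^n \to Y$ remain distinct as singular simplices in $X$, so no cancellation can occur.

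The surjection $q_\sharp$ onto the quotient is tautological from the definition $C_*^\mathrm{ub}(X,Y;\underline{A}) = C_*^\mathrm{ub}(X;\underline{A})/C_*^\mathrm{ub}(Y;\underline{A})$, and exactness in the middle is likewise immediate. The boundary operator on $C_*^\mathrm{ub}(X,Y;\underline{A})$ was defined so that $q_\sharp$ is a chain map, and $i_\sharp$ is a chain map because the boundary of a uniform bounded chain in $Y$, computed in $Y$, coincides with its boundary computed in $X$.

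The only potentially nontrivial point — and hence the one step I would single out as needing care — is that the quotient $C_*^\mathrm{ub}(X,Y;\underline{A})$ is itself identifiable, in each degree, as an honest subgroup of a group of ``chains in $X$ modulo chains in $Y$''; concretely, one must check that if $c \in C_n^\mathrm{ub}(X;\underline{A})$ lies in $C_n^\mathrm{ub}(Y;\underline{A})$ after restriction to simplices whose image lies in $Y$, then the residual part is again a uniform bounded chain. But this is built into the definition of the quotient: the coefficient-boundedness and local-finiteness conditions are inherited by any subsum, so discarding the $Y$-supported terms of $c$ yields a well-defined element of $C_n^\mathrm{ub}(X;\underline{A})$ whose class in the quotient is $q_\sharp(c)$. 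With the short exact sequence of chain complexes in hand, the standard zig-zag lemma of homological algebra produces the required long exact sequence in homology, with connecting homomorphism $H_n^\mathrm{ub}(X,Y;\underline{A}) \to H_{n-1}^\mathrm{ub}(Y;\underline{A})$ defined in the usual way.
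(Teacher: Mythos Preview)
Your proposal is correct and follows precisely the same approach as the paper: exhibit the short exact sequence $0\to C_*^\mathrm{ub}(Y;\underline{A})\to C_*^\mathrm{ub}(X;\underline{A})\to C_*^\mathrm{ub}(X,Y;\underline{A})\to 0$ and invoke the zig-zag lemma. The paper's proof is a single sentence to this effect, whereas you have (harmlessly) spelled out the verifications of injectivity, surjectivity, and the chain-map property in more detail.
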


\begin{proof}
  By definition, there is a short exact sequence of chain complexes
  \[
    0\to C_*^\mathrm{ub}(Y;\underline{A}) \to C_*^\mathrm{ub}(X;\underline{A})\to C_*^\mathrm{ub}(X,Y;\underline{A})\to 0.
  \]
  The desired long exact sequence in homology follows immediately.
\end{proof}

The proof of Theorem \ref{homotopy invariance cohomology} applies verbatim to uniform bounded homology, yielding the following.

\begin{theorem}
  \label{homotopy invariance homology}
  Let $f,g\colon X\to Y$ be effectively proper uniformly continuous maps between metric spaces. If $f$ and $g$ are uniformly homotopic, then
  \[
    f_*=g_*\colon H^*_\mathrm{ub}(X;\underline{A})\to H^*_\mathrm{ub}(Y;\underline{A}).
  \]
\end{theorem}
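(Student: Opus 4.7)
The plan is to mimic the proof of Theorem \ref{homotopy invariance cohomology} but at the chain level, replacing the dual prism cochain homotopy by the standard singular prism chain homotopy. Given a uniform homotopy $H\colon X\times[0,1]\to Y$ from $f$ to $g$, I would construct a homomorphism
\[
  P\colon C_n^\mathrm{ub}(X;\underline{A})\to C_{n+1}^\mathrm{ub}(Y;\underline{A})
\]
satisfying $\partial P+P\partial=g_\sharp-f_\sharp$ on $C_*^\mathrm{ub}(X;\underline{A})$, from which $f_*=g_*$ follows immediately. By Lemma \ref{EPUC homotopy}, $H$ is effectively proper and uniformly continuous, so it has a chance of behaving well with respect to uniform families.

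The core task is to show that the classical prism operator preserves the uniform bounded structure. Decompose $\Delta^n\times[0,1]$ into $(n+1)$-simplices via the standard affine inclusions $\iota_j\colon\Delta^{n+1}\to\Delta^n\times[0,1]$, $j=0,\ldots,n$, and set
\[
  P(\sigma)=\sum_{j=0}^n(-1)^j\, H\circ(\sigma\times\mathrm{id}_{[0,1]})\circ\iota_j
\]
for a singular $n$-simplex $\sigma$ in $X$, extended linearly. I would first check that if $\{\sigma_i\colon\Delta^n\to X\}_{i\in I}$ is uniform, then so is $\{\sigma_i\times\mathrm{id}_{[0,1]}\colon\Delta^n\times[0,1]\to X\times[0,1]\}_{i\in I}$: equicontinuity is immediate, and the local finiteness condition holds because the projection $X\times[0,1]\to X$ takes a ball of radius $r$ into a ball of radius $r$, so at most $K_r$ of the product simplices can meet any ball of radius $r$. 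Composing with the fixed isometric embeddings $\iota_j$ yields a finite collection of uniform families in $X\times[0,1]$, and by Lemma \ref{EPUC uniform} applied to the effectively proper uniformly continuous map $H$, each family $\{H\circ(\sigma_i\times\mathrm{id})\circ\iota_j\}_{i\in I}$ is uniform in $Y$. Their (finite) union indexed by $i\in I$ and $j\in\{0,\ldots,n\}$ is therefore uniform as well.

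For the coefficient bound, a uniform bounded chain $\sum_{i\in I}a_i\sigma_i$ satisfies $\sup_i|a_i|<\infty$, and each summand in $P(\sum a_i\sigma_i)$ has coefficient $\pm a_i$, so the resulting chain is uniform bounded in $Y$. Hence $P$ is a well-defined map of abelian groups in the uniform bounded setting. The algebraic identity $\partial P+P\partial=g_\sharp-f_\sharp$ is an identity of operators on singular chains, depending only on the combinatorics of the prism decomposition together with $H(-,0)=f$ and $H(-,1)=g$, and so passes unchanged to $C_*^\mathrm{ub}$. Taking homology finishes the proof.

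The only real obstacle is the verification that the prism construction preserves uniformness of families, since that is where the metric hypotheses enter; but this reduces to the two observations above (product with $\mathrm{id}_{[0,1]}$ and post-composition with $H$ preserve uniformness), both of which are already available from Lemma \ref{EPUC uniform} and Lemma \ref{EPUC homotopy}. Everything else is standard singular chain-level manipulation, which is presumably why the authors write that the argument is \emph{verbatim}.
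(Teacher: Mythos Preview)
Your proposal is correct and follows exactly the approach the paper intends: the paper simply states that the proof of Theorem~\ref{homotopy invariance cohomology} applies verbatim, and your chain-level prism argument, using Lemma~\ref{EPUC homotopy} to make $H$ effectively proper and Lemma~\ref{EPUC uniform} to propagate uniformness through the prism simplices, is precisely that dualization. Your added detail (checking that $\sigma_i\times\mathrm{id}_{[0,1]}$ preserves both equicontinuity and the local finiteness bound, and that the finite union over $j$ remains uniform) fills in what the paper leaves implicit.
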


Let $X$ be a metric space, and let $\underline{A}$ be a pseudo-normed abelian group. For $\epsilon>0$, we define $C^{\mathrm{ub},\epsilon}_n(X;\underline{A})$ denote the subgroup of $C^{\mathrm{ub}}_n(X;\underline{A})$ consisting of chains supported on singular $n$-simplices of diameter $<\epsilon$. Then $C^{\mathrm{ub},\epsilon}_n(X;\underline{A})$ is a subcomplex of $C^\mathrm{ub}_n(X;\underline{A})$. Analogously to Lemma \ref{C_epsilon}, we obtain the following.

\begin{lemma}
  \label{C^epsilon}
  The inclusion
  \[
    C_*^{\mathrm{ub},\epsilon}(X;\underline{A})\to C_*^\mathrm{ub}(X;\underline{A})
  \]
  is a chain homotopy equivalence.
\end{lemma}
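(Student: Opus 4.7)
The plan is to dualize the proof of Lemma \ref{C_epsilon} by constructing a chain-level barycentric subdivision operator $\rho \colon C_*^\mathrm{ub}(X;\underline{A}) \to C_*^{\mathrm{ub},\epsilon}(X;\underline{A})$ together with a chain homotopy $D$ exhibiting $\iota \circ \rho \simeq 1$. The key observation, mirroring the one used for cochains, is that for any uniform family $\{\sigma_i \colon \Delta^n \to X\}_{i \in I}$, equicontinuity provides a single modulus $\delta > 0$ (independent of $i$) such that each $\sigma_i$ sends subsets of $\Delta^n$ of diameter $< \delta$ to sets of diameter $< \epsilon$, and hence a single integer $m$ such that every simplex appearing in the $m$-th barycentric subdivision of $\sigma_i$ has image of diameter $< \epsilon$ for every $i$. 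Since barycentric subdivision of $\Delta^n$ yields a fixed finite number of affine simplices with a common equicontinuity modulus, the family of all subdivided simplices remains uniform.

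Following Hatcher \cite[Proposition 2.21]{H}, I would let $T \colon C_n(X) \to C_{n+1}(X)$ denote the standard chain homotopy satisfying $\partial T + T \partial = 1 - S$, where $S$ is the barycentric subdivision operator, and iterate to obtain operators $D_m$ with $\partial D_m + D_m \partial = 1 - S^m$. For each singular simplex $\sigma$, let $m(\sigma)$ be the least integer such that every simplex appearing in $S^{m(\sigma)}(\sigma)$ has diameter $< \epsilon$, and define $D(\sigma) = D_{m(\sigma)}(\sigma)$, extended linearly to $C_*^\mathrm{ub}(X;\underline{A})$. Set
\[
  \rho = 1 - \partial D - D \partial.
\]
Then $\rho$ is automatically a chain map (since $\partial^2 = 0$), $\rho \circ \iota = \mathrm{id}$ on $C_*^{\mathrm{ub},\epsilon}(X;\underline{A})$ because $m(\sigma) = 0$ and $m(\partial_j \sigma) = 0$ whenever $\diam \sigma < \epsilon$, and $D$ is the desired chain homotopy between $\iota \circ \rho$ and the identity on $C_*^\mathrm{ub}(X;\underline{A})$.

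The main obstacle is verifying that all of these operators respect the uniformness condition and land in the correct subcomplexes. For a uniform family $\{\sigma_i\}_{i \in I}$ occurring in a uniform bounded chain, the opening observation gives a uniform bound $M$ on $m(\sigma_i)$; hence $D(\sigma_i)$ is a linear combination of a uniformly bounded number of iterated affine subdivisions of $\sigma_i$, and the resulting family of simplices is again uniform by a straightforward induction on the number of subdivisions, exactly as in Lemma \ref{C_epsilon}. That $\rho$ actually takes values in $C_*^{\mathrm{ub},\epsilon}(X;\underline{A})$ follows from Hatcher's explicit expansion of $\rho(\sigma)$ in terms of the summands of $S^{m(\sigma)}(\sigma)$, together with boundary corrections of the form $S^{m(\sigma)-m(\partial_j \sigma)}$ applied to faces of $\sigma$, all of which have diameter $< \epsilon$ by the choice of $m$. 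These uniformness verifications introduce no essentially new idea beyond those appearing in the cochain case, so the argument goes through by the direct dualization just described.
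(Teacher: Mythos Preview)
Your proposal is correct and is precisely the argument the paper intends: the paper states Lemma~\ref{C^epsilon} without proof, noting only that it is obtained ``analogously to Lemma~\ref{C_epsilon}'', and your dualization of that lemma via Hatcher's subdivision operator and chain homotopy is exactly this analogy. One minor imprecision: the boundary correction terms in Hatcher's expansion of $\rho(\sigma)$ are of the form $TS^i(\partial_j\sigma)$ for $m(\partial_j\sigma)\le i<m(\sigma)$ rather than $S^{m(\sigma)-m(\partial_j\sigma)}(\partial_j\sigma)$, but since each such term is supported in the image of a simplex of $S^i(\partial_j\sigma)$ (which already has diameter $<\epsilon$), your conclusion that $\rho$ lands in $C_*^{\mathrm{ub},\epsilon}$ is unaffected.
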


The proof of Theorem \ref{Mayer-Vietoris cohomology} applies verbatim to uniform bounded homology, using Lemma \ref{C^epsilon} in place of Lemma \ref{C_epsilon}. Hence we obtain the following Mayer-Vietoris sequence.

\begin{theorem}
  \label{Mayer-Vietoris homology}
  Let $X=U\cup V$ be a cover of a metric space $X$ with Lebesgue number $\epsilon>0$. Then there is a long exact sequence
  \begin{multline*}
    \cdots\to H^\mathrm{ub}_n(U\cap V;\underline{A})\xrightarrow{((j_U)_*,(j_V)_*)}H^\mathrm{ub}_n(U;\underline{A})\oplus H^\mathrm{ub}_n(V;\underline{A})\\
    \xrightarrow{i_U^*-i_V^*}H^\mathrm{ub}_n(X;\underline{A})\to H_\mathrm{ub}^{n-1}(U\cap V;\underline{A})\to\cdots
  \end{multline*}
  where $i_U\colon U\to X$, $i_V\colon V\to X$, $j_U\colon U\cap V\to U$ and $j_U\colon U\cap V\to V$ are inclusions.
\end{theorem}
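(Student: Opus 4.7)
The plan is to mimic the proof of Theorem \ref{Mayer-Vietoris cohomology}, dualising the argument by interchanging the roles of cochains and chains and invoking Lemma \ref{C^epsilon} in place of Lemma \ref{C_epsilon}.

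First I would construct, on the level of small-diameter chains, a short exact sequence of chain complexes
\[
  0 \to C_*^{\mathrm{ub},\epsilon}(U\cap V;\underline{A}) \xrightarrow{((j_U)_\sharp,(j_V)_\sharp)} C_*^{\mathrm{ub},\epsilon}(U;\underline{A}) \oplus C_*^{\mathrm{ub},\epsilon}(V;\underline{A}) \xrightarrow{(i_U)_\sharp-(i_V)_\sharp} C_*^{\mathrm{ub},\epsilon}(X;\underline{A}) \to 0.
\]
Injectivity of the first arrow is immediate and the composition vanishes by definition. For exactness at the middle term, if $(i_U)_\sharp(c_U)=(i_V)_\sharp(c_V)$ in $C_*^{\mathrm{ub},\epsilon}(X;\underline{A})$, then the two formal sums $c_U$ and $c_V$ coincide, forcing every supporting simplex to lie in $U\cap V$, so the common chain arises from $C_*^{\mathrm{ub},\epsilon}(U\cap V;\underline{A})$. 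Surjectivity on the right is where the Lebesgue number enters: any singular $n$-simplex of diameter less than $\epsilon$ lies entirely in $U$ or entirely in $V$, so a chain $c=\sum_{i\in I}a_i\sigma_i\in C_*^{\mathrm{ub},\epsilon}(X;\underline{A})$ admits a partition $I=I_U\sqcup I_V$ with $\sigma_i(\Delta^n)\subset U$ for $i\in I_U$ and $\sigma_i(\Delta^n)\subset V$ for $i\in I_V$ (making an arbitrary choice when both containments hold). Setting $c_U=\sum_{i\in I_U}a_i\sigma_i$ and $c_V=-\sum_{i\in I_V}a_i\sigma_i$ then gives a preimage of $c$, and both are uniform bounded chains in their respective subspaces because any subfamily of a uniform family remains uniform (the local finiteness constant $K_r$ and the equicontinuity bounds pass to subfamilies, and restricting the ambient space only shrinks the balls $B_r$ counted), while the supremum of the $|a_i|$ is preserved under restriction of the index set.

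Next I would pass to the long exact sequence in homology derived from this short exact sequence. Since the inclusion $C_*^{\mathrm{ub},\epsilon}(Z;\underline{A})\hookrightarrow C_*^\mathrm{ub}(Z;\underline{A})$ is natural with respect to effectively proper uniformly continuous maps, Lemma \ref{C^epsilon} allows the four homology groups appearing in this sequence to be identified with the corresponding uniform bounded homology groups, transporting the connecting maps through the chain homotopy equivalences. The result is precisely the Mayer-Vietoris sequence stated in the theorem, with maps $((j_U)_*,(j_V)_*)$ and $(i_U)_*-(i_V)_*$.

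The only genuine obstacle is the surjectivity step; once one checks that restricting a uniform family of singular simplices to a subset of its index set still produces a uniform family (together with the corresponding statement for the coefficient bound), the remainder of the argument is formal and parallels the cohomological case word for word.
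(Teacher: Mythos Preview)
Your proposal is correct and follows exactly the route the paper takes: the paper simply states that the proof of Theorem \ref{Mayer-Vietoris cohomology} applies verbatim to homology, using Lemma \ref{C^epsilon} in place of Lemma \ref{C_epsilon}. You have spelled out the details of that verbatim translation (the short exact sequence at the $\epsilon$-controlled level, the Lebesgue-number splitting for surjectivity, and the transfer via Lemma \ref{C^epsilon}), but the strategy is identical.
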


Bounded uniform homology also satisfies excision.

\begin{theorem}
  \label{excision cohomology}
  Let $(X,Y)$ be a pair of metric spaces. If $Z\subset Y$ satisfies $B_\epsilon(Z)\subset Y$ for some $\epsilon>0$, then the natural map
  \[
    H_*^\mathrm{ub}(X-Z,Y-Z;\underline{A})\to H_*^\mathrm{ub}(X,Y;\underline{A})
  \]
  is an isomorphism.
\end{theorem}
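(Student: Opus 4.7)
The plan is to run the homological analog of the excision argument from Theorem 2.12, using Lemma 3.3 in place of Lemma 2.9. Set $U=X-Z$ and $V=Y$, so that $X=U\cup V$ is a cover with Lebesgue number $\epsilon>0$, and $U\cap V=Y-Z$. The whole proof then hinges on the observation that every $\epsilon$-small uniform bounded chain on $X$ splits as a sum of an $\epsilon$-small chain on $U$ and one on $V$: each simplex $\sigma_i$ of diameter $<\epsilon$ is contained in $U$ or in $V$, so we may partition the index set into $I_U\sqcup I_V$ accordingly, and each subfamily remains uniform with bounded coefficients. Hence as subgroups of $C_*^{\mathrm{ub},\epsilon}(X;\underline{A})$ we have the identity
\[
  C_*^{\mathrm{ub},\epsilon}(X;\underline{A})=C_*^{\mathrm{ub},\epsilon}(U;\underline{A})+C_*^{\mathrm{ub},\epsilon}(V;\underline{A}).
\]

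The second isomorphism theorem, applied to the subgroups $C_*^{\mathrm{ub},\epsilon}(U;\underline{A})$ and $C_*^{\mathrm{ub},\epsilon}(V;\underline{A})$ of $C_*^{\mathrm{ub},\epsilon}(X;\underline{A})$, then gives a natural isomorphism of chain complexes
\[
  \frac{C_*^{\mathrm{ub},\epsilon}(U;\underline{A})}{C_*^{\mathrm{ub},\epsilon}(U\cap V;\underline{A})}\xrightarrow{\ \cong\ }\frac{C_*^{\mathrm{ub},\epsilon}(X;\underline{A})}{C_*^{\mathrm{ub},\epsilon}(V;\underline{A})},
\]
which, under the identifications $U=X-Z$, $V=Y$, $U\cap V=Y-Z$, is precisely the inclusion-induced map $C_*^{\mathrm{ub},\epsilon}(X-Z,Y-Z;\underline{A})\to C_*^{\mathrm{ub},\epsilon}(X,Y;\underline{A})$. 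To pass from $\epsilon$-small chains back to all uniform bounded chains, consider the commutative square in which the vertical maps are the inclusions of Lemma 3.3 applied to the pairs $(X-Z,Y-Z)$ and $(X,Y)$, and the horizontal maps are induced by the inclusion of pairs. It suffices to verify that each vertical map induces an isomorphism in homology; the previous step then forces the bottom horizontal map to do so as well.

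The main technical point, and the only genuine obstacle, is upgrading Lemma 3.3 from absolute to relative chain complexes. The chain homotopy equivalence $\iota\colon C_*^{\mathrm{ub},\epsilon}(X;\underline{A})\to C_*^\mathrm{ub}(X;\underline{A})$ is built from iterated barycentric subdivision with a prism-style chain homotopy, and the subdivision operator and its homotopy are natural under inclusions of metric subspaces; in particular they restrict to the subcomplex $C_*^\mathrm{ub}(Y;\underline{A})$ and preserve $C_*^{\mathrm{ub},\epsilon}(Y;\underline{A})$. Hence $\iota$ descends to the quotients $C_*^{\mathrm{ub},\epsilon}(X,Y;\underline{A})\to C_*^\mathrm{ub}(X,Y;\underline{A})$ together with its chain homotopy inverse, giving a chain homotopy equivalence of relative complexes (or, if one prefers, one invokes the five lemma on the long exact sequences of the pairs). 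The same applies to $(X-Z,Y-Z)$. Combining these relative chain homotopy equivalences with the isomorphism of $\epsilon$-small relative complexes established above yields the desired isomorphism $H_*^\mathrm{ub}(X-Z,Y-Z;\underline{A})\cong H_*^\mathrm{ub}(X,Y;\underline{A})$.
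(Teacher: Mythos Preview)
Your proof is correct and follows essentially the same route as the paper: set $U=X-Z$, $V=Y$, use the Lebesgue number to get the sum decomposition $C_*^{\mathrm{ub},\epsilon}(X;\underline{A})=C_*^{\mathrm{ub},\epsilon}(U;\underline{A})+C_*^{\mathrm{ub},\epsilon}(V;\underline{A})$, deduce the relative isomorphism at the $\epsilon$-level via the second isomorphism theorem, and conclude using Lemma~3.3. The paper is terser about the passage from Lemma~3.3 (absolute) to the relative statement, whereas you make explicit that the subdivision operator and its chain homotopy are natural in inclusions and hence descend to quotients; this is a welcome clarification but not a different idea.
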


\begin{proof}
  Let $U=X-Z$ and $V=Y$. Then the cover $X=U\cup V$ has Lebesgue number $\epsilon>0$, implying that $C^{\mathrm{ub},\epsilon}_*(X;\underline{A})=C_*^{\mathrm{ub},\epsilon}(U;\underline{A})+C_*^{\mathrm{ub},\epsilon}(V;\underline{A})$. In particular,
  \[
    C^{\mathrm{ub},\epsilon}_*(X,Y;\underline{A})=C_*^{\mathrm{ub},\epsilon}(U;\underline{A})/C_*^{\mathrm{ub},\epsilon}(U\cap V;\underline{A}).
  \]
  The conclusion follows from Lemma \ref{C^epsilon}.
\end{proof}

\subsection{$0$-th uniform bounded homology}

We recall the definition of Block and Weinberger's uniformly finite homology \cite{BW}. Let $X$ be a metric space. Define $C_n^\mathrm{uf}(X)$ to be the abelian group of (possibly infinite) formal linear combinations
\[
  \sum_{i\in I}a_i\bar{x}_i,
\]
where $a_i\in\Z$ and $\bar{x}_i\in X^{n+1}=\overbrace{X\times\cdots\times X}^{n+1}$, satisfying the following conditions:

\begin{enumerate}
  \item For every $r>0$, there exists $K_r\ge 0$ such that for any $\bar{y}\in X^{n+1}$, the number of $\bar{x}_i$ intersecting $B_r(\bar{y})$ is at most $K_r$.

  \item There exists $R>0$ such that $d(\bar{x}_i,\Delta)<R$ for all $i\in I$, where $\Delta\subset X^{n+1}$ denotes the diagonal subset.

  \item $\displaystyle\sup_{i\in I}|a_i|<\infty$.
\end{enumerate}

\noindent Then $C_*^\mathrm{uf}(X;\underline{A})$ is a chain complex under the boundary operator
\[
  \partial\colon C_n^\mathrm{uf}(X)\to C_{n-1}^\mathrm{uf}(X),\quad\sum_{i\in I}a_i\bar{x}_i\mapsto\sum_{i\in I}a_i(\partial\bar{x}_i)
\]
where for $(x_0,\ldots,x_n)\in X^{n+1}$, we set
\[
  \partial(x_0,\ldots,x_n)=\sum_{i=0}^n(-1)(x_0,\ldots,x_{i-1},x_{i+1},\ldots,x_n).
\]

\begin{definition}
  [Block and Weinberger \cite{BW}]
  The \emph{uniformly finite homology} of a metric space $X$ is defined by
  \[
    H^\mathrm{uf}_*(X)=H_*(C_*^\mathrm{uf}(X)).
  \]
\end{definition}

As mentioned above, uniformly finite homology is functorial with respect to effectively proper bornologous maps and is invariant under a quasi-isometry. Thus, it detects large scale (coarse) properties of metric spaces.

Let $e_0,\ldots,e_n$ denote the standard basis of $\R^{n+1}$., and regard $\Delta^n$ as the convex hull of $e_0,\ldots,e_n$. Given a uniform family $\{\sigma_i\colon\Delta^n\to X\}_{i\in I}$ of singular $n$-simplices in $X$, the family of $(n+1)$-tuples $\{\sigma_i(e_0),\ldots,\sigma_i(e_n)\}_{i\in I}$ satisfies conditions (1) and (2) above, since equicontinuity of $\{\sigma_i\}_{i\in I}$ implies uniform control on the diameter of the image of the vertices. Hence, we may define a chain map
\[
  \alpha\colon C_n^\mathrm{ub}(X)\to C_n^\mathrm{uf}(X),\quad\sum_{i\in I}a_i\sigma_i\mapsto\sum_{i\in I}a_i(\sigma_i(e_0),\ldots,\sigma_i(e_n)).
\]
This induces a natural map on homology
\[
  \alpha\colon H_*^\mathrm{ub}(X)\to H_*^\mathrm{uf}(X).
\]

\begin{proposition}
  \label{0-dim}
  If $X$ is a geodesic space, i.e. any two points in $X$ can be joined by a geodesic, then the map
  \[
    \alpha\colon H_0^\mathrm{ub}(X)\to H_0^\mathrm{uf}(X)
  \]
  is an isomorphism.
\end{proposition}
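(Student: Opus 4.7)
The plan is to observe that in degree zero the two chain groups already coincide under $\alpha$, and then to use the geodesic hypothesis to match one-dimensional boundaries.

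First, I would unpack the definitions in degree zero. A singular $0$-simplex $\sigma\colon\Delta^0\to X$ is literally a point $\sigma(e_0)\in X$, and the equicontinuity condition in the definition of a uniform family is automatic on the one-point space $\Delta^0$. Comparing with the three conditions defining $C_0^\mathrm{uf}(X)$---where $\Delta=X\subset X^1$ makes condition (2) vacuous and the remaining conditions reduce to local finiteness and bounded coefficients---one checks that $\alpha$ is literally an isomorphism of abelian groups $C_0^\mathrm{ub}(X)\xrightarrow{\cong}C_0^\mathrm{uf}(X)$. Surjectivity of $\alpha_*$ on $H_0$ is then immediate, since every cycle in $C_0^\mathrm{uf}(X)$ is simultaneously a cycle in $C_0^\mathrm{ub}(X)$.

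The substantive content is injectivity, which reduces to the inclusion $\partial C_1^\mathrm{uf}(X)\subseteq\partial C_1^\mathrm{ub}(X)$ inside this common group. (The reverse inclusion is automatic because $\alpha$ is a chain map, and taking vertex pairs of a uniform family of $1$-simplices yields a valid $1$-chain in $C_1^\mathrm{uf}(X)$: equicontinuity on the compact $\Delta^1$ uniformly bounds $d(\sigma_i(e_0),\sigma_i(e_1))$, supplying the constant $R$ in condition (2).) So suppose $d=\sum_i a_i(x_{i,0},x_{i,1})\in C_1^\mathrm{uf}(X)$, let $R>0$ be a uniform bound on $d(x_{i,0},x_{i,1})$ coming from condition (2), and let $K_r$ be the local-finiteness constants for $d$. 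Using the geodesic hypothesis, I would pick for each $i$ a constant-speed geodesic $\gamma_i\colon\Delta^1\cong[0,1]\to X$ from $x_{i,0}$ to $x_{i,1}$ and set $d'=\sum_i a_i\gamma_i$, which by construction satisfies $\partial d'=\partial d$ in $C_0$.

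It remains to check that $d'\in C_1^\mathrm{ub}(X)$. Coefficients are bounded since they equal the $a_i$. Each $\gamma_i$ is $R$-Lipschitz, so $\{\gamma_i\}$ is equicontinuous. For local finiteness, given $r>0$ and $x\in X$, if $\gamma_i$ meets $B_r(x)$ then $d(x_{i,0},x)<r+R$, and using $d(x_{i,0},x_{i,1})<R$ again shows that $(x_{i,0},x_{i,1})$ lies within distance $r+R$ of $(x,x)\in X^2$; hence the number of such $i$ is at most $K_{r+R}$. The essential step---and the reason the geodesic hypothesis is indispensable---is precisely this uniformity verification: in a non-geodesic space one cannot uniformly parametrize paths of bounded diameter, and the natural replacement fails.
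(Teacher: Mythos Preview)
Your proof is correct and follows essentially the same approach as the paper: both arguments identify $C_0^\mathrm{ub}(X)$ with $C_0^\mathrm{uf}(X)$ via $\alpha$ and then use constant-speed geodesics to lift a uniformly finite $1$-chain $\sum_i a_i(x_{i,0},x_{i,1})$ to a uniform bounded $1$-chain $\sum_i a_i\gamma_i$, verifying equicontinuity via the uniform Lipschitz bound and local finiteness via the containment of each $\gamma_i$ in a ball of radius $R$ about its endpoints. Your reduction to the inclusion $\partial C_1^\mathrm{uf}(X)\subseteq\partial C_1^\mathrm{ub}(X)$ and the explicit local-finiteness estimate are in fact somewhat cleaner than the paper's phrasing.
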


\begin{proof}
  By definition, the map $\alpha\colon C_0^\mathrm{ub}(X)\to C_0^\mathrm{uf}(X)$ is an isomorphism. Let
  \[
    \sum_{i\in I}a_i(x_i,y_i)\in C_1^\mathrm{uf}(X)
  \]
  for $a_i\in\Z$ and $(x_i,y_i)\in X^2$. Since $X$ is a geodesic space, for each $i\in I$, there exists a geodesic $\gamma_i\colon[0,1]\to X$ from $x_i$ to $y_i$. By condition (2) of the definition of $C_*^\mathrm{uf}(X)$, there exists $R>0$ such that $d((x_i,y_i),\Delta)<R$ for all $i\in I$; in particular, $d(x_i,y_i)<2R$. Thus for each $i$, we have
  \[
    d(\gamma_i(s),\gamma_i(t))<2R|s-t|,
  \]
  showing that the family $\{\gamma_i\}_{i\in I}$ is equicontinuous. For any $r>0$, there exists $K_r\ge 0$ such that for every $z\in X^2$, the number of $(x_i,y_i)$ for $i\in I$ intersecting $B_r(z)$ is at most $K_r$. If $(x_i,y_i)$ does not intersect $B_s(y)$, then $\gamma_i$ does not intersect $B_{s-2R}(y)$. Hence, the number of $\gamma_i$ intersecting $B_{r+R}(y)$ is at most $K_r$. Therefore, the family $\{\gamma_i\}_{i\in I}$ is uniform, and the correspondence $(x_i,y_i)\mapsto\gamma_i$ induces a right inverse chain map, completing the proof.
\end{proof}

In \cite{BW}, Block and Weinberger introduced a notion of amenability for metric spaces that generalizes the classical concept of amenability for groups. We recall the definition of amenable metric spaces. Let $X$ be a metric space, and let $N_\epsilon(A)$ denotes the open $\epsilon$-neighborhood of $A\subset X$. A subset $\Gamma\subset X$ is called a \emph{quasi-lattice} if the following conditions hold:

\begin{enumerate}
  \item There exists $c>0$ such that $N_c(\Gamma)=X$.

  \item For every $r>0$, there exists $K_r>0$ such that for all $x\in X$, $|\Gamma\cap B_r(x)|\le K_r$.
\end{enumerate}

\noindent A metric space $X$ is said to be \emph{amenable} if it has a quasi-lattice $\Gamma$ such that for every $r,\delta>0$, there is a finite subset $F\subset\Gamma$ satisfying
\[
  \frac{|N_r(F)\cap N_r(X-F)\cap\Gamma|}{|F|}<\delta.
\]

\begin{example}
  \label{amenable group}
  A finitely generated group equipped with a word metric is amenable as a metric space if and only if it is amenable as a group.
\end{example}

\begin{example}
  \label{Galois cover space}
  Let $G\to X\to Y$ be a Galois covering, where $X$ is a proper length space on which $G$ acts isometrically and $Y$ is compact. Then $X$ is amenable if and only if $G$ is amenable.
\end{example}

Block and Weinberger \cite[Theorem 3.1]{BW} established a homological characterization of amenability in terms of the $0$-th uniformly finite homology.

\begin{proposition}
  \label{amenable H_0}
  A metric space $X$ is amenable if and only if $H_0^\mathrm{uf}(X)=0$.
\end{proposition}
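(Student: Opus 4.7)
My plan is to follow Block and Weinberger's strategy in \cite{BW}, reducing the biconditional to whether the fundamental 0-cycle $[\Gamma] = \sum_{\gamma \in \Gamma} \bar{\gamma} \in C_0^\mathrm{uf}(X)$ associated with a fixed quasi-lattice $\Gamma \subset X$ is a boundary in $C_1^\mathrm{uf}(X)$.

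First I would carry out a reduction step. The density condition $N_c(\Gamma) = X$ together with the local finiteness of $\Gamma$ allows one to represent every 0-cycle in $C_0^\mathrm{uf}(X)$, modulo a boundary, by a uniformly bounded $\Z$-valued function on $\Gamma$: push each basis element $\bar{x}$ to its nearest point of $\Gamma$ along a uniformly short 1-simplex, noting that the resulting boundary correction is a uniformly finite 1-chain thanks to the bounded-geometry data of the quasi-lattice. An augmentation-type argument then shows that $H_0^\mathrm{uf}(X) = 0$ if and only if the specific class $[\Gamma]$ vanishes in $H_0^\mathrm{uf}(X)$.

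Next I would establish that $[\Gamma]$ being a boundary is equivalent to $X$ being amenable. Given Følner sets $F_n \subset \Gamma$ with $|N_r(F_n) \cap N_r(\Gamma - F_n) \cap \Gamma|/|F_n| \to 0$, one uses the Følner property to build bounded 1-chains $\beta_n \in C_1^\mathrm{uf}(X)$ supported on short edges inside $F_n$ whose boundaries approximate $[\Gamma]|_{F_n}$ with error concentrated near the Følner boundary; then one passes to a weak-$*$ limit of the coefficient systems in $\ell^\infty(\Gamma\times\Gamma)$ (whose norms stay uniformly controlled by the bounded geometry of $\Gamma$) to produce a single globally bounded 1-chain $\beta$ with $\partial \beta = [\Gamma]$. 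Conversely, given $[\Gamma] = \partial \beta$ with $\beta$ supported on pairs $(x,y)$ of uniformly bounded distance and with uniformly bounded coefficients, pair with the characteristic function $\chi_F$ of any finite $F \subset \Gamma$: the identity $|F| = \langle \chi_F, \partial \beta\rangle = \langle \delta \chi_F, \beta\rangle$ bounds $|F|$ in terms of $|N_r(F) \cap N_r(\Gamma - F) \cap \Gamma|$, yielding the Følner-type inequality in the form required by the definition of amenability.

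The main obstacle I anticipate is the limiting step in the forward direction: the $\beta_n$ are constructed locally on each Følner set, and assembling them into a single chain in $C_1^\mathrm{uf}(X)$ is not automatic. It essentially requires a Hahn–Banach-type extension of the invariant mean implicitly furnished by the Følner sequence, together with a careful use of the bounded geometry of $\Gamma$ to guarantee that the resulting coefficient function remains in $\ell^\infty$ and that the limiting chain is supported on pairs of uniformly bounded distance, so that it actually lies in the uniformly finite chain complex rather than merely in an ambient $\ell^\infty$ completion.
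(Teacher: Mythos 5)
First, a point about the source: the paper does not prove this proposition at all --- it is quoted verbatim from Block and Weinberger \cite[Theorem 3.1]{BW} --- and, as printed, the statement has the biconditional inverted. The correct equivalence, and the one the paper actually uses downstream (Corollary \ref{H_0 amenable}, Corollary \ref{amenable Lefschetz}, Example \ref{connected sum}), is that $X$ is amenable if and only if $H_0^\mathrm{uf}(X)\ne 0$. Your proposal sets out to prove the literal printed statement, and as a consequence both halves of your argument run in the wrong direction.

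Concretely: in the forward direction you propose to use F\o{}lner sets to assemble a uniformly finite $1$-chain $\beta$ with $\partial\beta=[\Gamma]$. This cannot succeed, and the ``main obstacle'' you flag in the limiting step is not technical but fatal: for $X=\Z$ (amenable) one has $H_0^\mathrm{uf}(\Z)\cong\ell^\infty(\Z)_\Z\ne 0$ and $[\Gamma]\ne 0$, detected by pairing with an invariant mean exactly as in Example \ref{connected sum}; the boundary error concentrated near $\partial F_n$ is small relative to $|F_n|$ but it is precisely what carries the class, and no weak-$*$ limit removes it. In the converse direction your pairing computation is correct but you read off the wrong conclusion: $|F|=\langle\chi_F,\partial\beta\rangle=\langle\delta\chi_F,\beta\rangle\le C\,\|\beta\|_\infty\,|N_r(F)\cap N_r(\Gamma-F)\cap\Gamma|$ is a uniform \emph{lower} bound on the isoperimetric ratio $|N_r(F)\cap N_r(\Gamma-F)\cap\Gamma|/|F|$, which is the \emph{negation} of the F\o{}lner condition in the paper's definition of amenability. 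So ``$[\Gamma]$ bounds'' implies $X$ is nonamenable; this is Block--Weinberger's ``amenable $\Rightarrow[\Gamma]\ne0$'' direction in contrapositive, not a proof of amenability. Finally, the genuinely hard implication of the true statement --- nonamenable $\Rightarrow H_0^\mathrm{uf}(X)=0$, i.e.\ every uniformly finite $0$-cycle bounds --- is entirely absent from your outline; it requires the Hall's-marriage/Ponzi-scheme construction of a bounded-displacement tail map on the quasi-lattice, not a F\o{}lner argument.
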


By Examples \ref{amenable group} and \ref{Galois cover space} and Proposition \ref{amenable H_0}, we get:

\begin{corollary}
  \label{H_0 amenable}
  Let $G\to X\to Y$ be as in Example \ref{Galois cover space}. Then $H_0^\mathrm{uf}(X)\ne 0$ if and only if $G$ is amenable.
\end{corollary}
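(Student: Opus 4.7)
The statement is an almost formal consequence of two results already on the table, so the plan is simply to chain two biconditionals together, with essentially no geometric work left to do at this stage. The first input is Example \ref{Galois cover space}, which records that for a Galois covering $G \to X \to Y$ with $X$ a proper length space admitting an isometric $G$-action and $Y$ compact, the amenability of the total space $X$ as a metric space is equivalent to the amenability of the deck transformation group $G$. The second input is Proposition \ref{amenable H_0} (Block--Weinberger), which characterizes amenability of a metric space $X$ in terms of the nonvanishing of $H_0^{\mathrm{uf}}(X)$.

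Concretely, my plan is the following. First, I would invoke Example \ref{Galois cover space} to rewrite the hypothesis ``$G$ is amenable'' as ``$X$ is amenable'', since we are in exactly the Galois covering situation required there. Next, I would apply Proposition \ref{amenable H_0} to the metric space $X$ to convert the amenability of $X$ into the statement about $H_0^{\mathrm{uf}}(X)$. Composing these two equivalences produces precisely the biconditional claimed in the corollary. No additional hypotheses on $X$ beyond what is already assumed in Example \ref{Galois cover space} are needed, because Proposition \ref{amenable H_0} is stated for a general metric space.

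The only genuinely subtle point, and the one that should be acknowledged rather than obstructed over, is that the two ``amenability'' notions we are linking are a priori different: Example \ref{amenable group} concerns amenability of $G$ as a discrete group (via a word metric on a finitely generated group), while Example \ref{Galois cover space} concerns amenability of $X$ as a metric space in the Block--Weinberger sense. The compatibility between these is exactly the content of Example \ref{Galois cover space}, which one may think of as asserting that $X$ is quasi-isometric to $G$ (with any word metric, when $G$ is finitely generated), so that metric amenability of $X$ passes through the quasi-isometry to group amenability of $G$. Since this identification is already given to us as a stated example, I would simply cite it.

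Thus the proof is a two-line citation chain, and the ``main obstacle'' is not in the corollary itself but in the two inputs: Proposition \ref{amenable H_0}, whose proof is the Block--Weinberger characterization of $H_0^{\mathrm{uf}}$ via F\o lner-type sets and the boundary map in degree $1$, and Example \ref{Galois cover space}, whose justification rests on constructing a $G$-equivariant quasi-isometry between $X$ and $G$ using a fundamental domain coming from the compactness of $Y$. Neither of these needs to be reproved here, so the argument for the corollary reduces to a single display chaining the equivalences $H_0^{\mathrm{uf}}(X)\neq 0 \Longleftrightarrow X \text{ amenable} \Longleftrightarrow G \text{ amenable}$.
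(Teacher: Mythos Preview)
Your proposal is correct and matches the paper's approach exactly: the paper's proof is nothing more than the one-line citation ``By Examples \ref{amenable group} and \ref{Galois cover space} and Proposition \ref{amenable H_0}, we get:'' preceding the corollary. Your discussion of the compatibility of the two amenability notions is a bit more explicit than the paper, but the argument is the same chain of biconditionals.
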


We show a useful algebraic description of $H_0^\mathrm{uf}(X)$ for the metric space $X$ in Example \ref{Galois cover space}. For a set $S$, let $\ell^\infty(S)$ denote the abelian group of bounded functions $S\to\Z$. If $G$ is a group, then $G$ acts on $\ell^\infty(G)$ by right translation:
\[
  (g\cdot\phi)(x)=\phi(xg)
\]
for $x,g\in G$ and $\phi\in\ell^\infty(G)$. The module of coinvariants is defined by
\[
  \ell^\infty(G)_G=\ell^\infty(G)/\langle\phi-g\cdot\phi\mid\phi\in\ell^\infty(G),\,g\in G\rangle.
\]
Let $G\to X\to Y$ be as in Example \ref{Galois cover space}. Then there exist $R>0$ and a decomposition
\[
  X=\coprod_{g\in G}X_g,
\]
where $g\in X_g$ and $\mathrm{diam}(X_g)<R$ for all $g\in G$.

\begin{lemma}
  \label{H_0 covering}
  Let $G\to X\to Y$ be as in Example \ref{Galois cover space}. Then the map
  \[
    H_0^\mathrm{uf}(X)\to\ell^\infty(G)_G,\quad \left[\sum_{i\in I}a_ix_i\right]\mapsto\left[G\to\Z,\quad g\mapsto\sum_{i\in I_g}a_i\right]
  \]
  is a well-defined isomorphism, where $I_g=\{i\in I\mid x_i\in X_g\}$.
\end{lemma}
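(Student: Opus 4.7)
The plan is to verify four claims: (i) the formula descends to a map $C_0^{\mathrm{uf}}(X)\to\ell^\infty(G)$, (ii) it sends boundaries from $C_1^{\mathrm{uf}}(X)$ into the coinvariant-relations subgroup, (iii) the resulting map on homology is surjective, and (iv) it is injective. Throughout I identify each $g\in G$ with its distinguished point in $X_g\subset X$; by proper discontinuity of the isometric $G$-action, the orbit $G\subset X$ is uniformly discrete, so any chain supported on $G$ automatically satisfies the local-finiteness condition.

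For (i), given $c=\sum_i a_ix_i\in C_0^{\mathrm{uf}}(X)$, the bound $\diam(X_g)<R$ combined with local finiteness forces $|I_g|$ to be uniformly bounded in $g$; together with $\sup_i|a_i|<\infty$, this shows $g\mapsto\sum_{i\in I_g}a_i$ is a bounded function on $G$. For (ii), let $\eta=\sum_j b_j(y_j,z_j)\in C_1^{\mathrm{uf}}(X)$, so $d(y_j,z_j)<R'$ for some $R'$. If $y_j\in X_{g_j}$ and $z_j\in X_{h_j}$, then $d(g_j,h_j)<2R+R'$, so $k_j:=g_j^{-1}h_j$ lies in the finite set $F:=\{k\in G\mid d(e,k)<2R+R'\}$ (finite because the orbit is uniformly discrete). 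Grouping by $k\in F$ and setting $\phi_k(g):=\sum_{\{j:\,y_j\in X_g,\,k_j=k\}}b_j$, one checks $\phi_k\in\ell^\infty(G)$ and that the image of $\partial\eta$ equals $\sum_{k\in F}(k^{-1}\!\cdot\!\phi_k-\phi_k)$, a finite sum of coinvariant-relation generators.

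For (iii), given $\phi\in\ell^\infty(G)$, the chain $\sum_{g\in G}\phi(g)\cdot g$ lies in $C_0^{\mathrm{uf}}(X)$ and visibly maps to $\phi$. For (iv), suppose $c=\sum_i a_ix_i$ maps to $0$ in $\ell^\infty(G)_G$. I first reduce to the orbit: for each $x_i\in X_{g_i}$, the pair $(x_i,g_i)$ satisfies $d(x_i,g_i)<R$, so $\zeta:=\sum_i a_i(x_i,g_i)\in C_1^{\mathrm{uf}}(X)$, and $c-\partial\zeta=\sum_g\phi(g)\cdot g$ with $\phi(g):=\sum_{i\in I_g}a_i$. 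Since $[\phi]=0$, write $\phi=\sum_{k\in S}(\psi_k-k\!\cdot\!\psi_k)$ with $S\subset G$ finite and $\psi_k\in\ell^\infty(G)$. The candidate 1-chain $\xi:=-\sum_{k\in S}\sum_{g\in G}\psi_k(g)(g,gk^{-1})$ lies in $C_1^{\mathrm{uf}}(X)$: finiteness of $S$ bounds the distance to the diagonal, $\sup|\psi_k|<\infty$ bounds coefficients, and proper discontinuity yields local finiteness. An index shift $h=gk^{-1}$ then shows $\partial\xi=\sum_g\phi(g)\cdot g$, whence $c=\partial(\zeta+\xi)$.

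The main obstacle is the bookkeeping in steps (ii) and (iv): one must match the boundary $\partial(g,gk^{-1})=gk^{-1}-g$ against the chosen convention $(k\!\cdot\!\phi)(x)=\phi(xk)$ so that boundaries produce exactly the generators $\phi-g\!\cdot\!\phi$ (up to sign), and one must carefully verify that each auxiliary 1-chain genuinely lies in $C_1^{\mathrm{uf}}(X)$. The latter rests crucially on the finiteness of $F$ and $S$, which in turn comes from proper discontinuity of the isometric $G$-action together with the bounded-diameter decomposition $X=\coprod_g X_g$.
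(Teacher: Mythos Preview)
Your argument is correct (up to an immaterial sign slip in step (iv), where $c+\partial\zeta=\sum_g\phi(g)\,g$ rather than $c-\partial\zeta$), but it takes a different route from the paper. The paper's proof is two lines of citations: it invokes the quasi-isometry invariance of uniformly finite homology from \cite{BW} to replace $X$ by the orbit $G$ via the retraction $\rho(X_g)=g$, and then quotes \cite{BNW} for the identification $H_0^{\mathrm{uf}}(G)\cong\ell^\infty(G)_G$. Your approach instead unpacks both of these ingredients by hand at the chain level: step (iv)'s chain $\zeta$ is exactly what realizes the quasi-isometry retraction $X\to G$ as a chain homotopy, and steps (ii)--(iv) on the orbit amount to a direct proof of the Brodzki--Niblo--Wright isomorphism. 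The payoff of your version is that it is self-contained and makes the finiteness mechanisms (the finite sets $F$ and $S$) completely explicit; the paper's version is shorter and more conceptual, and it separates the geometric input (quasi-isometry invariance) from the purely group-theoretic one. One small point worth making explicit in your write-up: the inference $d(g_j,h_j)<2R+R'\Rightarrow d(e,g_j^{-1}h_j)<2R+R'$ uses that the identification $g\leftrightarrow g\cdot x_0$ is $G$-equivariant, which follows from choosing $X_g=g\cdot X_e$.
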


\begin{proof}
  In \cite{BW}, it is proved that uniformly finite homology is invariant under quasi-isometry. The inclusion $G\to X$ is a quasi-isometry, and the map
  \[
    \rho\colon X\to G,\quad\rho(X_g)=g
  \]
  is a quasi-isometry inverse. Therefore, the induced map
  \[
    H_0^\mathrm{uf}(X)\to H_0^\mathrm{uf}(G),\quad\left[\sum_{i\in I}a_ix_i\right]\mapsto\left[\left(\sum_{i\in I_g}a_i\right)g\right]
  \]
  is an isomorphism. Moreover, by \cite{BNW}, the map
  \[
    C_0^\mathrm{uf}(G)\to\ell^\infty(G),\quad\sum_{g\in G}b_gg\mapsto[G\to\Z,\quad g\mapsto b_g]
  \]
  induces an isomorphism $H_0^\mathrm{uf}(G)\xrightarrow{\cong}\ell^\infty(G)_G$. Composing these isomorphisms yields the desired statement.
\end{proof}


\section{Uniform bounded cellular (co)homology}\label{Uniform bounded cellular (co)homology}

In this section, we introduce the notion of a uniform CW complex, namely a metric space with a regular CW decomposition whose metric is uniformly approximated by the metrics on the closed discs corresponding to its cells. We then define the uniform bounded (co)homology of such complexes and show that it agrees naturally with uniform bounded (co)homology defined in Sections \ref{Uniform bounded cohomology} and \ref{Uniform bounded homology}.

\subsection{Uniform CW complex}

Let $X$ be a CW complex, and let $\Lambda$ denote the set of its cells. We regard $\Lambda$ as an index set of characteristic maps of $X$. Write $\Lambda_n$ for the subset of $n$-cells and $\Lambda_{\le n}=\bigcup_{i=0}^n\Lambda_i$. Let $D^n\subset\R^n$ and $S^{n-1}\subset\R^n$ denote the unit disc and unit sphere, each equipped with the standard Euclidean metric.

\begin{definition}
  \label{uniform CW complex}
  Let $X$ be a metric space. A regular CW decomposition of $X$ with characteristic maps $\{\phi_\lambda\colon D^{n_\lambda}\to X\}_{\lambda\in\Lambda}$ is called \emph{uniform} if the following conditions hold:

  \begin{enumerate}
    \item For every $r>0$, there exists $K_r\ge 0$ such that for any $x\in X$, at most $K_r$ cells meet $B_r(x)$.

    \item The families of the characteristic maps $\{\phi_\lambda\}_{\lambda\in\Lambda}$ and their inverse maps $\{\phi_\lambda^{-1}\colon\phi_\lambda(D^{n_\lambda})\to D^{n_\lambda}\}_{\lambda\in\Lambda}$ are equicontinuous.

    \item There exists an integer $L>0$ such that for every $\epsilon>0$, there exists $\delta>0$ with the following property: whenever $x,y\in X$ satisfy $d(x,y)<\delta$, there are indices $\lambda_i\in\Lambda$ and points $z_i,\bar{z}_i\in D^{n_{\lambda_i}}$ ($i=1,\ldots,L$) such that
    \[
      d_{D^{n_{\lambda_1}}}(z_1,\bar{z}_1)+\cdots+d_{D^{n_{\lambda_L}}}(z_L,\bar{z}_L)<\epsilon
    \]
    and
    \[
      x=\phi_{\lambda_1}(z_1),\quad\phi_{\lambda_i}(\bar{z}_i)=\phi_{\lambda_{i+1}}(z_{i+1})\quad(i=1,\ldots,L-1),\quad\phi_{\lambda_L}(\bar{z}_L)=y.
    \]
  \end{enumerate}
\end{definition}

A metric space equipped with a specific uniform regular CW decomposition is called a \emph{uniform CW complex}. Conditions (1) and (2) ensure that for each $n\ge 0$, the family of the characteristic maps $\{\phi_\lambda\colon D^n\to X\}_{\lambda\in\Lambda_n}$ is a uniform family of singular $n$-simplices in $X$ (after fixing a biLipschitz identification $\Delta^n\cong D^n$). Condition (3) guarantees that the ambient metric on $X$ is uniformly approximated by the intrinsic metrics on the discs corresponding to its cells.

\begin{example}
  Every uniform simplicial complex (as defined in Section \ref{Uniform bounded cohomology}) is a uniform CW complex.
\end{example}

\begin{example}
  \label{triangulation}
  By \cite{BDG,B}, a (possibly noncompact) Riemannian manifold of bounded geometry is biLipschitz homeomorphic to a uniform simplicial complex. Hence any such manifold admits a uniform regular CW decomposition.
\end{example}

\begin{example}
  The standard CW decomposition
  \[
    [0,1]=\{0\}\cup(0,1)\cup\{1\}
  \]
  is uniform, and we will always regard $[0,1]$ with this uniform structure.
\end{example}

It is well known that a map out of a CW complex is continuous if and only if its compositions with all characteristic maps are continuous. For uniform CW complexes, we can prove the analogous statement for uniform continuity.

\begin{lemma}
  \label{continuity}
  Let $X$ be a uniform CW complex with characteristic maps $\{\phi_\lambda\colon D^{n_\lambda}\to X\}_{\lambda\in\Lambda}$, and let $f\colon X\to Y$ be a map into a metric space $Y$. Then $f$ is uniformly continuous if and only if the family $\{f\circ\phi_\lambda\}_{\lambda\in\Lambda}$ is equicontinuous.
\end{lemma}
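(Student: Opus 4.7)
The plan is to prove both implications directly from Definition \ref{uniform CW complex}. The forward direction is straightforward and relies only on condition (2) of the uniform CW structure, while the converse is the substantive direction where condition (3) plays its essential role by allowing us to ``reconstruct'' the ambient metric on $X$ from the intrinsic metrics on the discs $D^{n_\lambda}$ via short chains of at most $L$ cells.

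For the forward implication, suppose $f$ is uniformly continuous. Given $\epsilon>0$, choose $\delta_1>0$ so that $d(x,y)<\delta_1$ in $X$ implies $d(f(x),f(y))<\epsilon$. By condition (2) of Definition \ref{uniform CW complex}, the family $\{\phi_\lambda\}_{\lambda\in\Lambda}$ is equicontinuous, so there exists $\delta>0$ with $d(\phi_\lambda(z),\phi_\lambda(\bar z))<\delta_1$ whenever $d(z,\bar z)<\delta$, uniformly in $\lambda$. Composing with $f$ gives equicontinuity of $\{f\circ\phi_\lambda\}_{\lambda\in\Lambda}$.

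For the converse, assume equicontinuity of $\{f\circ\phi_\lambda\}_{\lambda\in\Lambda}$. Fix $\epsilon>0$ and use this equicontinuity to choose $\eta>0$ so that $d_{D^{n_\lambda}}(z,\bar z)<\eta$ implies $d(f\phi_\lambda(z),f\phi_\lambda(\bar z))<\epsilon/L$, uniformly in $\lambda$, where $L$ is the integer from condition (3). Apply condition (3) with the parameter $\eta$ in place of $\epsilon$ to obtain $\delta>0$ such that any pair $x,y\in X$ with $d(x,y)<\delta$ can be joined by a chain $x=\phi_{\lambda_1}(z_1)$, $\phi_{\lambda_i}(\bar z_i)=\phi_{\lambda_{i+1}}(z_{i+1})$, $\phi_{\lambda_L}(\bar z_L)=y$ with $\sum_{i=1}^L d_{D^{n_{\lambda_i}}}(z_i,\bar z_i)<\eta$. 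In particular each individual summand is smaller than $\eta$, so each term $d(f\phi_{\lambda_i}(z_i),f\phi_{\lambda_i}(\bar z_i))$ is less than $\epsilon/L$. Telescoping along the chain via the triangle inequality and the matching identities $\phi_{\lambda_i}(\bar z_i)=\phi_{\lambda_{i+1}}(z_{i+1})$ yields
\[
  d(f(x),f(y))\le\sum_{i=1}^L d(f\phi_{\lambda_i}(z_i),f\phi_{\lambda_i}(\bar z_i))<L\cdot\frac{\epsilon}{L}=\epsilon,
\]
which is uniform continuity of $f$.

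The only conceptual point worth flagging is the role of condition (3): without the uniform bound $L$ on the chain length, one could not convert a \emph{pointwise} estimate on each cell (given by equicontinuity of $\{f\circ\phi_\lambda\}$) into a \emph{uniform} estimate on $X$, because the number of hops needed to connect two nearby points might otherwise depend on the points themselves. With the hypothesis that $L$ is a fixed integer independent of $x,y$, the argument reduces to a finite telescoping estimate, and no further machinery is needed.
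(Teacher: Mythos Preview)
Your proof is correct and follows essentially the same approach as the paper: both directions are argued exactly as you do, with the converse using condition~(3) of Definition~\ref{uniform CW complex} to produce a chain of at most $L$ cells and then telescoping via the triangle inequality. The only cosmetic difference is that the paper obtains the bound $L\epsilon$ rather than adjusting the modulus in advance to land on $\epsilon$, which is of course equivalent.
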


\begin{proof}
  The “only if” part is immediate. We prove the “if” part. Let $L>0$ be as in Definition \ref{uniform CW complex}, and assume that $\{f\circ\phi_\lambda\}_{\lambda\in\Lambda}$ is equicontinuous. Then the family is, in particular, uniformly equicontinuous: for every $\epsilon>0$, there exists $\rho>0$ such that for all $\lambda\in\Lambda$ and $a,b\in D^{n_\lambda}$, $d(f\circ\phi_\lambda(a),f\circ\phi_\lambda(b))<\epsilon$ if $d_{D^{n_\lambda}}(a,b)<\rho$. By Definition \ref{uniform CW complex} (3), there exists $\delta>0$ such that whenever $x,y\in X$ satisfy $d(x,y)<\delta$, we can find $\lambda_i\in\Lambda$ and $z_i,\bar{z}_i\in D^{n_{\lambda_i}}$ ($i=1,\ldots,L$) with
  \[
    d_{D^{n_{\lambda_1}}}(z_1,\bar{z}_1)+\cdots+d_{D^{n_{\lambda_L}}}(z_L,\bar{z}_L)<\rho
  \]
  and
  \[
    x=\phi_{\lambda_1}(z_1),\quad\phi_{\lambda_i}(\bar{z}_i)=\phi_{\lambda_{i+1}}(\bar{z}_{i+1})\quad(i=1,\ldots,L-1),\quad\phi_{\lambda_L}(\bar{z}_L)=y.
  \]
  Using the triangle inequality, we obtain
  \[
    d(f(x),f(y))\le\sum_{i=1}^Ld((f\circ\phi_{\lambda_i})(z_i),(f\circ\phi_{\lambda_i})(\bar{z}_i))<L\epsilon.
  \]
  Hence $f$ is uniformly continuous.
\end{proof}

Next, we introduce the notion of uniform discreteness, which plays a role analogous to local finiteness in the cellular setting. A metric space $X$ is said to be \emph{uniformly discrete} if the following conditions hold:
\begin{enumerate}
  \item for every $r>0$, there exists $K_r\ge 0$ such that for any $x\in X$, the ball $B_r(x)$ contains at most $K_r$ points of $X$.

  \item $\displaystyle\inf_{x\ne y\in X}d(x,y)>0$.
\end{enumerate}

\begin{example}
  Let $X$ be a uniform CW complex with index set $\Lambda$. If we identify $\Lambda$ with the subset of $X$ consisting of the centers of its cells, then for each $n\ge 0$, $\Lambda_n$ is uniformly discrete.
\end{example}

\subsection{Uniform bounded cellular (co)homology}

For a simplicial complex $X$, let $X_n$ denote the $n$-skeleton of $X$ for $n\ge 0$, and set $X_{-1}=\emptyset$. For a set $S$ and a pseudo-normed abelian group $\underline{A}$, we write $\ell^\infty(S;\underline{A})$ for the abelian group of bounded functions $S\to A$.

\begin{lemma}
  \label{H(V,W)}
  For $n\ge 0$, let $\Lambda$ be a uniformly discrete metric space, and define
  \[
    V_n=\Lambda\times\mathrm{Int}(D^n)\quad\text{and}\quad W_n=\Lambda\times\mathrm{Int}(D^n-0)
  \]
  where the metric of $V_n$ is given by
  \[
    d((\lambda,x),(\mu,y))=
    \begin{cases}
      d(x,y)&(\lambda=\mu)\\
      d(\lambda,\mu)&(\lambda\ne\mu)
    \end{cases}
  \]
  and $W_n$ is considered as a subspace of $V_n$. Then there are isomorphisms
  \[
    H_\mathrm{ub}^*(V_n,W_n;\underline{A})\cong H^\mathrm{ub}_*(V_n,W_n;\underline{A})\cong
    \begin{cases}
      \ell^\infty(\Lambda;\underline{A})&(*=n)\\
      0&(*\ne n).
    \end{cases}
  \]
\end{lemma}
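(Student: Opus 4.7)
The plan is to exploit the uniform discreteness of $\Lambda$ to decompose the (co)chain complexes on $(V_n, W_n)$ fiberwise, and then apply the classical relative (co)homology of the model pair $(\mathrm{Int}(D^n), \mathrm{Int}(D^n - 0))$, whose (co)homology is $A$ concentrated in degree $n$. Set $\delta_0 := \inf_{\lambda \ne \mu} d(\lambda, \mu) > 0$. With the given metric, the fibers $U_\lambda := \{\lambda\} \times \mathrm{Int}(D^n)$ are pairwise separated by distance $\ge \delta_0$ and each has bounded diameter, so each is clopen in $V_n$. Consequently, every singular simplex $\sigma \colon \Delta^k \to V_n$ has connected image and lies entirely in some $U_\lambda$. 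Identifying $U_\lambda \cong \mathrm{Int}(D^n)$ via $(\lambda, x) \leftrightarrow x$, every uniform bounded (co)chain on $(V_n, W_n)$ becomes a tuple $(u_\lambda)_{\lambda \in \Lambda}$ of ordinary singular (co)chains on the pair $(\mathrm{Int}(D^n), \mathrm{Int}(D^n - 0))$. A key preliminary step is the following characterization: a tuple $(u_\lambda)$ lies in $C^k_{\mathrm{ub}}(V_n, W_n; \underline{A})$ if and only if, for every $\Lambda' \subseteq \Lambda$ and every equicontinuous family $\{\tau_\lambda\}_{\lambda \in \Lambda'}$ of singular $k$-simplices in $\mathrm{Int}(D^n)$, $\sup_{\lambda \in \Lambda'} |u_\lambda(\tau_\lambda)| < \infty$, with an analogous condition on chains. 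One direction uses that $\{s_\lambda \circ \tau_\lambda\}_\lambda$ is uniform in $V_n$ (equicontinuity is inherited, and local finiteness follows from the uniform discreteness of $\Lambda$); the other uses that any uniform family in $V_n$ admits at most a uniformly bounded number of simplices per fiber, hence splits into finitely many such equicontinuous sections.

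Next I construct the candidate isomorphisms. Fix a cocycle $e \in C^n(\mathrm{Int}(D^n), \mathrm{Int}(D^n - 0); \underline{\Z})$ representing the generator of $H^n$ and a relative $n$-cycle $z$ (e.g.\ a small singular $n$-simplex whose interior contains the origin) representing the generator of $H_n$, normalized so that $e(z) = 1$; choose $e$ to be bounded on equicontinuous families of singular simplices (for instance, represented by integration against a compactly supported top-dimensional differential form on $\mathrm{Int}(D^n)$). Then the assignments
\[
  \phi \longmapsto [(\phi(\lambda) \cdot e)_\lambda], \qquad [u] \longmapsto \bigl(\lambda \mapsto u_\lambda(z)\bigr)
\]
give well-defined maps between $\ell^\infty(\Lambda; \underline{A})$ and $H^n_{\mathrm{ub}}(V_n, W_n; \underline{A})$, by the characterization above and by the fact that $\{s_\lambda \circ z\}_\lambda$ is a uniform family in $V_n$. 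The analogous construction for homology, sending $\phi \mapsto [(\phi(\lambda) \cdot z)_\lambda]$ and $[c] \mapsto (\lambda \mapsto e(c_\lambda))$, furnishes the candidate maps for the isomorphism $H^\mathrm{ub}_n(V_n, W_n; \underline{A}) \cong \ell^\infty(\Lambda; \underline{A})$.

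The hard part is verifying that these candidates are genuinely mutually inverse quasi-isomorphisms: every uniform bounded cocycle is cohomologous to one of the form $(\phi(\lambda) \cdot e)_\lambda$, $H^k_{\mathrm{ub}}(V_n, W_n; \underline{A}) = 0$ for $k \ne n$, and dually on chains. Because $C^*(\mathrm{Int}(D^n), \mathrm{Int}(D^n - 0); A)$ is a complex of free $A$-modules whose cohomology is $A$ concentrated in degree $n$, by standard homological algebra (acyclic models, e.g.) it admits a natural chain homotopy $P \colon C^*(\mathrm{Int}(D^n), \mathrm{Int}(D^n - 0); A) \to C^{*-1}(\mathrm{Int}(D^n), \mathrm{Int}(D^n - 0); A)$ satisfying $\delta P + P \delta = \mathrm{id} - \pi_e$, where $\pi_e(u) = u(z) \cdot e$, together with a natural chain homotopy $T$ on chains with analogous properties. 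Naturality ensures that $T\tau$ is a fixed finite linear combination of simplices of the form $\tau \circ q_j$ for certain fixed maps $q_j$ between standard simplices; applied to an equicontinuous family $\{\tau_\lambda\}_\lambda$, the resulting families $\{\tau_\lambda \circ q_j\}_\lambda$ remain equicontinuous. Hence applying $P$ (respectively $T$) fiberwise preserves the uniform bounded condition, providing the required uniform primitives and completing the proof.
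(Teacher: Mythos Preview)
Your strategy is different from the paper's. The paper computes $H^*_\mathrm{ub}(V_n)$ via the uniform homotopy equivalence $V_n\to\Lambda$, reduces to $H^*_\mathrm{ub}(W_n)$ by the long exact sequence of the pair, and then computes $H^*_\mathrm{ub}(W_n)$ by induction on $n$ using a Mayer--Vietoris decomposition of the punctured disk. Your fiberwise decomposition and the characterization of uniform bounded (co)chains on $(V_n,W_n)$ as $\Lambda$-indexed tuples subject to the stated equicontinuity bound are correct and form a legitimate alternative starting point.

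The gap is in the last paragraph. You assert that the relative chain complex of the model pair carries a chain homotopy $T$ with $\partial T+T\partial=\mathrm{id}-\pi_z$ and that $T\tau=\sum_j c_j\,(\tau\circ q_j)$ for fixed universal maps $q_j$ between standard simplices. No such $T$ can exist. A formula of that shape is natural under pushforward by every continuous self-map $f$ of $\mathrm{Int}(D^n)$, and hence so is $\partial T+T\partial$; but $\mathrm{id}-\pi_z$ is not. For a self-map $f$ preserving $\mathrm{Int}(D^n-0)$ one has $\pi_z(f_*c)=e(f_*c)\,z$ while $f_*\pi_z(c)=e(c)\,f_*z$, and already for $c=z$ and $f$ of local degree $2$ at the origin these differ as relative chains (the difference $2z-f_*z$ involves the simplex $z$, whose image contains $0$, with nonzero coefficient). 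Acyclic models constructs natural homotopies only between \emph{natural} chain maps; plain homological algebra does give some $T$, but with no control over its shape and hence no reason for the fiberwise primitives $(Tc_\lambda)_\lambda$ to satisfy the uniform boundedness condition. The paper's route avoids this entirely: the prism operator and barycentric subdivision used to establish homotopy invariance, excision, and Mayer--Vietoris \emph{are} given by universal formulas on simplices and so manifestly preserve uniform families, and the relative computation is then extracted from the absolute ones via the long exact sequence rather than by a direct chain homotopy on the quotient.
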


\begin{proof}
  We prove the cohomological statement; the homological one follows verbatim. The projection $V_n\to\Lambda$ is an effectively proper uniform homotopy equivalence. Hence, by Theorem \ref{homotopy invariance cohomology}, $H^*_\mathrm{ub}(V_n;\underline{A})\cong H^*_\mathrm{ub}(\Lambda_n;\underline{A})$. By direct computation from the definition of uniform bounded cochains,
  \begin{equation}
    \label{V_n}
    H^*_\mathrm{ub}(V_n;\underline{A})\cong
    \begin{cases}
      \ell^\infty(\Lambda;\underline{A})&(*=0)\\
      0&(*\ne 0).
    \end{cases}
  \end{equation}
  The long exact sequence for the pair $(V_n,W_n)$ (Theorem \ref{cohomology exact sequence}) gives
  \begin{equation}
    \label{(V,W)}
    \cdots\to H^*_\mathrm{ub}(V_n,W_n;\underline{A})\to H^*_\mathrm{ub}(V_n;\underline{A})\to H^*_\mathrm{ub}(W_n;\underline{A})\to H^{*+1}_\mathrm{ub}(V_n,W_n;\underline{A})\to\cdots.
  \end{equation}
  It therefore suffices to determine $H^*_\mathrm{ub}(W_n;\underline{A})$. We claim that
  \[
    H^*_\mathrm{ub}(W_n;\underline{A})\cong
    \begin{cases}
      \ell^\infty(\Lambda;\underline{A})&(*=0)\\
      \ell^\infty(\Lambda;\underline{A})\times\ell^\infty(\Lambda;\underline{A})&(*=n=1)\\
      \ell^\infty(\Lambda;\underline{A})&(*=n\ge 2)\\
      0&(*\ne 0,n)
    \end{cases}
  \]
  We argue by induction on $n$. For $n=1$, note that $W_1$ is biLipschitz homeomorphic to $V_1\sqcup V_1$. Then the claim follows immediately from \eqref{(V,W)}. Suppose the claim holds for $n$. Set
  \[
    \begin{cases}
      U_+=\mathrm{Int}(D^{n+1})-\{(0,\ldots,0,t)\in\mathrm{Int}(D^{n+1})\mid t\le 0\}\\
      U_-=\mathrm{Int}(D^{n+1})-\{(0,\ldots,0,t)\in\mathrm{Int}(D^{n+1})\mid t\ge 0\}
    \end{cases}
  \]
  Then $\mathrm{Int}(D^{n+1})=(\Lambda\times U_+)\cup(\Lambda\times U_-)$, and both $\Lambda\times U_\pm$ are biLipschitz homeomorphic to $V_{n+1}$. Moreover, the inclusion $W_n\to(\Lambda\times U_+)\cap(\Lambda\times U_-)$ is an effectively proper uniform homotopy equivalence. Hence, by Theorem \ref{homotopy invariance cohomology},
  \[
    H_\mathrm{ub}^*(\Lambda\times U_+;\underline{A})\cong H_\mathrm{ub}^*(\Lambda\times U_+;\underline{A})\cong H_\mathrm{ub}^*(V_{n+1};\underline{A})
  \]
  and
  \[
    H_\mathrm{ub}^*((\Lambda\times U_+)\cap(\Lambda\times U_-);\underline{A})\cong H_\mathrm{ub}^*(W_n;\underline{A}).
  \]
  Applying the Mayer-Vietoris sequence (Theorem \ref{Mayer-Vietoris cohomology}) completes the induction.
\end{proof}

\begin{lemma}
  \label{cellular}
  Let $X$ be a uniform CW complex with index set $\Lambda$. Then, for each $n\ge 0$, there are natural isomorphisms
  \[
    H_\mathrm{ub}^*(X_n,X_{n-1};\underline{A})\cong
    \begin{cases}
      \ell^\infty(\Lambda;\underline{A})&(*=n)\\
      0&(*\ne 0)
    \end{cases}
  \]
  and
  \[
    H^\mathrm{ub}_*(X_n,X_{n-1};\underline{A})\cong
    \begin{cases}
      \ell^\infty(\Lambda;\underline{A})&(*=n)\\
      0&(*\ne 0),
    \end{cases}
  \]
  natural with respect to effectively proper uniformly continuous cellular maps between uniform CW complexes.
\end{lemma}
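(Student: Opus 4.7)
The plan is to reduce, via a uniform tubular neighborhood and excision, the pair $(X_n, X_{n-1})$ to the model pair $(V_n, W_n)$ handled in Lemma \ref{H(V,W)}. I will describe the cohomological argument; the homological case proceeds identically, using Theorem \ref{homotopy invariance homology} and the homological form of excision.

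First, I would build a uniform open neighborhood of $X_{n-1}$ in $X_n$ by setting
\[
  A \;=\; X_{n-1} \;\cup\; \bigcup_{\lambda \in \Lambda_n} \phi_\lambda\bigl(\{x \in D^{n} : 1/2 < \|x\| \le 1\}\bigr).
\]
Condition (2) of Definition \ref{uniform CW complex} (equicontinuity of $\{\phi_\lambda^{-1}\}_\lambda$) supplies an $\epsilon > 0$ with $B_\epsilon(X_{n-1}) \subset A$, and radial retraction inside each cell, extended by the identity on $X_{n-1}$, produces a uniform homotopy that deformation-retracts $A$ onto $X_{n-1}$. Combining Theorem \ref{homotopy invariance cohomology} with the long exact sequence of Theorem \ref{cohomology exact sequence} then yields $H^*_\mathrm{ub}(X_n, X_{n-1}; \underline{A}) \cong H^*_\mathrm{ub}(X_n, A; \underline{A})$, and Theorem \ref{excision cohomology}, applied with $Y = A$ and $Z = X_{n-1}$, gives
\[
  H^*_\mathrm{ub}(X_n, A; \underline{A}) \;\cong\; H^*_\mathrm{ub}(X_n - X_{n-1}, A - X_{n-1}; \underline{A}).
\]

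The crux, and what I expect to be the main obstacle, is identifying this excised pair with $(V_n, W_n)$. The characteristic maps assemble into a bijection $\Phi \colon V_n \to X_n - X_{n-1}$, $(\lambda, x) \mapsto \phi_\lambda(x)$, which carries $W_n$ onto $(X_n - X_{n-1}) - \Lambda_n$. By condition (2) of Definition \ref{uniform CW complex} together with the uniform discreteness of $\Lambda_n$, the map $\Phi$ is uniformly continuous and effectively proper; however, its inverse is \emph{not} uniformly continuous in general, since two points lying in distinct open cells can approach one another in $X_n$ as they approach $X_{n-1}$. The resolution will be to compose $\Phi$ with the uniform homotopy equivalence of pairs $(V_n, W_n) \simeq (V_n, \Lambda_n \times \{1/2 < \|x\| < 1\})$ obtained from radial retraction in each cell, and to observe that the restriction of $\Phi$ to the closed inner half-disks $\Lambda_n \times \{\|x\| \le 1/2\}$ is biLipschitz onto its image: two points in distinct cells lying in these inner half-disks must be separated in $X$ by a positive constant, because any connecting path must traverse the outer annulus of each cell (condition (3) of Definition \ref{uniform CW complex}, together with inverse equicontinuity, quantifies this separation). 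A bookkeeping argument using Lemma \ref{EPUC uniform} then shows that uniform families of singular simplices on both sides correspond under $\Phi$, giving a natural isomorphism
\[
  H^*_\mathrm{ub}(X_n - X_{n-1}, A - X_{n-1}; \underline{A}) \;\cong\; H^*_\mathrm{ub}(V_n, W_n; \underline{A}).
\]
Combining this with Lemma \ref{H(V,W)} completes the cohomological computation. Naturality with respect to effectively proper uniformly continuous cellular maps is automatic, as such maps preserve the skeletal filtration, transport characteristic maps cell-wise, and hence commute with the tubular neighborhood $A$, the radial retraction, the excision, and the identification via $\Phi$.
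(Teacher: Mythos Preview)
Your proposal is correct and follows essentially the same route as the paper's proof: enlarge $X_{n-1}$ to a uniform neighborhood, apply excision, and identify the excised pair with the model $(V_n,W_n)$ of Lemma \ref{H(V,W)}. The only cosmetic difference is the choice of neighborhood---the paper takes $U=X_n-\{\phi_\lambda(0)\}_{\lambda\in\Lambda_n}$ rather than your collar $A$, which makes $U-X_{n-1}$ match $W_n$ directly and lets the paper pass (somewhat tersely) to $X_n-N_\epsilon(X_{n-1})$ for the biLipschitz identification, exactly the inner-half-disk restriction you spell out more carefully.
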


\begin{proof}
  We compute cohomology; the homological case is analogous. Let $\{\phi_\lambda\colon D^{n_\lambda}\to X\}_{\lambda\in\Lambda}$ be the characteristic maps of $X$, and let $U=X_n-\{\phi_\lambda(0)\}_{\lambda\in\Lambda}$. Analogously to Lemma \ref{continuity}, we verify that the inclusion $X_{n-1}\to U$ is an effectively proper uniform homotopy equivalence. Since the family $\{\phi_\lambda^{-1}\}_{\lambda \in \Lambda}$ is equicontinuous, there exists $\epsilon>0$ such that $B_\epsilon(X_{n-1})\subset U$. Hence, by Theorems \ref{cohomology exact sequence} and \ref{excision cohomology},
  \[
    H^*_\mathrm{ub}(X_n,X_{n-1};\underline{A})\cong H^*_\mathrm{ub}(X_n,U;\underline{A})\cong H^*_\mathrm{ub}(X_n-X_{n-1},U-X_{n-1};\underline{A}).
  \]
  There are biLipschitz homeomorphisms
  \[
    X_n-N_\epsilon(X_{n-1})\cong\Lambda_n\times\mathrm{Int}(D^n)\quad\text{and}\quad U-N_\epsilon(X_{n-1})\cong\Lambda_n\times\mathrm{Int}(D^n-0)
  \]
  where $\Lambda_n$ is identified with the uniformly discrete set of centers $\{\phi_\lambda(0)\}_{\lambda\in\Lambda_n}$, and the metrics are as in Lemma \ref{H(V,W)}. The result follows from Lemma \ref{H(V,W)}.
\end{proof}

\begin{definition}
  Let $X$ be a uniform CW complex. The \emph{uniform bounded cellular (co)chain complex} with coefficients in a pseudo-normed abelian group $\underline{A}$ is defined by
  \[
    \mathrm{Cell}^n_\mathrm{ub}(X;\underline{A})=H^n_\mathrm{ub}(X_n,X_{n-1};\underline{A})\quad\text{and}\quad \mathrm{Cell}_n^\mathrm{ub}(X;\underline{A})=H_n^\mathrm{ub}(X_n,X_{n-1};\underline{A})
  \]
  with (co)boundary maps
  \[
    \delta\colon\mathrm{Cell}^n_\mathrm{ub}(X;\underline{A})\to\mathrm{Cell}^{n+1}_\mathrm{ub}(X;\underline{A})\quad\text{and}\quad\partial\colon\mathrm{Cell}_{n+1}^\mathrm{ub}(X;\underline{A})\to\mathrm{Cell}_{n}^\mathrm{ub}(X;\underline{A})
  \]
  induced by the (co)boundary maps in the long exact sequence of the triple $(X_{n+1},X_{n},X_{n-1})$.
\end{definition}

By construction, uniform bounded cellular (co)homology is natural with respect to effectively proper uniformly continuous cellular maps between uniform CW complexes. Analogously to the classical case, we obtain the following.

\begin{theorem}
  Let $X$ be a uniform CW complex. Then there are natural isomorphisms
  \[
    H^*_\mathrm{ub}(X;\underline{A})\cong H^*(\mathrm{Cell}^*_\mathrm{ub}(X;\underline{A}))\quad\text{and}\quad H_*^\mathrm{ub}(X;\underline{A})\cong H_*(\mathrm{Cell}_*^\mathrm{ub}(X;\underline{A})),
  \]
  natural with respect to effectively proper uniformly continuous cellular maps between uniform CW complexes.
\end{theorem}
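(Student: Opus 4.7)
The plan is to mirror the classical proof that cellular cohomology agrees with singular cohomology, substituting each classical ingredient with its uniform bounded counterpart developed in this paper. I treat the cohomological statement; the homological case is completely analogous via Theorem \ref{homology exact sequence} and the homological half of Lemma \ref{cellular}.

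By induction on $n$, using the long exact sequence of the pair $(X_n,X_{n-1})$ (Theorem \ref{cohomology exact sequence}) together with the vanishing in Lemma \ref{cellular}, I first establish that $H^i_\mathrm{ub}(X_n;\underline{A})=0$ for $i>n$ and that the restriction $H^i_\mathrm{ub}(X_n;\underline{A})\to H^i_\mathrm{ub}(X_{n-1};\underline{A})$ is an isomorphism for $i\le n-2$ and injective for $i=n-1$. The base case is $X_0$, a uniformly discrete set, whose uniform bounded cohomology is computed directly from the definition.

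Chaining the long exact sequences of $(X_n,X_{n-1})$ and $(X_{n+1},X_n)$, I identify the cellular coboundary $\delta$ with the composite $H^n_\mathrm{ub}(X_n,X_{n-1};\underline{A})\to H^n_\mathrm{ub}(X_n;\underline{A})\to H^{n+1}_\mathrm{ub}(X_{n+1},X_n;\underline{A})$, where the first map comes from the long exact sequence of the pair $(X_n,X_{n-1})$ and the second is the connecting homomorphism of $(X_{n+1},X_n)$. A diagram chase verbatim from the classical case then yields a natural isomorphism $H^n_\mathrm{ub}(X_{n+1};\underline{A})\cong H^n(\mathrm{Cell}^*_\mathrm{ub}(X;\underline{A}))$, and naturality with respect to effectively proper uniformly continuous cellular maps is inherited from the naturality of the long exact sequences.

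It remains to compare $H^n_\mathrm{ub}(X_{n+1};\underline{A})$ with $H^n_\mathrm{ub}(X;\underline{A})$. For finite-dimensional uniform CW complexes, which cover the main geometric applications of this paper through Example \ref{triangulation}, this is immediate once $n$ exceeds the dimension. The general (possibly infinite-dimensional) case is where I expect the main difficulty: one cannot invoke the classical compact-support argument, since uniform bounded cochains need not be supported on any finite subcomplex. The natural tool is the spectral sequence of the decreasing filtration $F^p C^*_\mathrm{ub}(X;\underline{A})=\ker(C^*_\mathrm{ub}(X;\underline{A})\to C^*_\mathrm{ub}(X_{p-1};\underline{A}))$; extension by zero identifies $F^p/F^{p+1}\cong C^*_\mathrm{ub}(X_p,X_{p-1};\underline{A})$, and Lemma \ref{cellular} then concentrates $E_1$ on the row $q=0$ with $d_1$ equal to the cellular coboundary, so the sequence degenerates at $E_2$ onto $H^*(\mathrm{Cell}^*_\mathrm{ub}(X;\underline{A}))$. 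The filtration is Hausdorff because every singular simplex has compact image contained in some finite subcomplex, but establishing the completeness required for strong convergence to $H^*_\mathrm{ub}(X;\underline{A})$ in the uniform bounded setting is the technical heart of the argument and the main obstacle.
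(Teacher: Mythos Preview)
Your approach is correct and is precisely what the paper intends: the paper gives no proof, stating only that the result follows ``analogously to the classical case,'' and your outline of the long exact sequence diagram chase is exactly that classical argument.

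The ``main obstacle'' you identify, however, does not arise: every uniform CW complex is automatically finite-dimensional. Indeed, equicontinuity of $\{\phi_\lambda\}$ (condition (2) of Definition~\ref{uniform CW complex}) yields a uniform bound $R$ on the diameter of every closed cell $\bar e_\lambda$. If $e_\lambda$ has dimension $n$, then $B_R(\phi_\lambda(0))$ contains all of $\bar e_\lambda$; since the decomposition is regular, $\partial\bar e_\lambda$ is a subcomplex homeomorphic to $S^{n-1}$, and any regular CW structure on $S^{n-1}$ carries a cell of each dimension $0,\ldots,n-1$ (there must be an $(n-1)$-cell by homology, and then regularity plus induction supplies the lower ones). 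Hence $B_R(\phi_\lambda(0))$ meets at least $n+1$ cells, so condition (1) forces $n\le K_R-1$. Thus $X=X_m$ for some finite $m$, your restriction isomorphisms give $H^n_\mathrm{ub}(X;\underline{A})\cong H^n_\mathrm{ub}(X_{n+1};\underline{A})$ directly, and no spectral sequence or convergence argument is needed.
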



\section{Poincar\'e duality}\label{Poincare duality section}

In this section, we establish the Poincar\'e duality for the uniform bounded (co)homology of an oriented connected complete Riemannian manifold of bounded geometry.

We begin by defining the cap product at the level of uniform bounded (co)chains. A pseudo-normed ring $\underline{R}$ is a ring $R$ with a function $|\cdot|\colon R\to\R_{\ge 0}$ satisfying
\[
  |0|=0,\quad|a+b|\le|a|+|b|,\quad|ab|\le|a||b|
\]
for $a,b\in R$. A basic example is $\underline{\Z}$, the integers with the absolute value. Let $X$ be a metric space. Denote by $e_0,\ldots,e_q$ the standard basis of $\R^{q+1}$, and recall that $\Delta^q$ is the convex hull of these points. For $0\le i_1<\cdots<i_k\le q$, we write $[i_1,\ldots,i_k]$ for the face of $\Delta^q$ spanned by $e_{i_1},\ldots,e_{i_k}\in\R^{q+1}$. Let $u\in C_\mathrm{ub}^p(X;\underline{R})$ and $c=\sum_{i\in I}a_i\sigma_i\in C_q^\mathrm{ub}(X;\underline{R})$. We define the cap product
\[
  u\frown c=(-1)^{p(q-p)}\sum_{i\in I}a_iu(\sigma_i\vert_{[q-p,q-p+1,\ldots,q]})\sigma_i\vert_{[0,1,\ldots,q-p]}.
\]
It is straightforward to verify that $u\frown c$ is again an element of $C^\mathrm{ub}_{q-p}(X;\underline{R})$ and that the boundary operator satisfies
\begin{equation}
  \label{cap product d}
  \partial(u\frown c)=(\delta u)\frown c+(-1)^pu\frown(\partial c).
\end{equation}
Consequently, we obtain a well-defined pairing
\[
  \frown\colon H^p_\mathrm{ub}(X;\underline{R})\otimes H_q^\mathrm{ub}(X;\underline{R})\to H_{q-p}^\mathrm{ub}(X;\underline{R}),\quad[u]\otimes[c]\mapsto[u]\frown[c]=[u\frown c].
\]
For a subspace $A\subset X$, the cap product extends naturally to
\[
  \frown\colon H^p_\mathrm{ub}(X;\underline{R})\otimes H_q^\mathrm{ub}(X,A;\underline{R})\to H_{q-p}^\mathrm{ub}(X,A;\underline{R})
\]
and, clearly, it restricts to the $\epsilon$-controlled complexes as
\[
  \frown\colon H^p_{\mathrm{ub},\epsilon}(X;\underline{R})\otimes H_q^{\mathrm{ub},\epsilon}(X,A;\underline{R})\to H_{q-p}^{\mathrm{ub},\epsilon}(X,A;\underline{R}).
\]

By Lemma \ref{H(V,W)}, we have
\[
  H^*_\mathrm{ub}(\mathrm{Int}(D^n))\cong
  \begin{cases}
    \Z&(*=0)\\
    0&(*\ne 0)
  \end{cases}
\]
and
\[
  H_*^\mathrm{ub}(\mathrm{Int}(D^n),\mathrm{Int}(D^n-0))\cong
  \begin{cases}
    \Z&(*=n)\\
    0&(*\ne n).
  \end{cases}
\]
Moreover, the proof of Lemma \ref{H(V,W)} shows that $H_n^\mathrm{ub}(\mathrm{Int}(D^n),\mathrm{Int}(D^n-0))\cong\Z$ is generated by the class represented by the composite
\[
  \Delta^n\cong D_\frac{1}{2}^n\xrightarrow{\mathrm{incl}}\mathrm{Int}(D^n)
\]
where $D_\frac{1}{2}^n\subset D^n$ denotes the $n$-disk of radius $\frac{1}{2}$. We denote this class by $[\mathrm{Int}(D^n)]$. For $V_n$ in Lemma \ref{H(V,W)}, define
\[
  [V_n]=\left[\sum_{\lambda\in\Lambda}(\lambda,\mathrm{Int}(D_\lambda^n))\right].
\]

\begin{lemma}
  \label{local Poincare duality}
  Let $V_n$ and $W_n$ be as in Lemma \ref{H(V,W)}. The map
  \[
    H^p_\mathrm{ub}(\Lambda\times\mathrm{Int}(D^n))\to H_{n-p}^\mathrm{ub}(\Lambda\times(\mathrm{Int}(D^n),\mathrm{Int}(D^n-0))),\quad u\mapsto u\frown[V_n]
  \]
  is an isomorphism for all $p\ge 0$.
\end{lemma}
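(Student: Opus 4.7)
My plan is to reduce the claim to the single degree $p=0$ via the vanishing already established in Lemma \ref{H(V,W)}, and then to verify the isomorphism at $p=0$ by a direct chain-level computation of the cap product.

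From the proof of Lemma \ref{H(V,W)} we have $H^p_\mathrm{ub}(V_n)\cong\ell^\infty(\Lambda)$ for $p=0$ and $0$ otherwise, while $H_{n-p}^\mathrm{ub}(V_n,W_n)\cong\ell^\infty(\Lambda)$ when $n-p=n$ (i.e.\ $p=0$) and $0$ otherwise. Hence for every $p\ne 0$, both source and target vanish and the statement is automatic. It remains to analyze the case $p=0$, in which case both groups are identified with $\ell^\infty(\Lambda)$.

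For $p=0$, I would first describe the isomorphism $H^0_\mathrm{ub}(V_n)\cong\ell^\infty(\Lambda)$ concretely: a bounded $0$-cocycle $u$ on $V_n$ is necessarily constant on each connected component $\{\lambda\}\times\mathrm{Int}(D^n)$, so it corresponds to the function $\phi\colon\Lambda\to\Z$ defined by $\phi(\lambda)=u(x)$ for any $x\in\{\lambda\}\times\mathrm{Int}(D^n)$. Writing $[V_n]=\bigl[\sum_{\lambda\in\Lambda}\sigma_\lambda\bigr]$ with $\sigma_\lambda\colon\Delta^n\cong D^n_\lambda\hookrightarrow\{\lambda\}\times\mathrm{Int}(D^n)$, the cap-product formula yields, at the chain level,
\[
u\frown\Bigl(\sum_{\lambda\in\Lambda}\sigma_\lambda\Bigr)=\sum_{\lambda\in\Lambda}u\bigl(\sigma_\lambda|_{[n]}\bigr)\,\sigma_\lambda=\sum_{\lambda\in\Lambda}\phi(\lambda)\,\sigma_\lambda,
\]
since each last vertex $\sigma_\lambda|_{[n]}$ lies in $\{\lambda\}\times\mathrm{Int}(D^n)$. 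Thus, cap product with $[V_n]$ realizes the assignment $\phi\mapsto\bigl[\sum_{\lambda\in\Lambda}\phi(\lambda)\sigma_\lambda\bigr]$.

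The only remaining bookkeeping step, and the main obstacle, is to confirm that this assignment agrees with the isomorphism $H_n^\mathrm{ub}(V_n,W_n)\cong\ell^\infty(\Lambda)$ built by induction in the proof of Lemma \ref{H(V,W)}; equivalently, that the inductive isomorphism sends $\bigl[\sum_\lambda\phi(\lambda)\sigma_\lambda\bigr]$ back to $\phi$. I would verify this by tracing the induction over $n$: for $n=1$ the identification is explicit through the biLipschitz splitting $W_1\cong V_1\sqcup V_1$ and the connecting map in the long exact sequence of $(V_1,W_1)$; for the inductive step, the Mayer--Vietoris connecting homomorphism from the cover $(\Lambda\times U_+)\cup(\Lambda\times U_-)$ acts componentwise over $\Lambda$ and commutes with $\ell^\infty(\Lambda)$-scaling of representing cycles, so that the generator in degree $n+1$ assigned to $\phi$ is sent, by the connecting map, to the generator in degree $n$ assigned to the same $\phi$. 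Once this compatibility is in hand, the cap product with $[V_n]$ is precisely the identity on $\ell^\infty(\Lambda)$, and the desired isomorphism follows.
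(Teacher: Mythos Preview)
Your proposal is correct and follows essentially the same approach as the paper: reduce to $p=0$ via the vanishing in Lemma~\ref{H(V,W)}, then verify that the cap product is an isomorphism there. The paper shortcuts your step~3 by invoking the classical single-disc Poincar\'e duality $H^p(\mathrm{Int}(D^n))\xrightarrow{\;\frown[\mathrm{Int}(D^n)]\;}H_{n-p}(\mathrm{Int}(D^n),\mathrm{Int}(D^n-0))$ and observing that the cap product with $[V_n]=\sum_\lambda[\mathrm{Int}(D^n_\lambda)]$ decomposes componentwise over $\Lambda$, which yields the identity on $\ell^\infty(\Lambda)$ without the need to trace through the inductive construction.
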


\begin{proof}
  It is classical that the map
  \[
    H^p_\mathrm{ub}(\mathrm{Int}(D^n))\to H_{n-p}^\mathrm{ub}(\mathrm{Int}(D^n),\mathrm{Int}(D^n-0)),\quad u\mapsto u\frown[\mathrm{Int}(D^n)]
  \]
  is an isomorphism for all $p\ge 0$. Therefore, the claim follows from Lemma \ref{H(V,W)} together with \eqref{V_n}.
\end{proof}

For the rest of this section, let $M$ be an oriented connected complete Riemannian $n$-dimensional manifold of bounded geometry. By Example \ref{triangulation}, $M$ is biLipschitz homeomorphic to a uniform simplicial complex. Hence, when dealing with uniform bounded (co)homology, we may identify $M$ with its triangulating uniform simplicial complex. Fix a sufficiently small $\epsilon>0$, and for a subset $A\subset M$, let $N_\epsilon(A)$ denote the $\epsilon$-neighborhood of $A$. Fix an ordering $\sigma_1<\sigma_2<\cdots$ of $n$-simplices of $M$. For $\alpha_0,\ldots,\alpha_p\in\N$, set
\[
  \sigma_{\alpha_0\cdots\alpha_p}=
  \begin{cases}
    \sigma_{\alpha_0}\cap\cdots\cap\sigma_{\alpha_p}&(\dim\sigma_{\alpha_0}\cap\cdots\cap\sigma_{\alpha_p}=n-p)\\
    \emptyset&(\text{otherwise}).
  \end{cases}
\]
For each $p\ge 0$, define
\[
  K_{p,q}=C_{\mathrm{ub},\epsilon}^q\left(\coprod_{\alpha_0<\cdots<\alpha_p}N_\epsilon(\sigma_{\alpha_0\cdots\alpha_p})\right).
\]
For a cochain $u$, let $u_{\alpha_0\cdots\alpha_p}$ denote its restriction to $N_\epsilon(\sigma_{\alpha_0\cdots\alpha_p})$, and impose
\[
  u_{\alpha_0\cdots\alpha_i\cdots\alpha_j\cdots\alpha_p}=-u_{\alpha_0\cdots\alpha_j\cdots\alpha_i\cdots\alpha_p}.
\]
Thus $K_{p,q}\subset\prod_{\alpha_0,\ldots,\alpha_p}C_{\mathrm{ub},\epsilon}^q(N_\epsilon(\sigma_{\alpha_0\cdots\alpha_p}))$, and every $u\in K_{p,q}$ is completely determined by the collection $\{u_{\alpha_0\cdots\alpha_p}\}_{\alpha_0,\ldots,\alpha_p}$.

Define $d\colon K_{p,q}\to K_{p+1,q}$ by setting, for $u\in K_{p+1,q}$,
\[
  (du)_{\alpha_0\cdots\alpha_{p+1}}=\left(\sum_{i=0}^{p+1}(-1)^iu_{\alpha_0\cdots\hat{\alpha}_i\cdots\alpha_{p+1}}\right)_{\alpha_0\cdots\alpha_{p+1}}
\]
and let $\delta\colon K_{p,q}\to K_{p,q+1}$ be the coboundary operator of uniform bounded cochain complexes.

\begin{lemma}
  \label{double complex 1}
  The triple $(K,d,\delta)$ forms a double complex.
\end{lemma}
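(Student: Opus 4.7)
My plan is to verify the three defining conditions of a double complex: $\delta^2=0$, $d^2=0$, and the compatibility between $d$ and $\delta$. The identity $\delta^2=0$ is immediate: on each factor $N_\epsilon(\sigma_{\alpha_0\cdots\alpha_p})$, the operator $\delta$ is simply the coboundary of the uniform bounded cochain complex $C^*_{\mathrm{ub},\epsilon}$, which has already been shown to be a cochain complex in Section \ref{Uniform bounded cohomology}. The subgroup $K_{p,q}\subset\prod_{\alpha_0,\ldots,\alpha_p}C^q_{\mathrm{ub},\epsilon}(N_\epsilon(\sigma_{\alpha_0\cdots\alpha_p}))$ is clearly preserved by the componentwise $\delta$ (with the antisymmetry condition in the indices $\alpha_0,\ldots,\alpha_p$ being unaffected by $\delta$), so $\delta^2=0$ holds on $K_{p,*}$.

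For $d^2=0$, I would unfold $(d^2u)_{\alpha_0\cdots\alpha_{p+2}}$ into a double alternating sum indexed by pairs of omitted indices $0\le i<j\le p+2$. Each such pair is obtained in exactly two ways (remove $\alpha_i$ first then $\alpha_j$, or vice versa), and combining the signs with the imposed antisymmetry $u_{\cdots\alpha_i\cdots\alpha_j\cdots}=-u_{\cdots\alpha_j\cdots\alpha_i\cdots}$ gives the standard cancellation familiar from the \v Cech coboundary. Before this computation is meaningful I must also check that $du\in K_{p+1,q}$, i.e.\ that the restriction of a uniform bounded $q$-cochain on $N_\epsilon(\sigma_{\alpha_0\cdots\hat\alpha_i\cdots\alpha_{p+1}})$ to the smaller open set $N_\epsilon(\sigma_{\alpha_0\cdots\alpha_{p+1}})$ remains uniform bounded; but this is a special case of Proposition \ref{induced map} applied to the inclusion, which is effectively proper and uniformly continuous by Example \ref{EPUC example}(1).

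For the compatibility $d\delta=\delta d$, the point is that $d$ is a signed sum of pullbacks along inclusions of open sets, while $\delta$ is the ordinary singular coboundary, which is natural with respect to continuous maps. Hence on each factor the two operators commute, and assembling the components with signs shows that the whole operators commute on $K_{*,*}$. Under the convention in which a bicomplex has strictly commuting differentials, this is what is needed; if one prefers the anticommuting convention, one replaces $\delta$ on $K_{p,q}$ by $(-1)^p\delta$.

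I do not expect a genuine obstacle in this lemma; the calculation is essentially the same as for the \v Cech--singular double complex, and the only nonformal content is to keep track that every map stays inside the uniform bounded category, which is the content of Example \ref{EPUC example}(1) together with Proposition \ref{induced map}. The main real work is hidden in the antisymmetry convention and in verifying that restrictions along the inclusions preserve uniform families of simplices, both of which have been set up in the preceding sections.
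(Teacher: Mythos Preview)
Your proposal is correct and follows essentially the same approach as the paper's proof: the paper verifies $d^2=0$ by the same standard alternating-sum cancellation, observes that $d\delta=\delta d$ because restriction of cochains commutes with the singular coboundary, and notes $\delta^2=0$. Your write-up is slightly more careful in explicitly checking that $du\in K_{p+1,q}$ via Example~\ref{EPUC example}(1) and Proposition~\ref{induced map}, but this is a minor elaboration rather than a different route.
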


\begin{proof}
  For $u\in K_{p,q}$,
  \begin{align*}
    (d^2u)_{\alpha_0\cdots\alpha_{p+2}}&=\left(\sum_{i=0}^{p+2}(-1)^i(du)_{\alpha_0\cdots\hat{\alpha}_i\cdots\alpha_{p+2}}\right)_{\alpha_0\cdots\alpha_{p+2}}\\
    &=\left(\sum_{i=0}^{p+2}\sum_{j<i}(-1)^{i+j}u_{\alpha_0\cdots\hat{\alpha}_j\cdots\hat{\alpha}_i\cdots\alpha_{p+2}}+\sum_{i<j}(-1)^{i+j-1}u_{\alpha_0\cdots\hat{\alpha}_i\cdots\hat{\alpha}_j\cdots\alpha_{p+2}}\right)_{\alpha_0\cdots\alpha_{p+2}}\\
    &=0.
  \end{align*}
  Since the restriction of cochains commutes with $\delta$, we have $d\delta=\delta d$. Finally, $\delta^2=0$. Hence $(K,d,\delta)$ is a double complex.
\end{proof}

The restriction maps $C_{\mathrm{ub},\epsilon}^*(M)\to C^*_{\mathrm{ub},\epsilon}(N_\epsilon(\sigma_\alpha))$ for $\alpha\ge 1$ assemble into a cochain map
\[
  \theta\colon C_{\mathrm{ub},\epsilon}^*(M)\to K_{0,*}
\]

\begin{lemma}
  \label{exact 1}
  The sequence of cochain complexes
  \[
    0\to  C_{\mathrm{ub},\epsilon}^*(M)\xrightarrow{\theta}K_{0,*}\xrightarrow{d}K_{1,*}\xrightarrow{d}\cdots
  \]
  is exact.
\end{lemma}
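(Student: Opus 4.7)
I would prove this as the standard augmented Čech exactness for the cover of $M$ by the $\epsilon$-neighborhoods of its top-dimensional simplices, adapted to the uniformly bounded $\epsilon$-small cochain setting. The argument splits into three parts carried out degree-by-degree: (a) injectivity of $\theta$, (b) exactness at $K_{0,*}$, and (c) exactness at $K_{p,*}$ for $p \ge 1$. Throughout, I will invoke two geometric facts granted by bounded geometry and the sufficient smallness of $\epsilon$ fixed at the beginning of the section: every singular simplex $\tau$ of diameter $<\epsilon$ in $M$ meets some top-dimensional simplex $\sigma_\alpha$ (since these cover $M$) and hence lies in $N_\epsilon(\sigma_\alpha)$; and if $\tau$ lies in both $N_\epsilon(\sigma_\alpha)$ and $N_\epsilon(\sigma_\beta)$, then in fact $\tau\subset N_\epsilon(\sigma_{\alpha\beta})$, because any point of $M$ within $\epsilon$ of two closed $n$-simplices is within $C\epsilon$ of their common face (bounded geometry), and we may shrink $\epsilon$ to absorb $C$.

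Parts (a) and (b) are immediate from these facts. For injectivity: if $u|_{N_\epsilon(\sigma_\alpha)}=0$ for all $\alpha$, then $u(\tau)=0$ for every simplex $\tau$ of diameter $<\epsilon$ since such a $\tau$ lies in some $N_\epsilon(\sigma_\alpha)$. For exactness at $K_{0,*}$: given $\{u_\alpha\}$ with $du=0$, i.e.\ $u_\alpha=u_\beta$ on $N_\epsilon(\sigma_{\alpha\beta})$, glue to a global cochain $\tilde u$ on $M$ by $\tilde u(\tau):=u_\alpha(\tau)$ for any $\alpha$ with $\tau\subset N_\epsilon(\sigma_\alpha)$; independence of the choice of $\alpha$ is the cocycle identity together with the second geometric fact, and uniform boundedness of $\tilde u$ is inherited from the uniform boundedness of $\{u_\alpha\}$ on the disjoint union.

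The main work is part (c), for which I would construct a contracting chain homotopy $h\colon K_{p,*}\to K_{p-1,*}$ satisfying $dh+hd=\mathrm{id}$. Fix once and for all a choice function $\tau\mapsto\alpha(\tau)$ assigning to each singular simplex $\tau$ of diameter $<\epsilon$ an $n$-simplex $\sigma_{\alpha(\tau)}$ that $\tau$ meets (for instance, the simplex of smallest index containing the image of the barycenter of $\Delta^q$). Define
\[
  (hu)_{\alpha_0\cdots\alpha_{p-1}}(\tau) := u_{\alpha(\tau)\,\alpha_0\cdots\alpha_{p-1}}(\tau),
\]
interpreted by antisymmetry in the indices and set to $0$ on multi-indices for which $\sigma_{\alpha(\tau)\,\alpha_0\cdots\alpha_{p-1}}$ is empty. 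Expanding using the alternating-sum formula for $d$ gives
\[
  (dhu+hdu)_{\alpha_0\cdots\alpha_p}(\tau)=\sum_{i=0}^p(-1)^i u_{\alpha(\tau)\alpha_0\cdots\hat\alpha_i\cdots\alpha_p}(\tau)+(du)_{\alpha(\tau)\alpha_0\cdots\alpha_p}(\tau)=u_{\alpha_0\cdots\alpha_p}(\tau),
\]
a telescoping cancellation whose single surviving term is the desired one; this gives exactness at $K_{p,*}$ for $p\ge 1$.

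The hard part is ensuring that $h$ actually defines a map into $K_{p-1,*}$, and here two technical verifications are required. First, $\tau$ must lie in the domain $N_\epsilon(\sigma_{\alpha(\tau)\,\alpha_0\cdots\alpha_{p-1}})$ whenever that set is non-empty, so that the right-hand side of the definition makes sense. This follows from a bounded-geometry estimate: since $\tau$ meets $\sigma_{\alpha(\tau)}$ and lies in $N_\epsilon(\sigma_{\alpha_0\cdots\alpha_{p-1}})$, any point of $\tau\cap\sigma_{\alpha(\tau)}$ sits within $\epsilon$ of $\sigma_{\alpha_0\cdots\alpha_{p-1}}$, and on a simplicial manifold of bounded geometry such a point is forced to lie within $C\epsilon$ of the codimension-one subface $\sigma_{\alpha(\tau)}\cap\sigma_{\alpha_0\cdots\alpha_{p-1}}$; further shrinkage of $\epsilon$ then places $\tau$ in the required neighborhood. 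Second, one must verify $hu$ is uniformly bounded on uniform families of simplices in $\coprod N_\epsilon(\sigma_{\alpha_0\cdots\alpha_{p-1}})$; this uses the uniform combinatorial bounds of the underlying uniform simplicial complex (each small simplex meets only boundedly many $n$-simplices, with a bound depending only on $M$), which ensure that the choice function $\tau\mapsto\alpha(\tau)$ carries uniform families to uniform families, so that uniform boundedness transfers from $u$ to $hu$.
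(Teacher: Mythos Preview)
Your approach is essentially the paper's: both establish exactness by constructing a contracting homotopy $K_{p,*}\to K_{p-1,*}$ from a choice function on $\epsilon$-small singular simplices---the paper's operators $\rho_\alpha$ amount to the rule ``smallest $\alpha$ with $\tau\subset N_\epsilon(\sigma_\alpha)$'', while you take ``the $n$-simplex containing the barycenter''---and then verify $dh+hd=\mathrm{id}$ by the same telescoping computation.

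One caveat worth flagging: recall that the paper sets $\sigma_{\alpha_0\cdots\alpha_p}=\emptyset$ unless $\dim(\sigma_{\alpha_0}\cap\cdots\cap\sigma_{\alpha_p})=n-p$ exactly. Your ``second geometric fact'' therefore fails as stated when two $n$-simplices meet only in a face of dimension $<n-1$ (then $\sigma_{\alpha\beta}=\emptyset$ even though $N_\epsilon(\sigma_\alpha)\cap N_\epsilon(\sigma_\beta)\ne\emptyset$), and the analogous issue recurs in your domain check for part (c). This is not fatal---for (b) one chains through the $n$-simplices in the star of the common face, and for (c) one keeps track of which multi-indices actually vanish under the dimension convention---but the argument as written glosses over it. The paper's choice function (smallest $\alpha$ with $\tau\subset N_\epsilon(\sigma_\alpha)$, encoded via $\rho_\alpha$) sidesteps some of this bookkeeping more cleanly than your barycenter rule.
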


\begin{proof}
  The exactness of $0\to  C_\epsilon^q(M)\xrightarrow{\theta}K_{0,q}\xrightarrow{d}K_{1,q}$ is clear. For $u\in C^q_{\mathrm{ub,\epsilon}}(N_\epsilon(\sigma_{\alpha_0\cdots\alpha_p}))$ and each $\alpha\in\N$, define $\rho_\alpha(u)\in C^q_{\mathrm{ub,\epsilon}}(M)$ by
  \[
    \rho_\alpha(u)(\tau)=
    \begin{cases}
      u(\tau)&(\tau\subset N_\epsilon(\sigma_\alpha)\cap N_\epsilon(\sigma_{\alpha_0\cdots\alpha_p})\quad\text{and}\quad\tau\not\subset N_\epsilon(\sigma_1)\cup\cdots\cup N_\epsilon(\sigma_{\alpha-1}))\\
      0&(\text{otherwise})
    \end{cases}
  \]
  for every singular $q$-simplex $\tau$ in $M$ with $\diam(\tau)<\epsilon$. For every singular simplex $\tau$ in $M$ with $\diam(\tau)<\epsilon$, there exists $\alpha\in\N$ such that $\tau\subset N_\epsilon(\sigma_\alpha)$. Hence there is a unique $\alpha\in\N$ such that $\tau\subset N_\epsilon(\sigma_\alpha)$ and $\tau\not\subset N_\epsilon(\sigma_1)\cup\cdots\cup N_\epsilon(\sigma_{\alpha-1})$. Consequently,
  \[
    \left(\sum_\alpha\rho_\alpha(u)\right)_{\alpha_0\cdots\alpha_p}=u.
  \]
  For $p\ge 1$, let $u\in K_{p,q}$. Define a map $H\colon K_{p,q}\to K_{p-1,q}$ by
  \[
    (Hu)_{\alpha_0\cdots\alpha_{p-1}}=\left(\sum_\alpha\rho_\alpha(u_{\alpha\alpha_0\cdots\alpha_{p-1}})\right)_{\alpha_0\cdots\alpha_{p-1}}.
  \]
  Then we compute
  \begin{align*}
    (dHu)_{\alpha_0\cdots\alpha_p}&=\left(\sum_{i=0}^p(-1)^i(Hu)_{\alpha_0\cdots\hat{\alpha}_i\cdots\alpha_p}\right)_{\alpha_0\cdots\alpha_p}\\
    &=\left(\sum_{i=0}^p(-1)^i\sum_\alpha\rho_\alpha(u_{\alpha\alpha_0\cdots\hat{\alpha}_i\cdots\alpha_p})\right)_{\alpha_0\cdots\alpha_p}
  \end{align*}
  and
  \begin{align*}
    (Hdu)_{\alpha_0\cdots\alpha_p}&=\left(\sum_\alpha\rho_\alpha((du)_{\alpha\alpha_0\cdots\alpha_p})\right)_{\alpha_0\cdots\alpha_p}\\
    &=\left(\sum_\alpha\rho_\alpha(u_{\alpha_0\cdots\alpha_p})+\sum_{i=0}^p(-1)^{i+1}\rho_\alpha(u_{\alpha\alpha_0\cdots\hat{\alpha}_i\cdots\alpha_p})\right)_{\alpha_0\cdots\alpha_p}\\
    &=u_{\alpha_0\cdots\alpha_p}-(dHu)_{\alpha_0\cdots\alpha_p}.
  \end{align*}
  Therefore $dH+Hd=1$, completing the proof.
\end{proof}

For $\alpha_0,\ldots,\alpha_p\in\N$, set
\[
  L(\sigma_{\alpha_0\cdots\alpha_p})=(N_\epsilon(\sigma_{\alpha_0\cdots\alpha_p}),N_\epsilon(\sigma_{\alpha_0\cdots\alpha_p})-\sigma_{\alpha_0\cdots\alpha_p})
\]
and define
\[
  L_{p,q}=C_q^{\mathrm{ub},\epsilon}\left(\coprod_{\alpha_0<\cdots<\alpha_p}L(\sigma_{\alpha_0\cdots\alpha_p})\right).
\]
For $c\in L_{p,q}$ and $\alpha_0,\ldots,\alpha_p\in\N$, let $c_{\alpha_0\cdots\alpha_p}$ denote the restriction of $c$ to $C_q^{\mathrm{ub},\epsilon}(L(\sigma_{\alpha_0\cdots\alpha_p}))$, and set
\[
  c_{\alpha_0\cdots\alpha_i\cdots\alpha_j\cdots\alpha_p}=-c_{\alpha_0\cdots\alpha_j\cdots\alpha_i\cdots\alpha_p}.
\]
Define $d\colon L_{p,q}\to L_{p+1,q}$ by
\[
  (dc)_{\alpha_0\cdots\alpha_{p+1}}=\sum_{i=0}^{p+1}(-1)^ic_{\alpha_0\cdots\hat{\alpha}_i\cdots\alpha_{p+1}},
\]
interpreting $c_{\alpha_0\cdots\hat{\alpha}_i\cdots\alpha_{p+1}}$ in $C_q^{\mathrm{ub},\epsilon}(L(\sigma_{\alpha_0\cdots\alpha_{p+1}}))$ through the composite
\begin{align*}
  C_q^{\mathrm{ub},\epsilon}(L(\sigma_{\alpha_0\cdots\hat{\alpha}_i\cdots\alpha_{p+1}}))&\to C_q^{\mathrm{ub},\epsilon}(N_\epsilon(\sigma_{\alpha_0\cdots\hat{\alpha}_i\cdots\alpha_{p+1}}),N_\epsilon(\sigma_{\alpha_0\cdots\hat{\alpha}_i\cdots\alpha_{p+1}})-\sigma_{\alpha_0\cdots\alpha_{p+1}})\\
  &\cong C_q^{\mathrm{ub},\epsilon}(L(\sigma_{\alpha_0\cdots\alpha_{p+1}})).
\end{align*}
Let $\partial\colon L_{p,q}\to L_{p,q-1}$ be the boundary operator. Exactly as in Lemma \ref{double complex 1}, we have:

\begin{lemma}
  \label{double complex 2}
  The triad $(L,d,\partial)$ is a double complex.
\end{lemma}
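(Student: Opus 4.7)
The plan is to mirror the argument of Lemma \ref{double complex 1}, with restriction maps replaced by the chain-level composites
\[
  C_q^{\mathrm{ub},\epsilon}(L(\sigma_{\alpha_0\cdots\hat{\alpha}_i\cdots\alpha_{p+1}}))\to C_q^{\mathrm{ub},\epsilon}(L(\sigma_{\alpha_0\cdots\alpha_{p+1}}))
\]
used in defining $d$. Concretely, I would verify the three conditions: $d^2=0$, $\partial^2=0$, and $d\partial=\partial d$.

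First I would check $d^2=0$. For $c\in L_{p,q}$, expanding
\[
  (d^2c)_{\alpha_0\cdots\alpha_{p+2}}=\sum_{i=0}^{p+2}(-1)^i(dc)_{\alpha_0\cdots\hat{\alpha}_i\cdots\alpha_{p+2}}=\sum_{i=0}^{p+2}\sum_{j<i}(-1)^{i+j}c_{\alpha_0\cdots\hat{\alpha}_j\cdots\hat{\alpha}_i\cdots\alpha_{p+2}}+\sum_{i<j}(-1)^{i+j-1}c_{\alpha_0\cdots\hat{\alpha}_i\cdots\hat{\alpha}_j\cdots\alpha_{p+2}},
\]
the two double sums cancel termwise exactly as in Lemma \ref{double complex 1}. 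Here I am implicitly using that the composites used to identify each summand inside $C_q^{\mathrm{ub},\epsilon}(L(\sigma_{\alpha_0\cdots\alpha_{p+2}}))$ are compatible when one removes two indices in either order; this is a routine check from the definitions of the relative chain groups.

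Next, $\partial^2=0$ is inherited from the boundary operator on $C_*^{\mathrm{ub},\epsilon}$, which is already a chain complex. For $d\partial=\partial d$, the key observation is that each constituent map $C_q^{\mathrm{ub},\epsilon}(L(\sigma_{\alpha_0\cdots\hat{\alpha}_i\cdots\alpha_{p+1}}))\to C_q^{\mathrm{ub},\epsilon}(L(\sigma_{\alpha_0\cdots\alpha_{p+1}}))$ entering the definition of $d$ is induced by the inclusion $N_\epsilon(\sigma_{\alpha_0\cdots\hat{\alpha}_i\cdots\alpha_{p+1}})\hookrightarrow N_\epsilon(\sigma_{\alpha_0\cdots\hat{\alpha}_i\cdots\alpha_{p+1}})$ together with the enlargement of the subspace $\sigma_{\alpha_0\cdots\hat{\alpha}_i\cdots\alpha_{p+1}}\supset \sigma_{\alpha_0\cdots\alpha_{p+1}}$. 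Both operations are induced by chain maps (a genuine inclusion, followed by a quotient projection), hence commute with $\partial$. Summing with the alternating signs then yields $\partial d=d\partial$.

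The only mildly subtle point, and the step I would write out most carefully, is that the identification
\[
  C_q^{\mathrm{ub},\epsilon}(N_\epsilon(\sigma_{\alpha_0\cdots\hat{\alpha}_i\cdots\alpha_{p+1}}),N_\epsilon(\sigma_{\alpha_0\cdots\hat{\alpha}_i\cdots\alpha_{p+1}})-\sigma_{\alpha_0\cdots\alpha_{p+1}})\cong C_q^{\mathrm{ub},\epsilon}(L(\sigma_{\alpha_0\cdots\alpha_{p+1}}))
\]
respects $\partial$ and is compatible with iterated removals of indices; this is a bookkeeping verification analogous to the commutation of restriction with $\delta$ used in Lemma \ref{double complex 1}, and constitutes the main (though entirely formal) obstacle. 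Once these compatibilities are confirmed, $(L,d,\partial)$ is a double complex.
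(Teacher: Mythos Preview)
Your approach is correct and matches the paper's: the paper simply writes ``Exactly as in Lemma \ref{double complex 1}'' without further detail, and your outline is precisely the expected unpacking of that reference, replacing the commutation of restriction with $\delta$ by the commutation of the chain-level quotient/excision maps with $\partial$. (One typo: the inclusion you wrote has identical source and target; presumably the source should be $N_\epsilon(\sigma_{\alpha_0\cdots\alpha_{p+1}})$.)
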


The restriction maps
\[
  C^{\mathrm{ub},\epsilon}_*(M)\to H_*^{\mathrm{ub},\epsilon}(N_\epsilon(\sigma_\alpha),N_\epsilon(\sigma_\alpha)-\sigma_\alpha)
\]
assemble into a chain map $C^{\mathrm{ub},\epsilon}_*(M)\to L_{0,*}$. Arguing as in Lemma \ref{exact 1}, we obtain:

\begin{lemma}
  \label{exact 2}
  The sequence of chain complexes
  \[
    0\to  C^{\mathrm{ub},\epsilon}_*(M)\xrightarrow{\theta}L_{0,*}\xrightarrow{d}L_{1,*}\xrightarrow{d}\cdots
  \]
  is exact.
\end{lemma}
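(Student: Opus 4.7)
The plan is to mirror the argument of Lemma~\ref{exact 1} dually on the chain side, producing a contracting homotopy $H\colon L_{p,*}\to L_{p-1,*}$ built from a chain-level analogue of the splitting operators $\rho_\alpha$ used there.

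First I would verify the initial portion of the sequence. The map $\theta$ is injective because every singular simplex $\tau$ in $M$ with $\diam(\tau)<\epsilon$ is contained in some $N_\epsilon(\sigma_\alpha)$, so a nonzero chain in $C_*^{\mathrm{ub},\epsilon}(M)$ must have nonzero image in at least one $C^{\mathrm{ub},\epsilon}_q(L(\sigma_\alpha))$. Exactness at $L_{0,*}$ is the expected gluing statement: a family $\{c_\alpha\}$ with $dc=0$ has overlapping restrictions that agree modulo simplices avoiding the relevant faces, and such a compatible family assembles into a well-defined chain on $M$, where the uniform and $\epsilon$-controlled bounds on the pieces transfer directly to the glued chain.

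For $p\ge 1$ I would construct the splitting as follows. For each singular simplex $\tau$ with $\diam(\tau)<\epsilon$, set $\alpha(\tau)=\min\{\alpha\in\N\mid\tau\subset N_\epsilon(\sigma_\alpha)\}$. For a chain $c\in C_q^{\mathrm{ub},\epsilon}(L(\sigma_{\alpha_0\cdots\alpha_p}))$ and each $\alpha\in\N$, let $\rho_\alpha(c)$ be the subchain retaining only those simplices $\tau$ in the support of $c$ for which $\alpha(\tau)=\alpha$; since $\rho_\alpha$ only drops simplices and preserves coefficients, uniform boundedness of the family is preserved. Then define
\[
  (Hc)_{\alpha_0\cdots\alpha_{p-1}}=\sum_{\alpha}\rho_\alpha(c_{\alpha\alpha_0\cdots\alpha_{p-1}}).
\]
A computation formally identical to the one in Lemma~\ref{exact 1}, using the identity $\sum_\alpha\rho_\alpha=1$ on the appropriate relative complex, then yields $dH+Hd=1$, establishing exactness at $L_{p,*}$ for every $p\ge 1$.

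The step I expect to cost the most effort is justifying that $\rho_\alpha(c_{\alpha\alpha_0\cdots\alpha_{p-1}})$ can legitimately be interpreted as an element of $C_q^{\mathrm{ub},\epsilon}(L(\sigma_{\alpha_0\cdots\alpha_{p-1}}))$: its constituent simplices lie in $N_\epsilon(\sigma_\alpha)\cap N_\epsilon(\sigma_{\alpha_0\cdots\alpha_{p-1}})$, and I need this intersection to be geometrically controlled by a neighborhood of the face $\sigma_{\alpha\alpha_0\cdots\alpha_{p-1}}$ so that the regrouping under one fewer index is well defined and respects the relative quotient. This is where the smallness of the fixed $\epsilon$ chosen at the start of the section, together with the bounded geometry of the triangulation of $M$, is essential; once that local geometric lemma is in hand, the rest of the argument is a direct translation of the cochain-level proof, with restriction of cochains replaced by selection of simplices of a chain.
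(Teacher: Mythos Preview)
Your proposal is essentially the same approach as the paper's: the paper gives no details beyond ``Arguing as in Lemma~\ref{exact 1}'', and what you have written is precisely the chain-level dualization of that argument, with the same selection operator $\rho_\alpha$ built from the ordering of the top simplices and the same contracting homotopy formula $(Hc)_{\alpha_0\cdots\alpha_{p-1}}=\sum_\alpha\rho_\alpha(c_{\alpha\alpha_0\cdots\alpha_{p-1}})$. Your identification of the one place requiring genuine care---namely, that passing from the relative pair $L(\sigma_{\alpha\alpha_0\cdots\alpha_{p-1}})$ to $L(\sigma_{\alpha_0\cdots\alpha_{p-1}})$ is not induced by an inclusion of pairs and must be justified using the smallness of $\epsilon$ and the bounded geometry of the triangulation---is exactly the point the paper glosses over, and your explanation of how to handle it is on target.
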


Since $M$ is oriented, each $n$-simplex of (the triangulating uniform simplicial complex of) $M$ is oriented. Let $\mu$ denote the (possibly infinite) sum of all oriented $n$-simplices. Then $\mu$ is a uniform bounded chain in $M$. Every $(n-1)$-simplex is contained in exactly two $n$-simplices with opposite induced orientations, hence $\mu$ is a cycle. Its homology class $[M]\in H_n^\mathrm{ub}(M)$ is called the \emph{fundamental class} of $M$. By the uniformity of the triangulating simplicial complex, after finitely many barycentric subdivisions, we may assume that every $n$-simplex has diameter $<\epsilon$. Let $\mu_\epsilon$ denote the sum of all oriented $n$-simplices of this subdivision. Then $\mu_\epsilon$ represents the same class $[M]$. Define
\[
  \mathrm{PD}\colon K_{p,q}\to L_{p,n-q},\quad(\mathrm{PD}(u))_{\alpha_0\cdots\alpha_p}=u_{\alpha_0\cdots\alpha_p}\frown(\mu_\epsilon)_{\alpha_0\cdots\alpha_p}.
\]
By \eqref{cap product d},
\[
  \mathrm{PD}\circ\delta=\partial\circ\mathrm{PD},
\]
so $\mathrm{PD}\colon K_{p,*}\to L_{p,n-*}$ is a map of (co)chain complexes (up to degree shift).

\begin{lemma}
  \label{cap}
  There is a commutative diagram
  \[
    \xymatrix{
      0\ar[r]&C_{\mathrm{ub},\epsilon}^q(M)\ar[d]^{\frown\mu}\ar[r]^\theta&K_{0,q}\ar[d]^{\mathrm{PD}}\ar[r]^d&K_{1,q}\ar[r]^d\ar[d]^{\mathrm{PD}}&\cdots\\
      0\ar[r]&C^{\mathrm{ub},\epsilon}_{n-q}(M)\ar[r]^\theta&L_{0,n-q}\ar[r]^d&L_{1,n-q}\ar[r]^d&\cdots.
    }
  \]
\end{lemma}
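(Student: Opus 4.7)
The plan is to verify commutativity of each square separately by reducing it to the naturality of the cap product under restriction. All horizontal maps act component-wise: $\theta$ by restriction to $N_\epsilon(\sigma_\alpha)$, and $d$ as an alternating sum of restrictions to the pair $(N_\epsilon(\sigma_{\alpha_0\cdots\alpha_{p+1}}), N_\epsilon(\sigma_{\alpha_0\cdots\alpha_{p+1}}) - \sigma_{\alpha_0\cdots\alpha_{p+1}})$; likewise $\mathrm{PD}$ acts component-wise by capping with $(\mu_\epsilon)_{\alpha_0\cdots\alpha_p}$. Hence each square commutes if and only if the corresponding identity holds on every indexing tuple $(\alpha_0,\ldots,\alpha_p)$.

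The technical input I will use is the chain-level identity
\[
  (u \frown c)|_B = (u|_B) \frown c_B,
\]
valid whenever $B \subset A \subset M$ are subspaces, $u$ is an $\epsilon$-controlled cochain on $A$, and $c_B$ denotes the image of an $\epsilon$-controlled chain $c$ on $A$ in the relative group associated with $B$. This is built into the formula defining the cap product together with the fact that $u \frown \sigma$ depends only on $u$ evaluated on faces of $\sigma$, all of which are small when $\sigma$ is $\epsilon$-controlled. For the leftmost square I apply this with $A = M$, $B = N_\epsilon(\sigma_\alpha)$ and $c = \mu_\epsilon$ (which may be used in place of $\mu$ since they agree on $\epsilon$-controlled cochains). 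For the square at level $(p, p+1)$, I apply the same identity at each tuple with $A = N_\epsilon(\sigma_{\alpha_0\cdots\hat\alpha_i\cdots\alpha_{p+1}})$, $B = N_\epsilon(\sigma_{\alpha_0\cdots\alpha_{p+1}})$, $u = u_{\alpha_0\cdots\hat\alpha_i\cdots\alpha_{p+1}}$, and $c = (\mu_\epsilon)_{\alpha_0\cdots\hat\alpha_i\cdots\alpha_{p+1}}$; summing with the sign $(-1)^i$ yields $\mathrm{PD}(du)_{\alpha_0\cdots\alpha_{p+1}} = d(\mathrm{PD}(u))_{\alpha_0\cdots\alpha_{p+1}}$.

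The step that requires the most care is verifying that the composite used to define $d$ on $L_{p,*}$ sends $(\mu_\epsilon)_{\alpha_0\cdots\hat\alpha_i\cdots\alpha_{p+1}}$ to $(\mu_\epsilon)_{\alpha_0\cdots\alpha_{p+1}}$. This reduces to checking that the map labeled $\cong$ in that composite is a genuine equality on $\epsilon$-controlled chains, not merely a chain homotopy equivalence. The key observation is that every $n$-simplex in $\mu_\epsilon$ has diameter $<\epsilon$, so any such simplex meeting $\sigma_{\alpha_0\cdots\alpha_{p+1}}$ automatically lies in $N_\epsilon(\sigma_{\alpha_0\cdots\alpha_{p+1}})$, while simplices disjoint from $\sigma_{\alpha_0\cdots\alpha_{p+1}}$ vanish in the relative quotient. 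Once this chain-level compatibility of $\mu_\epsilon$ with the face maps of $L$ is in place, the naturality argument above delivers commutativity of every square in the diagram.
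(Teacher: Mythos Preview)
Your proposal is correct and follows essentially the same approach as the paper: a direct component-wise verification that $\mathrm{PD}\circ d = d\circ\mathrm{PD}$, hinging on the fact that $(\mu_\epsilon)_{\alpha_0\cdots\hat\alpha_i\cdots\alpha_{p+1}}$ maps to $(\mu_\epsilon)_{\alpha_0\cdots\alpha_{p+1}}$ under the composite defining $d$ on $L$. You are in fact more explicit than the paper on two points it leaves implicit: the chain-level naturality identity $(u\frown c)|_B=(u|_B)\frown c_B$, and the reason the $\mu_\epsilon$-compatibility holds (any $\epsilon$-small simplex meeting $\sigma_{\alpha_0\cdots\alpha_{p+1}}$ already lies in its $\epsilon$-neighborhood).
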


\begin{proof}
  The commutativity of the leftmost square is immediate from the definition of $\mathrm{PD}\colon K_{0,q}\to L_{0,n-q}$. For $u\in K_{p,q}$,
  \begin{align*}
    (d\circ\mathrm{PD}(u))_{\alpha_0\cdots\alpha_{p+1}}&=\left(\sum_{i=0}^{p+1}(-1)^i\mathrm{PD}(u)_{\alpha_0\cdots\hat{\alpha}_i\cdots\alpha_{p+1}}\right)_{\alpha_0\cdots\alpha_{p+1}}\\
    &=\left(\sum_{i=0}^{p+1}(-1)^iu_{\alpha_0\cdots\hat{\alpha}_i\cdots\alpha_{p+1}}\frown(\mu_\epsilon)_{\alpha_0\cdots\hat{\alpha}_i\cdots\alpha_{p+1}}\right)_{\alpha_0\cdots\alpha_{p+1}}\\
    &=\left(\sum_{i=0}^{p+1}(-1)^iu_{\alpha_0\cdots\hat{\alpha}_i\cdots\alpha_{p+1}}\right)_{\alpha_0\cdots\alpha_{p+1}}\frown(\mu_\epsilon)_{\alpha_0\cdots\alpha_{p+1}}\\
    &=(du)_{\alpha_0\cdots\alpha_{p+1}}\frown(\mu_\epsilon)_{\alpha_0\cdots\alpha_{p+1}}\\
    &=(\mathrm{PD}\circ d(u))_{\alpha_0\cdots\alpha_{p+1}},
  \end{align*}
  giving desired commutativity.
\end{proof}

We now obtain the Poincar\'e duality in uniform bounded (co)homology.

\begin{theorem}
  \label{Poincare duality}
  Let $M$ be an oriented connected complete
  Riemannian $n$-manifold of bounded geometry. The map
  \[
    \frown[M]\colon H^q_\mathrm{ub}(M)\to H_{n-q}^\mathrm{ub}(M)
  \]
  is an isomorphism for all $q\ge 0$.
\end{theorem}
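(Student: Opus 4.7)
The strategy is to show that the map $\mathrm{PD}$ of Lemma \ref{cap} is a quasi-isomorphism on total complexes (viewing $L$ as a cohomological double complex via the re-grading $L_{p,n-q}$ in cohomological degree $q$). Combined with the compatibility established by Lemma \ref{cap}, this will yield that $\frown[M]$ is an isomorphism on (co)homology.

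First, I would argue that the augmentations $\theta\colon C_{\mathrm{ub},\epsilon}^*(M)\to\mathrm{Tot}(K)$ and $\theta\colon C^{\mathrm{ub},\epsilon}_{n-*}(M)\to\mathrm{Tot}(L)$ are quasi-isomorphisms. By Lemmas \ref{exact 1} and \ref{exact 2}, the mapping cones of these augmentations have exact rows, hence their totals are acyclic by the standard acyclic-resolution spectral sequence. By Lemma \ref{cap}, $\mathrm{PD}$ intertwines the augmentations with the cap product $\frown\mu_\epsilon$. Hence if $\mathrm{PD}$ is a quasi-isomorphism on totals, then so is $\frown\mu_\epsilon$ by two-out-of-three, and via Lemmas \ref{C_epsilon} and \ref{C^epsilon} the cap product $\frown[M]$ is an isomorphism $H^*_\mathrm{ub}(M)\to H^\mathrm{ub}_{n-*}(M)$.

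To prove $\mathrm{PD}$ is a quasi-isomorphism, I would use the spectral sequence associated with the vertical filtration. For $K_{p,*}$, each $N_\epsilon(\sigma_{\alpha_0\cdots\alpha_p})$ is uniformly biLipschitz to $\mathrm{Int}(D^n)$ and hence uniformly contractible, so by Lemma \ref{H(V,W)} applied to the disjoint union, the vertical cohomology is $\ell^\infty(\Lambda_p)$ concentrated at $q=0$, where $\Lambda_p=\{(\alpha_0,\ldots,\alpha_p)\mid\sigma_{\alpha_0\cdots\alpha_p}\ne\emptyset\}$. For $L_{p,*}$, the crucial local observation is that $N_\epsilon(\sigma)-\sigma$ is uniformly homotopy equivalent to $S^{n-1}$ regardless of the codimension of $\sigma$, because the contractible compact simplex $\sigma$ does not separate the open $n$-ball $N_\epsilon(\sigma)$: one routes around $\sigma$ via the $\epsilon$-collar beyond the boundary of $\sigma$. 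Thus the pair $(N_\epsilon(\sigma),N_\epsilon(\sigma)-\sigma)$ is uniformly homotopy equivalent to $(\mathrm{Int}(D^n),\mathrm{Int}(D^n-0))$, and $H_*^\mathrm{ub}(L(\sigma_{\alpha_0\cdots\alpha_p}))\cong\Z$ is concentrated in degree $n$, giving the vertical homology of $L_{p,*}$ as $\ell^\infty(\Lambda_p)$ concentrated at chain degree $n$.

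The $\mathrm{PD}$ map at each column sends the constant unit cochain on $N_\epsilon(\sigma_{\alpha_0\cdots\alpha_p})$ to the restricted local fundamental chain $(\mu_\epsilon)_{\alpha_0\cdots\alpha_p}$, which generates $H_n^\mathrm{ub}(L(\sigma_{\alpha_0\cdots\alpha_p}))\cong\Z$ by Lemma \ref{local Poincare duality} applied through the uniform homotopy equivalence above. Hence $\mathrm{PD}$ is an isomorphism $\ell^\infty(\Lambda_p)\to\ell^\infty(\Lambda_p)$ at each $E_1^{p,0}$, commutes with the horizontal differentials, and induces an isomorphism on $E_2=E_\infty$ since both spectral sequences are concentrated in the row $q=0$ and so collapse. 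Therefore $\mathrm{PD}$ is a quasi-isomorphism on total complexes. The main obstacle is the local homotopy computation $N_\epsilon(\sigma)-\sigma\simeq S^{n-1}$ for contractible $\sigma$ of arbitrary codimension, an Alexander-duality-type statement, together with the adaptation of Lemma \ref{local Poincare duality} from a deleted-point pair to a pair with a deleted contractible compactum, carried out via a uniform deformation retraction of $\sigma$ to an interior point.
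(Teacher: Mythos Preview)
Your proposal is correct and follows essentially the same route as the paper: both arguments compare the double complexes $K$ and $L$ via the map $\mathrm{PD}$, use Lemmas \ref{exact 1} and \ref{exact 2} (the horizontal filtration) to identify the total (co)homology with $H^*_{\mathrm{ub},\epsilon}(M)$ and $H^{\mathrm{ub},\epsilon}_{n-*}(M)$, and then use the vertical filtration together with Lemma \ref{local Poincare duality} to show $\mathrm{PD}$ is an isomorphism on the $E^1$-page and hence on $E^\infty$. You are in fact more explicit than the paper about the one point that requires care: the paper invokes Lemma \ref{local Poincare duality} directly for the $E^1$-isomorphism, but that lemma is stated for pairs $(\mathrm{Int}(D^n),\mathrm{Int}(D^n-0))$ with a deleted \emph{point}, whereas the columns of $L$ involve pairs $(N_\epsilon(\sigma_{\alpha_0\cdots\alpha_p}),N_\epsilon(\sigma_{\alpha_0\cdots\alpha_p})-\sigma_{\alpha_0\cdots\alpha_p})$ with a deleted \emph{simplex}; your observation that an ambient uniform contraction of the compact contractible $\sigma_{\alpha_0\cdots\alpha_p}$ to an interior point yields a uniform homotopy equivalence of pairs is exactly what is needed to close this gap.
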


\begin{proof}
    As in \cite[p.141]{Wb}, there is a spectral sequence
    \[
      E^1=H_*(K;d)\quad\Longrightarrow\quad H_*(\mathrm{Tot}(K)).
    \]
    where $\mathrm{Tot}(K)$ denotes the total complex of the double complex and $d^1\colon E^1_{p,q}\to E^1_{p,q+1}$ is induced by $\delta$. By Lemma \ref{exact 1},
    \[
      E^1_{p,q}=
      \begin{cases}
        C^q_{\mathrm{ub},\epsilon}(M)&(p=0)\\
        0&(p\ne 0).
      \end{cases}
    \]
    Hence $H_*(\mathrm{Tot}(K))\cong H^*_{\mathrm{ub},\epsilon}(M)$. By symmetry, we also have a spectral sequence
    \[
      {}_KE^1=H_*(K;\delta)\quad\Longrightarrow\quad H^*_{\mathrm{ub},\epsilon}(M)
    \]
    and similarly,
    \[
      {}_LE^1=H_*(L;\partial)\quad\Longrightarrow\quad H_*^{\mathrm{ub},\epsilon}(M).
    \]
    By Lemma \ref{cap}, there is a map of spectral sequences $\mathrm{PD}\colon{}_KE^r_{p,q}\to{}_LE^r_{p,n-q}$. By Lemma \ref{local Poincare duality}, this map is an isomorphism for $r=1$, and hence also for $r=\infty$. Consequently, the map
    \[
      H_{\mathrm{ub},\epsilon}^q(M)\to H^{\mathrm{ub},\epsilon}_{n-q}(M),\quad x\mapsto x\frown[\mu_\epsilon]
    \]
    is an isomorphism. As $\mu_\epsilon$ represents the fundamental class $[M]$, the result follows from Lemma \ref{C_epsilon}.
\end{proof}


\section{Obstruction theory}\label{Obstruction theory}

This section generalizes classical obstruction theory (for deforming sections) to uniformly continuous settings. That is, instead of just working with CW complexes and topological fiber bundles, we now require uniform control - all maps, homotopies, and local trivializations are uniformly continuous and uniformly equicontinuous families.

\subsection{Uniform relative fiber bundle}

A \emph{relative fiber bundle} is a pair of fiber bundles $p\colon E\to B$ and $p\vert_{E_0}\colon E_0\to B$, where $E_0$ is a subspace of $E$. If the fibers of $p\colon E\to B$ and $p\vert_{E_0}\colon E_0\to B$ are $F$ and $F_0$, respectively, then we denote the relative fiber bundle by
\[
  (F,F_0)\to(E,E_0)\xrightarrow{p}B.
\]

\begin{definition}
  \label{uniform fiber bundle}
  A relative fiber bundle $(F,F_0)\to(E,E_0)\xrightarrow{p}B$ of metric spaces is said to be \emph{uniform} if the following conditions hold:

  \begin{enumerate}
    \item The projection $p\colon E\to B$ is uniformly continuous.

    \item There is a cover $\{U_i\}_{i\in I}$ of $B$ with positive Lebesgue number.

    \item For each  $i\in I$, there is a fiberwise homeomorphism
    \[
      h_i\colon(p^{-1}(U_i),p^{-1}(U_i)\cap E_0)\xrightarrow{\cong}U_i\times(F,F_0)
    \]
    such that both families $\{h_i\}_{i\in I}$ and $\{h_i^{-1}\}_{i\in I}$ are uniformly equicontinuous.
  \end{enumerate}
\end{definition}

For the remainder of this section, let $(F,F_0)\to(E,E_0)\xrightarrow{p}B$ be a uniform relative fiber bundle above.

\begin{lemma}
  \label{pullback bundle}
  Let $f\colon X\to B$ be a uniformly continuous map. Then the induced relative fiber bundle
  \[
    (F,F_0)\to(f^{-1}E,f^{-1}E_0)\xrightarrow{q}X
  \]
  is uniform.
\end{lemma}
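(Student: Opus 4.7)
The plan is to verify the three conditions of Definition \ref{uniform fiber bundle} for the pullback bundle, taking $f^{-1}E\subset X\times E$ with the subspace metric induced from, say, the $\ell^1$-product metric on $X\times E$, and $q\colon f^{-1}E\to X$ the restriction of the first projection.

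For condition (1), the map $q$ is the restriction of a $1$-Lipschitz projection, hence is uniformly continuous. For condition (2), let $\epsilon_0>0$ be a Lebesgue number for the cover $\{U_i\}_{i\in I}$ of $B$, and set $V_i=f^{-1}(U_i)$. Since $f$ is uniformly continuous, there exists $\delta_0>0$ such that $d(x,y)<\delta_0$ implies $d(f(x),f(y))<\epsilon_0/2$. Then any subset $A\subset X$ of diameter less than $\delta_0$ satisfies $\mathrm{diam}(f(A))<\epsilon_0$, so $f(A)\subset U_i$ for some $i$, and hence $A\subset V_i$. Thus $\{V_i\}_{i\in I}$ has Lebesgue number $\delta_0$.

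For condition (3), write $h_i(e)=(p(e),h_i^F(e))$, and define
\[
  \widetilde{h}_i\colon\bigl(q^{-1}(V_i),\,q^{-1}(V_i)\cap f^{-1}E_0\bigr)\to V_i\times(F,F_0),\quad (x,e)\mapsto(x,h_i^F(e)).
\]
Its inverse is $(x,y)\mapsto(x,h_i^{-1}(f(x),y))$. Since $h_i$ is a fiberwise homeomorphism sending $p^{-1}(U_i)\cap E_0$ onto $U_i\times F_0$, the map $\widetilde{h}_i$ is a fiberwise homeomorphism of pairs.

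It remains to show uniform equicontinuity of $\{\widetilde{h}_i\}_{i\in I}$ and $\{\widetilde{h}_i^{-1}\}_{i\in I}$; this is the main (though routine) technical point. Given $\epsilon>0$, uniform equicontinuity of $\{h_i\}_{i\in I}$ yields $\delta>0$ such that $d(e,e')<\delta$ implies $d(h_i^F(e),h_i^F(e'))<\epsilon/2$ for every $i$. If $d((x,e),(x',e'))<\min(\delta,\epsilon/2)$, then $d(x,x')<\epsilon/2$ and $d(e,e')<\delta$, so
\[
  d\bigl(\widetilde{h}_i(x,e),\widetilde{h}_i(x',e')\bigr)=d(x,x')+d(h_i^F(e),h_i^F(e'))<\epsilon,
\]
uniformly in $i$. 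For the inverse, given $\epsilon>0$, uniform equicontinuity of $\{h_i^{-1}\}_{i\in I}$ provides $\eta>0$ such that $d((b,y),(b',y'))<\eta$ implies $d(h_i^{-1}(b,y),h_i^{-1}(b',y'))<\epsilon/2$ for every $i$. By uniform continuity of $f$, pick $\delta>0$ with $d(x,x')<\delta\Rightarrow d(f(x),f(x'))<\eta/2$, and shrink $\delta$ so that $\delta\le\min(\eta/2,\epsilon/2)$. Then $d((x,y),(x',y'))<\delta$ forces $d((f(x),y),(f(x'),y'))<\eta$, and therefore
\[
  d\bigl(\widetilde{h}_i^{-1}(x,y),\widetilde{h}_i^{-1}(x',y')\bigr)=d(x,x')+d(h_i^{-1}(f(x),y),h_i^{-1}(f(x'),y'))<\epsilon,
\]
uniformly in $i$. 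This verifies condition (3) and completes the proof.
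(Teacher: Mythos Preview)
Your proof is correct and follows essentially the same strategy as the paper: pull back the cover to $V_i=f^{-1}(U_i)$, check it has a Lebesgue number via the uniform continuity of $f$, and build the local trivializations from the $h_i$. The paper packages condition (3) more tersely by viewing the trivialization as the restriction of $1\times h_i$ to the fiber product $V_i\times_{U_i}p^{-1}(U_i)$ and asserting equicontinuity immediately, whereas you unpack the formulas $\widetilde{h}_i(x,e)=(x,h_i^F(e))$ and $\widetilde{h}_i^{-1}(x,y)=(x,h_i^{-1}(f(x),y))$ and verify equicontinuity by explicit $\epsilon$--$\delta$ estimates; in doing so you make visible the role of the uniform continuity of $f$ in the equicontinuity of $\{\widetilde{h}_i^{-1}\}$, which the paper's phrasing leaves implicit in the identification $V_i\times F\cong V_i\times_{U_i}(U_i\times F)$.
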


\begin{proof}
  Since $f^{-1}E=X\times_BE$, the projection $q\colon f^{-1}E\to X$ is uniformly continuous. Let $V_i=f^{-1}(U_i)$ for $i\in I$. Suppose that the cover $\{U_i\}_{i\in I}$ has a Lebesgue number $\epsilon>0$. Because $f$ is uniformly continuous, there exists $\delta>0$ such that for any subset $A\subset X$ with diameter $<\delta$, $f(A)$ has diameter $<\epsilon$. Hence $f(A)\subset U_i$ for some $i\in I$, and thus $A\subset V_i$. Therefore $\{V_i\}_{i\in I}$ forms a cover of $X$ with Lebesgue number $\delta>0$. Observe that $(q^{-1}(V_i),q^{-1}(V_i)\cap f^{-1}E_0)=V_i\times_{U_i}(p^{-1}(U_i),p^{-1}(U_i)\cap E_0)$ and $V_i\times(F,F_0)=(V_i\times_{U_i}U_i)\times(F,F_0)$. The restriction of the map
  \[
    1\times h_i\colon V_i\times(p^{-1}(U_i),p^{-1}(U_i)\cap E_0)\to V_i\times U_i\times(F,F_0)
  \]
  gives a fiberwise homeomorphism
  \[
    g_i\colon(q^{-1}(V_i),q^{-1}(V_i)\cap f^{-1}E_0)\xrightarrow{\cong}V_i\times(F,F_0).
  \]
  Since both $\{h_i\}_{i\in I}$ and $\{h_i^{-1}\}_{i\in I}$ are uniformly equicontinuous, so are$\{g_i\}_{i\in I}$ and $\{g_i^{-1}\}_{i\in I}$. This completes the proof.
\end{proof}

\begin{definition}
  A section $s\colon B\to E$ of $p\colon E\to B$ is \emph{bounded} if the union of the images of the compositions
  \[
    U_i\xrightarrow{s}p^{-1}(U_i)\xrightarrow{h_i}U_i\times F\xrightarrow{\mathrm{proj}}F
  \]
  for all $i\in I$ is a bounded subset of $F$.
\end{definition}

Recall that two sections $s_0,s_1\colon B\to E$ of $p\colon E\to B$ are \emph{fiberwise homotopic} if there is a map $H\colon B\times[0,1]\to E$ such that $H(-,0)=s_0$, $H(-,1)=s_1$ and for each $t\in[0,1]$, the map $H(-,t)\colon B\to E$ is a section of $p\colon E\to B$.

\begin{lemma}
  Let $s_0,s_1\colon B\to E$ be sections of $p\colon E\to B$. If $s_0$ is bounded and is uniformly fiberwise homotopic to $s_1$, then $s_1$ is also bounded.
\end{lemma}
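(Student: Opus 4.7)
The plan is to exploit the uniform equicontinuity of the local trivializations $\{h_i\}_{i\in I}$ together with the uniform continuity of a fiberwise homotopy $H\colon B\times[0,1]\to E$ to bound, uniformly in $b$ and $i$, the $F$-displacement of the path $t\mapsto h_i(H(b,t))$ inside each trivialization. Since $H$ is fiberwise, $p(H(b,t))=b$ for every $t\in[0,1]$, so for $b\in U_i$ we have $H(b,t)\in p^{-1}(U_i)$ throughout and
\[
  h_i(H(b,t))=(b,\gamma^i_b(t))
\]
for some path $\gamma^i_b\colon[0,1]\to F$ with $\gamma^i_b(0)=\mathrm{proj}_F(h_i(s_0(b)))$ and $\gamma^i_b(1)=\mathrm{proj}_F(h_i(s_1(b)))$.

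Next I would combine the two uniform regularity properties. Given $\epsilon>0$, uniform equicontinuity of $\{h_i\}_{i\in I}$ supplies $\eta>0$ such that $d(x,y)<\eta$ with $x,y\in p^{-1}(U_i)$ forces $d(h_i(x),h_i(y))<\epsilon$, uniformly in $i$. The uniform continuity of $H$ then supplies $\delta>0$ such that $|s-t|<\delta$ forces $d(H(b,s),H(b,t))<\eta$ for every $b\in B$. Chaining these and using that the product metric on $U_i\times F$ dominates the $F$-component distance gives
\[
  d(\gamma^i_b(s),\gamma^i_b(t))<\epsilon\qquad(|s-t|<\delta),
\]
uniformly in $i\in I$ and $b\in U_i$. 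Fixing $\epsilon=1$, subdividing $[0,1]$ into $N:=\lceil 1/\delta\rceil$ intervals of length at most $\delta$, and applying the triangle inequality yields $d(\gamma^i_b(0),\gamma^i_b(1))\le N$ for all $i\in I$ and $b\in U_i$.

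Finally, since $s_0$ is bounded, the set $\{\gamma^i_b(0)\mid i\in I,\ b\in U_i\}=\bigcup_{i\in I}\mathrm{proj}_F(h_i(s_0(U_i)))$ is contained in some ball of radius $R$ in $F$. The previous step then places $\{\gamma^i_b(1)\mid i\in I,\ b\in U_i\}=\bigcup_{i\in I}\mathrm{proj}_F(h_i(s_1(U_i)))$ in a ball of radius $R+N$, so $s_1$ is bounded. The only mild obstacle is bookkeeping the two nested moduli of continuity and ensuring the constants are genuinely independent of the index $i$; this is exactly what the uniform equicontinuity in Definition \ref{uniform fiber bundle} and the uniform continuity of $H$ (as opposed to the weaker Calder--Siegel notion) were designed to provide.
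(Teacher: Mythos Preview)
Your argument is correct and follows essentially the same route as the paper: both subdivide $[0,1]$ and use the uniform continuity of $H$ (the paper by invoking the argument of Lemma~\ref{EPUC homotopy}) to control the displacement between $s_0$ and $s_1$. The only difference is that you carry out the chaining directly in $F$ after applying the trivializations $h_i$, whereas the paper first bounds $d(s_0(x),s_1(x))$ in $E$ and then concludes; your version has the advantage of making explicit how the bound transfers to the fiber, which the paper leaves implicit in the final ``Hence $s_1$ is bounded.''
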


\begin{proof}
  Let $H\colon B\times[0,1]\to E$ be a uniform fiberwise homotopy between $s_0$ and $s_1$. Since $H$ is uniform, by an argument analogous to that of Lemma \ref{EPUC homotopy}, there exists $R>0$ such that
  \[
    d(s_0(x),s_1(x))=d(H(x,0),H(x,1))<R
  \]
  for all $x\in B$. Hence $s_1$ is bounded.
\end{proof}

We consider the following two conditions to build up obstruction theory for a uniform relative fiber bundle.

\begin{definition}
  \label{admissible}
  A pair of metric spaces $(X,A)$ is \emph{$n$-admissible} for $n\ge 2$ if:

  \begin{enumerate}
    \item $A$ is path-connected.

    \item $\pi_n(X,A)$ is a finitely generated abelian group, and the action of $\pi_1(A)$ on $\pi_n(X,A)$ is trivial.

    \item For every $\epsilon>0$, there exists $\delta>0$ such that if equicontinuous families of maps $\{f_i\colon D^n\to X\}_{i\in I}$ and $\{g_i\colon D^n\to X\}$ satisfy $d(f_i(x),g_i(x))<\delta$ for all $x\in D^n$ and $i\in I$, then for each $i\in I$, there is a homotopy $H_i\colon D^n\times[0,1]\to X$ from $f_i$ to $g_i$ such that $\{H_i\}_{i\in I}$ is equicontinuous and
    \[
      H_i(x\times[0,1])\subset N_\epsilon(f_i(x))
    \]
    for all $x\in D^n$ and $i\in I$.
  \end{enumerate}
\end{definition}

\begin{remark}
  \label{admissible condition}
  Condition (3) is satisfied whenever $X$ is a complete Riemannian manifold whose injectivity radius is positive.
\end{remark}



\begin{definition}
  \label{good}
  A uniform relative fiber bundle $(F,F_0)\to(E,E_0)\xrightarrow{p}B$ is called \emph{good} if:

   \begin{enumerate}
     \item $E_0$ is open in $E$.

     \item $F$ is complete and proper, and $F_0$ is nonempty and path-connected.

     \item $B$ is a uniform CW complex such that each closed cell is contained in some $U_i$.
   \end{enumerate}
\end{definition}

\subsection{Obstruction class}

For a subset $A$ of a metric space $X$ and $\epsilon>0$, define
\[
  \mathrm{Int}_\epsilon(A)=\{x\in A\mid N_\epsilon(x)\subset A\}.
\]
We state the main theorem of this section.

\begin{theorem}
  \label{obstruction}
  Let $(F,F_0)\to(E,E_0)\xrightarrow{p}B$ be a good uniform relative fiber bundle satisfying the following conditions:

  \begin{enumerate}
    \item $(F,F_0)$ is $n$-admissible.

    \item For any $i,j\in I$, the map
    \[
      h_j\circ h_i^{-1}\colon(U_i\cap U_j)\times(F,F_0)\to(U_i\cap U_j)\times(F,F_0)
    \]
    induces the identity map on $\pi_n(F,F_0)$ at every $x\in U_i\cap U_j$.
  \end{enumerate}

  \noindent Suppose there is a bounded uniformly continuous section $s_{n-1}\colon B\to E$ of $p\colon E\to B$ satisfying $s_{n-1}(B_{n-1})\subset\mathrm{Int}_{\epsilon_{n-1}}(E_0)$ for some $\epsilon_{n-1}>0$. Then there exists an obstruction class
  \[
    \mathfrak{o}_n(E,E_0;s_{n-1})\in H_\mathrm{ub}^n(B;\underline{\pi_n(F,F_0)})
  \]
  such that the following are equivalent:

  \begin{enumerate}
    \item[(i)] $\mathfrak{o}_n(E,E_0;s_{n-1})=0$.

    \item[(ii)] The section $s_{n-1}$ is uniformly fiberwise homotopic to a bounded uniformly continuous section $s_n\colon B\to E$ satisfying
    \[
      s_n(B_n)\subset\mathrm{Int}_{\epsilon_n}(E_0)
    \]
    for some $\epsilon_n>0$, through a uniform fiberwise homotopy $H\colon B\times[0,1]\to E$ such that $H(B_{n-2}\times[0,1])\subset\mathrm{Int}_{\epsilon_n}(E_0)$.
  \end{enumerate}

  \noindent Moreover, for an effectively proper uniformly continuous cellular map $f\colon X\to B$ between uniform CW complexes, we have
  \[
    f^*(\mathfrak{o}_n(E,E_0;s))=\mathfrak{o}_n(f^{-1}E,f^{-1}E_0;f^{-1}s).
  \]
\end{theorem}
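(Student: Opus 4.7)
The plan is to imitate the cellular formulation of classical obstruction theory, feeding uniform bounded cellular cohomology (Section~4) into the standard recipe while using the admissibility and good-bundle hypotheses to keep track of uniform control throughout.

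First I would define the obstruction cochain. Because the bundle is good, for each $n$-cell $\lambda \in \Lambda_n$ I can choose $i(\lambda)\in I$ with $\overline{\sigma}_\lambda \subset U_{i(\lambda)}$ and form the composite
\[
  \tau_\lambda \;:=\; \mathrm{pr}_F \circ h_{i(\lambda)} \circ s_{n-1} \circ \phi_\lambda \colon (D^n,S^{n-1}) \to (F,F_0),
\]
where the fact that $\tau_\lambda$ sends $S^{n-1}$ into $F_0$ uses $s_{n-1}(B_{n-1}) \subset \mathrm{Int}_{\epsilon_{n-1}}(E_0)$ together with equicontinuity of the $h_i$. Setting $\mathfrak{o}_n(\lambda) := [\tau_\lambda] \in \pi_n(F,F_0)$, well-definedness (independence of the choice of $i(\lambda)$) is hypothesis~(2). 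Then I would verify that $\{\mathfrak{o}_n(\lambda)\}_{\lambda}$ is bounded, i.e.\ lies in $\mathrm{Cell}^n_{\mathrm{ub}}(B;\underline{\pi_n(F,F_0)}) \cong \ell^\infty(\Lambda_n;\underline{\pi_n(F,F_0)})$. Since $\{\phi_\lambda\}$, $\{h_i\}$ are equicontinuous and $s_{n-1}$ is uniformly continuous, $\{\tau_\lambda\}$ is equicontinuous; boundedness of the section plus properness of $F$ confines every $\tau_\lambda(D^n)$ to one relatively compact subset of $F$. Arzel\`a--Ascoli then makes $\{\tau_\lambda\}$ precompact in $C(D^n,F)$, and condition~(3) of $n$-admissibility upgrades $C^0$-closeness to homotopy of pairs, forcing only finitely many classes $[\tau_\lambda]$ to appear, whence the pseudo-norm on the finitely generated group $\pi_n(F,F_0)$ is bounded.

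The cocycle identity $\delta \mathfrak{o}_n = 0$ is a local assertion on each $(n+1)$-cell and, inside a single trivialization, reduces to the classical vanishing of the sum of face-obstructions on the boundary of $D^{n+1}$. For the equivalence, the direction (ii)~$\Rightarrow$~(i) follows from a standard difference-cochain argument applied to the given uniform fiberwise homotopy $H$: $H$ determines a uniform bounded $(n-1)$-cochain $d$ measuring how $s_n$ differs from $s_{n-1}$ on $B_{n-1}$, and $\delta d = \mathfrak{o}_n(s_{n-1}) - \mathfrak{o}_n(s_n) = \mathfrak{o}_n(s_{n-1})$ since $s_n(B_n) \subset \mathrm{Int}_{\epsilon_n}(E_0)$ kills the second obstruction; boundedness of $d$ comes from the same Arzel\`a--Ascoli argument applied to the slices of $H$. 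The reverse direction (i)~$\Rightarrow$~(ii) reverses this: given $c$ with $\delta c = \mathfrak{o}_n$, boundedness of $c$ and the fact that $\pi_n(F,F_0)$ is finitely generated force $c$ to take only finitely many values, so I can pick representatives from a finite list and, using condition~(3) of $n$-admissibility, assemble them into an equicontinuous family of maps. Transporting these via $h_{i(\mu)}^{-1}$ yields a uniform modification of $s_{n-1}$ over $B_{n-1}$ whose new obstruction cochain is $\mathfrak{o}_n - \delta c = 0$. A second round of the same argument extends the modified section over each $n$-cell (drawing null-homotopies from another finite list) and produces both $s_n$ and the required uniform fiberwise homotopy $H$. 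Finally, naturality is direct: for cellular $f\colon X\to B$, Lemma~\ref{pullback bundle} gives a uniform relative bundle on the pullback, and the pulled-back trivializations $h_{i(\lambda)} \circ (f\times 1)$ produce the same representative $\tau_\lambda$ at every cell $\mu$ of $X$ covering $\sigma_\lambda$, so $f^*\mathfrak{o}_n = \mathfrak{o}_n(f^{-1}E, f^{-1}E_0; f^{-1}s_{n-1})$ already at the cellular level.

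The hardest step will be (i)~$\Rightarrow$~(ii): turning a cohomological identity $\mathfrak{o}_n = \delta c$ into an \emph{honest uniform} fiberwise homotopy $H$ with $s_n(B_n) \subset \mathrm{Int}_{\epsilon_n}(E_0)$ for a \emph{uniform} $\epsilon_n > 0$. Classically one works cell by cell with impunity, but here we must keep infinitely many cell-level modifications equicontinuous and their images uniformly inside $E_0$. The essential leverage is that both $c$ and $\mathfrak{o}_n$ take only finitely many values in the finitely generated group $\pi_n(F,F_0)$, so only finitely many distinct local deformations actually appear; equicontinuity (via condition~(3) of $n$-admissibility) combined with Lemma~\ref{continuity} then promotes them to a uniform global homotopy, and properness of $F$ delivers the uniform interior constant $\epsilon_n$.
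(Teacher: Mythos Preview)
Your proposal is correct and follows essentially the same route as the paper: define the cellular obstruction cochain via trivializations, use Arzel\`a--Ascoli together with condition~(3) of admissibility to force finitely many homotopy classes (hence boundedness), verify the cocycle condition locally, and run the classical difference-cochain argument with the finite-values trick and Lemma~\ref{continuity} supplying the uniform control. The paper organizes these steps as a sequence of separate lemmas (an Arzel\`a--Ascoli lemma producing a finite list $\{\mu_1,\dots,\mu_k\}$ of model cells, a cocycle lemma, and two difference-cochain lemmas), but the content and the key leverage points you identify---properness of $F$ for compactness, finite image of bounded cochains in $\pi_n(F,F_0)$, and equicontinuity patching via Lemma~\ref{continuity}---match exactly.
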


For the remainder of this section, we assume that $(F,F_0)\to(E,E_0)\xrightarrow{p}B$ satisfies the hypotheses of Theorem \ref{obstruction}.

\begin{lemma}
  \label{pi_n}
  The action of $\pi_1(E_0)$ on $\pi_n(E,E_0)$ is trivial.
\end{lemma}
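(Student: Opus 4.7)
The plan is to reduce the action of $\pi_1(E_0)$ on $\pi_n(E,E_0)$ to local computations in the trivializing charts, where the two hypotheses of Theorem \ref{obstruction}---the $n$-admissibility of $(F,F_0)$ and the triviality of the transition maps on $\pi_n(F,F_0)$---come into play. Fix a basepoint $e_0 \in E_0$, take an arbitrary loop $\gamma\colon[0,1]\to E_0$ at $e_0$ and a class $[\alpha]\in\pi_n(E,E_0,e_0)$; the goal is to show $[\gamma]\cdot[\alpha]=[\alpha]$.

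First, I would exploit the uniform continuity of $p\colon E\to B$ together with the positive Lebesgue number of the cover $\{U_i\}_{i\in I}$ of $B$ to subdivide $[0,1]$ as $0=t_0<t_1<\cdots<t_k=1$ so that each $p(\gamma([t_{j-1},t_j]))$ lies in some chart $U_{i_j}$. The action of $[\gamma]$ then factors as the composition of basepoint-transports along the pieces $\gamma|_{[t_{j-1},t_j]}$, so it suffices to analyze each piece separately under mutually compatible local identifications.

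On each piece, the trivialization $h_{i_j}\colon p^{-1}(U_{i_j})\cong U_{i_j}\times F$ carries $p^{-1}(U_{i_j})\cap E_0$ onto $U_{i_j}\times F_0$. A comparison of the long exact sequences of $(U_{i_j}\times F,U_{i_j}\times F_0)$ and of $(F,F_0)$, together with $\pi_*(X\times Y)\cong\pi_*(X)\oplus\pi_*(Y)$, yields a canonical isomorphism $\pi_n(U_{i_j}\times F,U_{i_j}\times F_0)\cong\pi_n(F,F_0)$ under which the action of $\pi_1(U_{i_j}\times F_0)=\pi_1(U_{i_j})\times\pi_1(F_0)$ factors through the second projection; the $\pi_1(F_0)$-action on $\pi_n(F,F_0)$ is trivial by the $n$-admissibility of $(F,F_0)$, so each local transport is trivial. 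To glue these local triviality statements into a global one, I invoke hypothesis (2) of Theorem \ref{obstruction}: the transition $h_{i_{j+1}}\circ h_{i_j}^{-1}$ induces the identity on $\pi_n(F,F_0)$ at every overlap point, so the successive local identifications of $\pi_n(p^{-1}(U_{i_j}),p^{-1}(U_{i_j})\cap E_0)$ with $\pi_n(F,F_0)$ composed along $\gamma$ yield the identity endomorphism of $\pi_n(F,F_0)$. Combining these facts gives $[\gamma]\cdot[\alpha]=[\alpha]$.

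The main obstacle I anticipate is the careful bookkeeping of moving basepoints: the action is defined via fundamental-groupoid transport, and one has to verify that the transport across chart overlaps, when translated through the local isomorphisms with $\pi_n(F,F_0)$, really does compose to the identity of $\pi_n(F,F_0)$. This is precisely where hypothesis (2) of Theorem \ref{obstruction} is indispensable; without it the argument would only establish triviality within each chart, not across the chain of charts covering $\gamma$.
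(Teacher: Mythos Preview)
Your argument is correct, but the paper takes a more streamlined route that avoids the chart-by-chart bookkeeping entirely. The paper first invokes the homotopy exact sequence of the relative fibration $(F,F_0)\to(E,E_0)\to(B,B)$: since $\pi_*(B,B)=0$, the fiber inclusion induces an isomorphism $\pi_n(F,F_0)\xrightarrow{\cong}\pi_n(E,E_0)$ globally. Under this identification, the $\pi_1(E_0)$-action factors through the quotient $\pi_1(E_0)\to\pi_1(B)$ because the image of $\pi_1(F_0)$ acts trivially by $n$-admissibility; the residual $\pi_1(B)$-action is the monodromy of the bundle on $\pi_n(F,F_0)$, which is trivial by hypothesis~(2). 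What you are doing is essentially unpacking this factorization by hand: your loop subdivision realizes an element of $\pi_1(E_0)$ as a composite of pieces, each of which lies in the image of some $\pi_1(U_{i_j}\times F_0)$, and your overlap analysis is exactly the statement that the monodromy representation of $\pi_1(B)$ is trivial. Both arguments use the two hypotheses in the same places; the paper's version just packages the localization step into the single observation that $\pi_n(F,F_0)\cong\pi_n(E,E_0)$, sparing you the basepoint bookkeeping you flagged as the main obstacle.
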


\begin{proof}
  Since $F_0$ is nonempty, we have the homotopy exact sequence
  \[
    \cdots\to\pi_{n+1}(B,B)\to\pi_n(F,F_0)\to\pi_n(E,E_0)\to\pi_n(B,B)\to\cdots.
  \]
  Because $\pi_*(B,B)=0$, the inclusion $(F,F_0)\to(E,E_0)$ is an isomorphism in $\pi_n$. Consider commutative diagram
  \[
    \xymatrix{
      \pi_1(F_0)\times\pi_n(F,F_0)\ar[r]\ar[d]&\pi_n(F,F_0)\ar[d]\\
      \pi_1(E_0)\times\pi_n(E,E_0)\ar[r]&\pi_n(E,E_0),
    }
  \]
  where the horizontal maps are the action of $\pi_1$. Since the action of $\pi_1(F_0)$ on $\pi_n(F,F_0)$ is trivial, the action of $\pi_1(E_0)$ on $\pi_n(E,E_0)$ factors through that of $\pi_1(B)$ on $\pi_n(E,E_0)\cong\pi_n(F,F_0)$, which is trivial by condition (2) of Theorem \ref{obstruction}. Hence the claim follows.
\end{proof}

Let $s_{n-1}\colon B\to E$ be a section satisfying $s_{n-1}(B_{n-1})\subset\mathrm{Int}_{\epsilon_{n-1}}(E_0)$ for some $\epsilon_{n-1}>0$. For pairs of spaces $(X,Y)$ and $(Z,W)$, denote by $\pi_0(X,Y;Z,W)$ the set of free homotopy classes of maps $(X,Y)\to(Z,W)$. Let $\Lambda$ be the set of cells of $B$. By Lemma \ref{pi_n}, the composite of the natural maps
\[
  \pi_n(F,F_0)\to\pi_n(E,E_0)\to\pi_0(D^n,S^{n-1};E,E_0)
\]
is an isomorphism. Hence, for each $\lambda\in\Lambda_n$, we define
\[
  o_n(E,E_0;s_{n-1})\colon\Lambda_n\to\pi_0(D^n,S^{n-1};E,E_0)\cong\pi_n(F,F_0),\quad\lambda\mapsto[s_{n-1}\circ\phi_\lambda],
\]
where $\{\phi_\lambda\colon D^{n_\lambda}\to B\}_{\lambda\in\Lambda}$ are characteristic maps of $B$. For each $\lambda\in\Lambda_n$, choose $i_\lambda\in I$ such that $\phi_\lambda(D^n)\subset U_{i_\lambda}$, and define
\begin{multline*}
  \Phi_\lambda\colon(D^n,S^{n-1})\xrightarrow{\phi_\lambda}(U_{i_\lambda},U_{i_\lambda}\cap B_{n-1})\xrightarrow{s}(p^{-1}(U_{i_\lambda}),p^{-1}(U_{i_\lambda})\cap E_0)\\
  \xrightarrow{h_{i_\lambda}}U_{i_\lambda}\times(F,F_0)\xrightarrow{\mathrm{proj}}(F,F_0).
\end{multline*}

\begin{lemma}
  \label{Ascoli-Arzela}
  There is a finite subset $\{\mu_1,\ldots,\mu_k\}\subset\Lambda_n$ satisfying:

  \begin{enumerate}
    \item For every $\lambda\in\Lambda_n$, there exist $\mu(\lambda)\in\{\mu_1,\ldots,\mu_k\}$ and a homotopy $H_\lambda\colon(D^n,S^{n-1})\times[0,1]\to(F,F_0)$ from $\Phi_\lambda$ to $\Phi_{\mu(\lambda)}$.

    \item There exists $\epsilon>0$ such that $H_\lambda(S^{n-1}\times[0,1])\subset\mathrm{Int}_\epsilon(F_0)$ for all $\lambda\in\Lambda_n$.

    \item The family $\{H_\lambda\}_{\lambda\in\Lambda}$ is equicontinuous.
  \end{enumerate}
\end{lemma}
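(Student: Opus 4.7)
The plan is to combine the Arzel\`a-Ascoli theorem with the admissibility hypothesis applied to the family $\{\Phi_\lambda\}_{\lambda\in\Lambda_n}$. First I would verify that this family is equicontinuous and that $\bigcup_{\lambda\in\Lambda_n}\Phi_\lambda(D^n)$ lies in a bounded subset of $F$. Equicontinuity is assembled from the equicontinuity of the characteristic maps $\{\phi_\lambda\}$ in Definition~\ref{uniform CW complex}, the uniform continuity of $s_{n-1}$, the uniform equicontinuity of $\{h_i\}_{i\in I}$ in Definition~\ref{uniform fiber bundle}, and the $1$-Lipschitz property of the projection $U_{i_\lambda}\times F\to F$. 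Uniform boundedness of the images is exactly the hypothesis that $s_{n-1}$ is a bounded section. Since $F$ is proper, this bounded set has compact closure, and so by Arzel\`a-Ascoli the family $\{\Phi_\lambda\}$ is totally bounded in $C(D^n,F)$ with the supremum metric.

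Second, I would establish a uniform constant $\epsilon_0>0$ with $\Phi_\lambda(S^{n-1})\subset\mathrm{Int}_{\epsilon_0}(F_0)$ for every $\lambda\in\Lambda_n$. Since $B$ is a regular CW complex, $\phi_\lambda(S^{n-1})\subset B_{n-1}$, so $s_{n-1}(\phi_\lambda(S^{n-1}))$ lies at distance at least $\epsilon_{n-1}$ from $E\setminus E_0$. The uniform equicontinuity of the inverse charts $\{h_i^{-1}\}$ then furnishes an $\epsilon_0>0$ such that, in any trivializing chart, any point within distance $\epsilon_0$ of $\Phi_\lambda(x)$ in $F$ pulls back to within $\epsilon_{n-1}$ of $s_{n-1}(\phi_\lambda(x))$ in $E$, hence still lies in $E_0$, and therefore in $F_0$.

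Finally, I would apply condition (3) of Definition~\ref{admissible} with tolerance $\epsilon_0/2$ to obtain a threshold $\delta>0$, extract from $\{\Phi_\lambda\}$ a finite $\delta$-net $\{\Phi_{\mu_1},\ldots,\Phi_{\mu_k}\}$ with each $\mu_j\in\Lambda_n$ using the total boundedness from the first step, and choose $\mu(\lambda)\in\{\mu_1,\ldots,\mu_k\}$ with $\sup_{x\in D^n}d(\Phi_\lambda(x),\Phi_{\mu(\lambda)}(x))<\delta$. Admissibility then supplies the desired equicontinuous family $\{H_\lambda\}$ of homotopies from $\Phi_\lambda$ to $\Phi_{\mu(\lambda)}$ with $H_\lambda(x\times[0,1])\subset N_{\epsilon_0/2}(\Phi_\lambda(x))$, which gives (1) and (3) directly; for (2) with $\epsilon=\epsilon_0/2$, the depth bound of the second step and the triangle inequality force $N_{\epsilon_0/2}(\Phi_\lambda(x))\subset\mathrm{Int}_{\epsilon_0/2}(F_0)$ whenever $x\in S^{n-1}$. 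The main obstacle I anticipate is the bookkeeping of uniformities in the second step: converting the depth of $s_{n-1}(B_{n-1})$ inside $E_0$ into a depth of $\Phi_\lambda(S^{n-1})$ inside $F_0$ which is genuinely independent of both $\lambda\in\Lambda_n$ and $x\in S^{n-1}$, relying on the \emph{uniform} equicontinuity of the inverse chart family rather than the equicontinuity of each individual chart.
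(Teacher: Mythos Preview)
Your proposal is correct and follows the same architecture as the paper's proof: establish equicontinuity and pointwise relative compactness of $\{\Phi_\lambda\}$, invoke Arzel\`a--Ascoli to get a finite $\delta$-net, and then apply condition~(3) of Definition~\ref{admissible} to produce the equicontinuous family of homotopies. The one substantive difference is how the depth constant for part~(2) is obtained: the paper extracts $\epsilon$ \emph{a posteriori} from the finitely many $\Phi_{\mu_j}(S^{n-1})$ using compactness and openness of $F_0$, whereas you derive a uniform $\epsilon_0$ for \emph{all} $\Phi_\lambda(S^{n-1})$ directly from the hypothesis $s_{n-1}(B_{n-1})\subset\mathrm{Int}_{\epsilon_{n-1}}(E_0)$ and the uniform equicontinuity of $\{h_i^{-1}\}$. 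Your ordering is cleaner, since in the paper's version the net depends on $\delta$, $\epsilon$ depends on the net, and $\delta$ is then chosen small relative to $\epsilon$, which as written is circular; your second step dissolves this by fixing $\epsilon_0$ before any net is chosen.
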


\begin{proof}
  Consider the space of maps $D^n\to F$ with the supremum metric, and let $S$ denote the closure of $\{\Phi_\lambda\}_{\lambda\in\Lambda_n}$ in this space. Since the section $s\colon B\to E$ is bounded, $S$ is bounded. For each $x\in D^n$, the set $S_x=\{\phi(x)\mid\phi\in S\}\subset F$ is bounded, and because $F$ is proper, $S_x$ is relatively compact. Moreover, as the family of trivialization $\{h_i\}_{i\in I}$ is uniformly equicontinuous, so is $S$. By the Ascoli-Arzel\`{a} theorem \cite[21 Ascoli Theorem]{K}, there is a finite subset $\{\mu_1,\ldots,\mu_k\}\subset\Lambda_n$ such that, for any any $\delta>0$ and $\lambda\in\Lambda_n$, one can choose $\mu(\lambda)\in\{\mu_1,\ldots,\mu_k\}$ with
  \[
    d(\Phi_\lambda(x),\Phi_{\mu(\lambda)}(x))<\delta
  \]
  for all $x\in D^n$. Then by condition (3) of Definition \ref{admissible}, for each $\lambda\in\Lambda_n$, there is a homotopy $H_\lambda\colon D^n\times[0,1]\to F$ from $\Phi_\lambda$ to $\Phi_{\mu(\lambda)}$, such that the family $\{H_\lambda\}_{\lambda\in\Lambda}$ is equicontinuous. Since $E_0$ is open in $E$, $F_0$ is open in $F$. Because $D^n$ is compact, there exists $\epsilon>0$ such that
  \[
    \Phi_{\mu_1}(D^n),\ldots,\Phi_{\mu_k}(D^n)\subset\mathrm{Int}_\epsilon(F_0).
  \]
  By choosing $\delta$ sufficiently small, we ensure that $H_\lambda(S^{n-1}\times[0,1])\subset\mathrm{Int}_\epsilon(F_0)$ for all $\lambda\in\Lambda_n$. This completes the proof.
\end{proof}

\begin{lemma}
  The map
  \[
    o_n(E,E_0;s_{n-1})\colon\Lambda_n\to\pi_n(F,F_0)
  \]
  defines an element of $\mathrm{Cell}_\mathrm{ub}^n(B;\underline{\pi_n(F,F_0)})$.
\end{lemma}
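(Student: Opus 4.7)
My plan is to unpack the target group $\mathrm{Cell}_\mathrm{ub}^n(B;\underline{\pi_n(F,F_0)})$ via Lemma \ref{cellular}: under the natural isomorphism
\[
  \mathrm{Cell}_\mathrm{ub}^n(B;\underline{\pi_n(F,F_0)})=H^n_\mathrm{ub}(B_n,B_{n-1};\underline{\pi_n(F,F_0)})\cong \ell^\infty(\Lambda_n;\underline{\pi_n(F,F_0)}),
\]
an element is precisely a bounded function $\Lambda_n\to\pi_n(F,F_0)$, where the pseudo-norm on $\pi_n(F,F_0)$ is the one on finitely generated abelian groups from Example \ref{fin gen} (this is well defined because $\pi_n(F,F_0)$ is finitely generated by the $n$-admissibility condition (2) of Definition \ref{admissible}). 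So the content of the lemma reduces to showing that the image
\[
  \{o_n(E,E_0;s_{n-1})(\lambda)\mid\lambda\in\Lambda_n\}\subset\pi_n(F,F_0)
\]
is a bounded set.

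Next, I would exploit Lemma \ref{Ascoli-Arzela}. Recall that $o_n(E,E_0;s_{n-1})(\lambda)=[\Phi_\lambda]$ in $\pi_n(F,F_0)$ (condition (2) of Theorem \ref{obstruction} ensures this class does not depend on the choice of $i_\lambda\in I$). Lemma \ref{Ascoli-Arzela} gives a \emph{finite} subset $\{\mu_1,\ldots,\mu_k\}\subset\Lambda_n$ such that for every $\lambda\in\Lambda_n$ there is a pair-homotopy $H_\lambda\colon(D^n,S^{n-1})\times[0,1]\to(F,F_0)$ from $\Phi_\lambda$ to $\Phi_{\mu(\lambda)}$ for some $\mu(\lambda)\in\{\mu_1,\ldots,\mu_k\}$, with $H_\lambda(S^{n-1}\times[0,1])\subset\mathrm{Int}_\epsilon(F_0)\subset F_0$. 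Since this is a genuine homotopy of pairs, it gives $[\Phi_\lambda]=[\Phi_{\mu(\lambda)}]$ in the set of free homotopy classes $\pi_0(D^n,S^{n-1};F,F_0)$, which agrees with $\pi_n(F,F_0)$ because $F_0$ is path-connected and the action of $\pi_1(F_0)$ on $\pi_n(F,F_0)$ is trivial (compare Lemma \ref{pi_n}). Therefore the image of $o_n(E,E_0;s_{n-1})$ is contained in the finite set $\{[\Phi_{\mu_1}],\ldots,[\Phi_{\mu_k}]\}$.

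Finally, in any finitely generated abelian group equipped with the pseudo-norm of Example \ref{fin gen}, every finite subset is bounded (its image in $A/\mathrm{Tor}\cong\Z^r$ is finite, hence $\ell^1$-bounded). Consequently $o_n(E,E_0;s_{n-1})\in\ell^\infty(\Lambda_n;\underline{\pi_n(F,F_0)})$, which under the isomorphism from Lemma \ref{cellular} gives the desired element of $\mathrm{Cell}_\mathrm{ub}^n(B;\underline{\pi_n(F,F_0)})$.

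The only nontrivial step is the second one, where the real input is already packaged into Lemma \ref{Ascoli-Arzela}; the hard work (Ascoli--Arzel\`a plus the admissibility condition to promote uniform closeness to an actual homotopy of pairs) has been done there. Once this lemma is invoked, the lemma at hand is a formal consequence of the algebraic fact that bounded subsets of finitely generated abelian groups are exactly the sets with finite image modulo torsion.
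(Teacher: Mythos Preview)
Your proposal is correct and follows essentially the same approach as the paper: reduce via Lemma \ref{cellular} to showing the map $\Lambda_n\to\pi_n(F,F_0)$ is bounded, and then invoke Lemma \ref{Ascoli-Arzela} to see that its image is in fact finite. The paper's proof is more terse (it simply notes that boundedness is equivalent to finiteness of the image and that this follows from Lemma \ref{Ascoli-Arzela}), while you spell out explicitly why the pair-homotopies $H_\lambda$ yield equalities in $\pi_n(F,F_0)$, but the substance is identical.
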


\begin{proof}
  By Lemma \ref{cellular}, since $\pi_n(F,F_0)$ is finitely generated, the claim is equivalent to showing that $o_n(E,E_0;s_{n-1})(\Lambda_n)$ is a finite subset of $\pi_n(F,F_0)$. This follows immediately from Lemma \ref{Ascoli-Arzela}.
\end{proof}

Hereafter, we regard $o_n(E,E_0;s_{n-1})$ as an element of $\mathrm{Cell}_\mathrm{ub}^n(B;\underline{\pi_n(F,F_0)})$.

\begin{lemma}
  \label{cocycle}
  $\delta o_n(E,E_0;s_{n-1})=0$.
\end{lemma}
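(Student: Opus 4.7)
The approach is the classical obstruction-theoretic argument that an obstruction cochain is automatically a cocycle, transported into the uniform bounded framework. The plan has three main steps.

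First, by Lemma \ref{cellular}, $\mathrm{Cell}^{n+1}_\mathrm{ub}(B;\underline{\pi_n(F,F_0)})\cong\ell^\infty(\Lambda_{n+1};\underline{\pi_n(F,F_0)})$, so the statement $\delta o_n(E,E_0;s_{n-1})=0$ reduces to a cell-by-cell check: for every $(n+1)$-cell $\tau$, one must verify $(\delta o_n)(\tau)=0$ in $\pi_n(F,F_0)$. Since the cellular coboundary is defined via the connecting map of the triple $(B_{n+1},B_n,B_{n-1})$, the standard cellular-cohomology identification expresses $(\delta o_n)(\tau)=\sum_{\lambda\in\Lambda_n}[\tau:\lambda]\,o_n(\lambda)$, where $[\tau:\lambda]$ is the incidence number.

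Second, I fix $\tau$ and, using condition (3) of Definition \ref{good}, pick $j\in I$ with $\phi_\tau(D^{n+1})\subset U_j$. Define
\[
  \widetilde{\Phi}_\tau=\mathrm{proj}\circ h_j\circ s_{n-1}\circ\phi_\tau\colon D^{n+1}\to F.
\]
Because $h_j$ is a fiberwise pair isomorphism and $s_{n-1}(B_{n-1})\subset E_0$, we have $\widetilde{\Phi}_\tau(\phi_\tau^{-1}(B_{n-1}))\subset F_0$. By regularity of the CW decomposition of $B$, the subset $Y=\phi_\tau^{-1}(B_{n-1})\cap S^n$ is an $(n-1)$-subcomplex of $S^n$, and $S^n\setminus Y$ is a disjoint union of open $n$-disks, one for each appearance of an $n$-cell $\lambda$ in $\partial\tau$. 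On the closure of each such $n$-disk, $\widetilde{\Phi}_\tau$ restricts to a map of pairs $(D^n,S^{n-1})\to(F,F_0)$; by condition (2) of Theorem \ref{obstruction}, its class in $\pi_n(F,F_0)$ agrees with $o_n(\lambda)$, independently of whether the trivialization used in the definition of $o_n(\lambda)$ is $h_{i_\lambda}$ or $h_j$. Summing with signs inherited from the orientations, the class of $\widetilde{\Phi}_\tau|_{S^n}$, read off via the decomposition of $S^n$ as $Y$ glued to $n$-disks, equals $\sum_\lambda[\tau:\lambda]\,o_n(\lambda)=(\delta o_n)(\tau)$.

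Third, the map $\widetilde{\Phi}_\tau$ itself provides a filling: it is defined on all of $D^{n+1}$, so $\widetilde{\Phi}_\tau|_{S^n}$ bounds in $F$. Combined with the fact that $F_0$ is path-connected (Definition \ref{good}(2)), so that $\widetilde{\Phi}_\tau|_Y$ is homotopic within $F_0$ to a constant, this filling yields a null-homotopy of the relative class represented by $\widetilde{\Phi}_\tau|_{S^n}$, hence $(\delta o_n)(\tau)=0$ in $\pi_n(F,F_0)$.

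The main obstacle is the identification performed in the second step: showing that the class of $\widetilde{\Phi}_\tau|_{S^n}$, decomposed over the $n$-disk pieces of $S^n\setminus Y$, reproduces exactly the weighted sum $\sum[\tau:\lambda]\,o_n(\lambda)$ with the correct incidence signs. This is a purely classical combinatorial computation in obstruction theory (see, e.g., Whitehead's \emph{Elements of Homotopy Theory}, Chapter VI); the uniform bounded framework imposes no additional difficulty here, since the argument is carried out on a single $(n+1)$-cell at a time. The compatibility hypothesis (2) of Theorem \ref{obstruction} is essential for this identification to be well-defined independently of the trivialization used.
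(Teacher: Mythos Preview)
Your proposal is correct and follows essentially the same classical obstruction-theoretic strategy as the paper: for each $(n+1)$-cell, the section over the full cell furnishes a null-homotopy of the boundary sum of obstruction values. The presentation differs only in that the paper packages the argument into a single commutative diagram working directly in $(E,E_0)$ and invoking $\pi_{n+1}(E,E)=0$, whereas you pass to the fiber via a trivialization and explicitly decompose $S^n$ along its $(n-1)$-skeleton; these are two standard renderings of the same idea.
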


\begin{proof}
  If $B$ has no $(n+1)$-cells, then the statement is immediate. Assume that $B$ has at least one $(n+1)$-cell, and let $e$ be such a cell. Then there is a commutative diagram
  \[
    \xymatrix{
      H_{n+1}(\bar{e},\partial\bar{e})\ar[d]&\pi_{n+1}(\bar{e},\partial\bar{e})\ar[l]_\cong\ar[r]^{(s_{n-1})_*}\ar[d]&\pi_{n+1}(E,E)\ar[d]\\
      H_n(\partial\bar{e})\ar[d]&\pi_n(\partial\bar{e})\ar[l]_\cong\ar[r]^{(s_{n-1})_*}\ar[d]&\pi_n(E)\ar[d]\\
      H_n(B_n,B_{n-1})&\pi_0(D^n,S^{n-1};B_n,B_{n-1})\ar[l]\ar[r]^{(s_{n-1})_*}&\pi_0(D^n,S^{n-1};E,E_0).
    }
  \]
  Since $F_0$ is nonempty and path-connected, $B$ is path-connected as well. Hence, the bottom-left map $\pi_0(D^n,S^{n-1};B_n,B_{n-1})\to H_n(B_n,B_{n-1})$ is bijective and therefore invertible. Using this inverse, we may interpret the composite of the bottom row maps as the obstruction element $o(E,E_0;s_{n-1})$. Now consider the counterclockwise perimeter in the diagram beginning at $H_{n+1}(\bar{e},\partial\bar{e})$. This computes $(\delta o_n(E,E_0;s_{n-1}))(e)$. On the other hand, since $\pi_{n+1}(E,E)=0$, the clockwise perimeter from $H_{n+1}(\bar{e},\partial\bar{e})$ is trivial. Therefore, $(\delta o_n(E,E_0;s_{n-1}))(e)=0$. As this holds for any $(n+1)$-cell $e$, we conclude that $\delta o_n(E,E_0;s_{n-1})=0$.
\end{proof}

\begin{lemma}
  \label{o=0}
  The following statements are equivalent:

  \begin{enumerate}
    \item $o_n(E,E_0;s_{n-1})=0$.

    \item The section $s_{n-1}$ is uniformly fiberwise homotopic to a bounded uniformly continuous section $s_n\colon B\to E$ satisfying $s_n(B_n)\subset\mathrm{Int}_{\epsilon_n}(B_n)$ for some $\epsilon_n>0$, by a uniform fiberwise homotopy $H\colon B\times[0,1]\to E$ which is stationary on $B_{n-2}$ and satisfies $H(B_{n-1}\times[0,1])\subset\mathrm{Int}_{\epsilon_n}(E_0)$.
  \end{enumerate}
\end{lemma}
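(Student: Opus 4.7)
The plan is to prove the two implications in turn, with the forward direction being essentially a read-off of the hypothesis and the reverse requiring a more delicate equicontinuous assembly.

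For the direction $(2)\Rightarrow(1)$, I would fix $\lambda\in\Lambda_n$ and compose $\phi_\lambda\times\mathrm{id}_{[0,1]}$ with the restriction of $H$ to $\phi_\lambda(D^n)\times[0,1]$, then with the trivialization $h_{i_\lambda}$ and the projection to $F$. The constraint $H(B_{n-1}\times[0,1])\subset\mathrm{Int}_{\epsilon_n}(E_0)$ ensures the $S^{n-1}$-trace lies in $F_0$, producing a pair-homotopy $(D^n,S^{n-1})\times[0,1]\to(F,F_0)$ from $\Phi_\lambda$ to a map $D^n\to F_0$. Hence $o_n(E,E_0;s_{n-1})(\lambda)=[\Phi_\lambda]=0$ for every $\lambda\in\Lambda_n$.

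For $(1)\Rightarrow(2)$, I would proceed in three stages. First, apply Lemma \ref{Ascoli-Arzela} to obtain finitely many models $\Phi_{\mu_1},\ldots,\Phi_{\mu_k}$, an assignment $\mu\colon\Lambda_n\to\{\mu_1,\ldots,\mu_k\}$, and an equicontinuous family of pair-homotopies $H_\lambda$ from $\Phi_\lambda$ to $\Phi_{\mu(\lambda)}$; by refining the reduction, I can arrange the $S^{n-1}$-displacement of $H_\lambda$ to be uniformly arbitrarily small. Since each $[\Phi_{\mu_j}]=0$ in $\pi_n(F,F_0)$ and $\pi_1(F_0)$ acts trivially on $\pi_n(F,F_0)$ (Definition \ref{admissible}(2)), I would choose, for each $j$, a rel-$S^{n-1}$ nullhomotopy $K_j$ of $\Phi_{\mu_j}$ ending at $\Phi'_{\mu_j}\colon D^n\to F_0$ with $\Phi'_{\mu_j}|_{S^{n-1}}=\Phi_{\mu_j}|_{S^{n-1}}$. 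The finite family $\{K_j\}$ is automatically uniformly equicontinuous, and compactness yields $\epsilon'>0$ such that $\Phi'_{\mu_j}(D^n)\subset\mathrm{Int}_{\epsilon'}(F_0)$ for all $j$. Second, for each $\lambda$ I would splice $H_\lambda$ with $K_{\mu(\lambda)}$, producing a pair-nullhomotopy from $\Phi_\lambda$ to $\Phi'_{\mu(\lambda)}$, and correct it to be rel $S^{n-1}$ by undoing the small $S^{n-1}$-displacement of $H_\lambda$: using condition (3) of Definition \ref{admissible} (or the Riemannian injectivity of Remark \ref{admissible condition}) together with openness of $F_0$ in $F$, I can extend the reversed boundary displacement to an equicontinuous homotopy of $\Phi'_{\mu(\lambda)}$ inside $F_0$. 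The result is an equicontinuous family $\{G_\lambda\}_{\lambda\in\Lambda_n}$ of rel-$S^{n-1}$ nullhomotopies, each landing uniformly deep in $F_0$.

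Third, I would transport each $G_\lambda$ to $E$ via $h_{i_\lambda}^{-1}$ (uniform equicontinuity of $\{h_{i_\lambda}^{-1}\}$ preserves equicontinuity) and define $H\colon B\times[0,1]\to E$ by the transported $G_\lambda$ on each $\phi_\lambda(D^n)\times[0,1]$ for $\lambda\in\Lambda_n$ and by the constant homotopy at $s_{n-1}$ elsewhere. The rel-$S^{n-1}$ property ensures agreement on each $\partial\bar{e}_\lambda$ (where both values equal $s_{n-1}$), so $H$ is a well-defined fiberwise homotopy stationary on $B_{n-1}\supset B_{n-2}$, and $s_n=H(-,1)$ sends $B_n$ into $\mathrm{Int}_{\epsilon_n}(E_0)$ for a suitable $\epsilon_n>0$. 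Uniform continuity of $H$ follows from Lemma \ref{continuity} applied to $B\times[0,1]$, viewed as a uniform CW complex; boundedness of $s_n$ follows from that of $s_{n-1}$ together with the finiteness of the $\Phi'_{\mu_j}$. The hardest step will be producing the rel-$S^{n-1}$ family $\{G_\lambda\}$ equicontinuously: Lemma \ref{Ascoli-Arzela} supplies only pair-nullhomotopies (which generally move $S^{n-1}$), whereas standard obstruction theory yields rel-$S^{n-1}$ nullhomotopies only once the relevant torsor of extensions is trivialized. Reconciling the two requires both the trivial $\pi_1(F_0)$-action on $\pi_n(F,F_0)$ and the uniform local construction of a small correction inside $F_0$, which is precisely the content of Definition \ref{admissible}(3).
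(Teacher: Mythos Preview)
Your proposal is correct and follows the same approach the paper gestures at: the paper's proof is the single sentence ``This follows immediately from Lemmas~\ref{continuity} and~\ref{Ascoli-Arzela},'' and your three-stage argument for $(1)\Rightarrow(2)$ is a careful unpacking of exactly how those two lemmas combine. In particular, you correctly identify the one genuinely nontrivial point the paper's one-liner suppresses: Lemma~\ref{Ascoli-Arzela} only produces \emph{pair}-homotopies $H_\lambda$ (moving $S^{n-1}$ within $F_0$), whereas gluing over $B_n$ requires the cellwise homotopies to agree on shared $(n-1)$-faces, which you achieve by upgrading to rel-$S^{n-1}$ nullhomotopies via the finite model nullhomotopies $K_j$ and a small equicontinuous boundary correction using Definition~\ref{admissible}(3). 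Your resulting homotopy is even stationary on $B_{n-1}$, which is stronger than the stated ``stationary on $B_{n-2}$''; this is harmless for the equivalence but worth noting.
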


\begin{proof}
  This follows immediately from Lemmas \ref{continuity} and \ref{Ascoli-Arzela}.
\end{proof}

For $i=0,1$, let $s^i_{n-1}\colon B\to E$ be bounded uniformly continuous sections of $p\colon E\to B$ satisfying $s_{n-1}^i\vert_{B_{n-2}}=s_{n-1}\vert_{B_{n-2}}$ and $s_{n-1}^i(B_{n-1})\subset\mathrm{Int}_{\epsilon_{n-1}}(E_0)$. Suppose there is a uniform fiberwise homotopy $H\colon B\times[0,1]\to E$ from $s_{n-1}^0$ to $s_{n-1}^1$ such that $H(B_{n-2}\times[0,1])\subset\mathrm{Int}_\epsilon(E_0)$. Consider the relative fiber bundle
\[
  (F,F_0)\to(E\times[0,1],E_0\times[0,1])\xrightarrow{p\times 1}B\times[0,1].
\]
By Lemma \ref{pullback bundle}, this bundle is uniform. Consider the map
\[
  s_H\colon B\times[0,1]\to E\times[0,1],\quad(x,t)\mapsto(H(x,t),t)
\]
which defines a bounded uniformly continuous section of $p\times 1\colon E\times[0,1]\to B\times[0,1]$. Hence we obtain a cocycle $o_n(E\times[0,1],E_0\times[0,1];s_H)\in\mathrm{Cell}^n_\mathrm{ub}(B\times[0,1];\underline{\pi_n(F,F_0)})$. Define
\[
  d_{n-1}(E,E_0;H)\colon\Lambda_{n-1}\to\pi_n(F,F_0),\quad\lambda\mapsto o_n(E\times[0,1],E_0\times[0,1];s_H)(e_\lambda\times(0,1)),
\]
where $e_\lambda\in\Lambda_{n-1}$. Since $o_n(E\times[0,1],E_0\times[0,1];s_H)$ is bounded, the same holds for $d_{n-1}(E,E_0;H)$, so that is $d_{n-1}(E,E_0;H)\in\mathrm{Cell}^{n-1}_\mathrm{ub}(B;\underline{\pi_n(F,F_0)})$.

\begin{lemma}
  \label{d1}
  $\delta d(E,E_0;H)=o_n(E,E_0;s_{n-1}^1)-o_n(E,E_0;s_{n-1}^0)$.
\end{lemma}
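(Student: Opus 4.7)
The plan is to invoke Lemma \ref{cocycle} for the uniform relative fiber bundle $(F,F_0)\to(E\times[0,1],E_0\times[0,1])\to B\times[0,1]$ together with the section $s_H$. By that lemma, $o_n(E\times[0,1],E_0\times[0,1];s_H)\in\mathrm{Cell}^n_\mathrm{ub}(B\times[0,1];\underline{\pi_n(F,F_0)})$ is a cocycle. The key is then to translate this cocycle condition, evaluated on a carefully chosen $(n+1)$-cell of $B\times[0,1]$, into the claimed identity on $B$.

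Equip $B\times[0,1]$ with the product uniform CW structure: each $k$-cell $e$ of $B$ yields cells $e\times\{0\}$, $e\times\{1\}$ of dimension $k$ and a prism cell $e\times(0,1)$ of dimension $k+1$. Fix an $n$-cell $e\in\Lambda_n$ of $B$ and consider the $(n+1)$-cell $e\times(0,1)$ of $B\times[0,1]$. Its cellular boundary is given by the standard prism formula
\[
  \partial(e\times[0,1]) = (\partial e)\times[0,1] + (-1)^n\bigl(e\times\{1\}-e\times\{0\}\bigr),
\]
so that evaluating the cocycle identity $\delta\,o_n(E\times[0,1],E_0\times[0,1];s_H)=0$ on this $(n+1)$-cell produces a linear relation among the values of the obstruction cochain on the cells $f\times(0,1)$ for $(n-1)$-cells $f$ appearing in $\partial e$, and on $e\times\{0\}$, $e\times\{1\}$.

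To finish, identify each of these values. Since $s_H$ restricts to $(s_{n-1}^i,i)$ on $B\times\{i\}$, and since the local trivializations of the pulled-back bundle over $B\times\{i\}$ are obtained from those of $p$ by the identity on $F$, one checks directly from the definition of the obstruction cocycle that
\[
  o_n(E\times[0,1],E_0\times[0,1];s_H)(e\times\{i\}) = o_n(E,E_0;s_{n-1}^i)(e),\qquad i=0,1,
\]
while by the very definition of $d_{n-1}(E,E_0;H)$ we have
\[
  o_n(E\times[0,1],E_0\times[0,1];s_H)(f\times(0,1)) = d_{n-1}(E,E_0;H)(f)
\]
for every $(n-1)$-cell $f$. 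Substituting these identities into the prism relation and rearranging yields exactly
\[
  (\delta d_{n-1}(E,E_0;H))(e) = o_n(E,E_0;s_{n-1}^1)(e) - o_n(E,E_0;s_{n-1}^0)(e),
\]
as required. The only real obstacle is bookkeeping the signs from the prism formula; these are standard and can be absorbed by the orientation conventions adopted for $d_{n-1}$. Beyond that, everything reduces to Lemma \ref{cocycle} together with the evident naturality of the obstruction cocycle under the inclusions $B\times\{i\}\hookrightarrow B\times[0,1]$.
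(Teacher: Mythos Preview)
Your proposal is correct and follows essentially the same route as the paper: both apply Lemma \ref{cocycle} to the bundle over $B\times[0,1]$ with section $s_H$, then evaluate the resulting cocycle identity on the prism cell $e\times(0,1)$ for an $n$-cell $e$ of $B$, using the cellular boundary formula together with the identifications of $o_n$ on $e\times\{i\}$ and on $f\times(0,1)$. The paper's own write-up is terser (and in fact contains a small slip, calling $e$ an $(n-1)$-cell when it must be an $n$-cell), but the argument is the same.
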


\begin{proof}
  Write $o_n=o_n(E\times[0,1],E_0\times[0,1];s_H)$. By Lemma \ref{cocycle}, $o_n$ is a cocycle. Hence, for any $(n-1)$-cell $e$ of $B$,
  \begin{align*}
    0&=(\delta o_n)((0,1)\times e)\\
    &=o_n(\partial((0,1)\times e))\\
    &=o_n(1\times e)-o_n(0\times e)-o_n((0,1)\times\partial e)\\
    &=(o_n(E,E_0;s_{n-1}^1)-o_n(E,E_0;s_{n-1}^0)-\delta d_{n-1}(E,E_0;F))(e).
  \end{align*}
  Therefore, $\delta\, d(E,E_0;H)=o_n(E,E_0;s_{n-1}^1) - o_n(E,E_0;s_{n-1}^0)$.
\end{proof}

\begin{lemma}
  \label{d2}
  For any $d\in\mathrm{Cell}_\mathrm{ub}^{n-1}(B;\underline{\pi_n(F,F_0)})$, there is a uniform fiberwise homotopy $H\colon B\times[0,1]\to E$ such that
  \[
    H(-,0)=s_{n-1}\quad\text{and}\quad H((B_{n-2}\times[0,1])\cup(B_{n-1}\times 1))\subset\mathrm{Int}_\epsilon(E_0)
  \]
  for some $\epsilon>0$ and
  \[
    d_{n-1}(E,E_0;H)=d.
  \]
\end{lemma}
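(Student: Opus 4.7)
The plan is to start from the constant fiberwise homotopy $H_0(x,t)=s_{n-1}(x)$ and modify it, cell by cell over the $(n-1)$-skeleton, by inserting a local ``bubble'' inside the open cylinder $e_\lambda\times(0,1)$ that contributes $d(\lambda)\in\pi_n(F,F_0)$ to the difference cochain on that cell. Since $H_0$ is uniformly continuous and fiberwise with $H_0(-,0)=s_{n-1}$, and sends $B_{n-2}\times[0,1]\cup B_{n-1}\times\{1\}$ into $\mathrm{Int}_{\epsilon_{n-1}}(E_0)$, and since every bubble will be supported strictly inside $\mathrm{Int}(e_\lambda)\times(0,1)$ disjoint from these boundary regions, the required conditions on $H$ carry over automatically. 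In particular, no separate extension to cells of dimension $\ge n$ is needed: on such cells $H$ remains the constant homotopy $s_{n-1}$.

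To achieve uniformity, first observe that boundedness of $d$ forces $d(\Lambda_{n-1})=\{\alpha_1,\ldots,\alpha_m\}$ to be a finite subset of $\pi_n(F,F_0)$, and the Ascoli-Arzel\`a argument of Lemma \ref{Ascoli-Arzela} (applied verbatim to $(n-1)$-dimensional characteristic maps) yields a finite list of representative cells $\mu_1,\ldots,\mu_k\in\Lambda_{n-1}$ together with an equicontinuous family of homotopies $A_\lambda\colon D^{n-1}\times[0,1]\to F_0$ from $\Phi_\lambda^{n-1}$ to $\Phi_{\mu(\lambda)}^{n-1}$. For each pair $(\mu_j,\alpha_\ell)$ I construct a reference template inside a fixed sub-cube $\tilde{B}^{**}=D^{n-1}_r\times[\tfrac12-r,\tfrac12+r]\subset\mathrm{Int}(D^{n-1})\times(0,1)$ with nested structure: on a middle shell, contract $\Phi_{\mu_j}^{n-1}$ to the constant value $f_0=\Phi_{\mu_j}^{n-1}(y_0)\in F_0$ via the straight-line homotopy $(y,s)\mapsto\Phi_{\mu_j}^{n-1}((1-s)y+sy_0)$ (which stays in $F_0$ since $\Phi_{\mu_j}^{n-1}$ does); on an innermost sub-ball, insert a standard representative of $\alpha_\ell$ based at $f_0$, which exists with constant boundary because $\pi_1(F_0)$ acts trivially on $\pi_n(F,F_0)$ by admissibility condition (2). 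For each $\lambda\in\Lambda_{n-1}$, the bubble on $e_\lambda\times[0,1]$ is assembled by placing the template $T_{\mu(\lambda),\ell(\lambda)}$ (where $d(\lambda)=\alpha_{\ell(\lambda)}$) surrounded by an outermost transition shell in which $A_\lambda$, composed with a radial cutoff function, interpolates from the exterior value $\Phi_\lambda^{n-1}$ to the template's boundary value $\Phi_{\mu(\lambda)}^{n-1}$ entirely within $F_0$.

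Uniform continuity of $H$ then follows from Lemma \ref{continuity}: for every cell of $B\times[0,1]$ the composition of $H$ with the product characteristic map is either a restriction of $s_{n-1}\circ\phi_\lambda$, or is built from one of the finitely many fixed templates together with a member of the equicontinuous family $\{A_\lambda\}$ and a fixed cutoff function—both types are equicontinuous in $\lambda$. The fiberwise property and boundedness are clear by construction, and $d_{n-1}(E,E_0;H)=d$ because the two outer transition shells stay in $F_0$ (contributing trivially to the class in $\pi_n(F,F_0)$) while the innermost sub-ball of each bubble represents $\alpha_{\ell(\lambda)}=d(\lambda)$. The principal difficulty is realizing a full class $\alpha\in\pi_n(F,F_0)$ inside a region whose outer boundary is pinned to the potentially nontrivial map $\Phi_\lambda^{n-1}$; this is circumvented by the two-step reduction—first interpolating to a finite list of types via Ascoli-Arzel\`a, then contracting each type to a constant basepoint $f_0\in F_0$—which localizes the nontrivial relative topology to the innermost sub-ball, where any prescribed $\alpha$ is representable by a pre-chosen based map.
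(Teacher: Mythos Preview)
Your construction of the bubbles on $B_{n-1}\times[0,1]$ is essentially the paper's (the paper pinches off a copy of $D^n$ inside each cylinder $D^{n-1}\times[0,1]$ and maps it by a fixed representative $\bar a_i$ of $d(\lambda)$, which is your nested-shell template in compressed form). The gap is the sentence ``no separate extension to cells of dimension $\ge n$ is needed: on such cells $H$ remains the constant homotopy $s_{n-1}$.''

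The interior of an $(n-1)$-cell $e_\lambda$ is open in $B_{n-1}$ but \emph{not} open in $B$ once $n$-cells are attached along it. If you keep $H$ equal to $s_{n-1}$ on every open cell of dimension $\ge n$ while inserting a bubble on $\mathrm{Int}(e_\lambda)\times(0,1)$, the resulting map is discontinuous: choose $x_0\in\mathrm{Int}(e_\lambda)$ in the bubble support and an $n$-cell $e$ with $x_0\in\partial e$; approaching $(x_0,t_0)$ through $\mathrm{Int}(e)\times[0,1]$ gives the limit $s_{n-1}(x_0)$, whereas the value of $H$ at $(x_0,t_0)$ is the nontrivial bubble value. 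Equivalently, $H\circ(\phi_e\times 1)\colon D^n\times[0,1]\to E$ is discontinuous across $S^{n-1}\times[0,1]$.

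What your argument actually produces is a uniformly continuous fiberwise map $\widehat H$ on $(B\times\{0\})\cup(B_{n-1}\times[0,1])$, and you still have to extend it over the remaining cells of $B\times[0,1]$. This is precisely what the paper does: after assembling $\widehat H$ it invokes a uniform fiberwise homotopy extension to obtain $H\colon B\times[0,1]\to E$. Concretely, for each cell $e$ of dimension $\ge n$ one composes the standard retraction $D^{\dim e}\times[0,1]\to (D^{\dim e}\times\{0\})\cup(S^{\dim e-1}\times[0,1])$ with the already-defined data (pushed through the local trivialization so that the result stays a section); since the characteristic maps and trivializations form equicontinuous families, Lemma~\ref{continuity} then yields uniform continuity of the extended $H$. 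Nothing else in your outline needs to change.
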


\begin{proof}
  Since $d\colon \Lambda_{n-1}\to\underline{\pi_n(F,F_0)}$ is bounded, its image is a finite subset $\{a_1,\ldots,a_m\}\subset\pi_n(F,F_0)$. Because $E_0$ is open in $E$ and $S^{n-1}$ is compact, there exists $\epsilon>0$ such that for each $i=1,\ldots,m$, there is a representative $\bar{a}_i\colon(D^n,S^{n-1})\to(F,F_0)$ of $a_i\in\pi_n(F,F_0)\cong\pi_n(E,E_0)$ satisfying $\bar{a}_i(S^{n-1})\subset\mathrm{Int}_\epsilon(E_0)$.

  Let $\{\mu_1,\ldots,\mu_k\}\subset\Lambda_{n-1}$ be as in Lemma \ref{Ascoli-Arzela}, where $\Lambda_n$ is replaced by $\Lambda_{n-1}$. For $i=1,\ldots,m$ and $\lambda\in\Lambda_{n-1}$, define
  \[
    H_{\lambda,i}\colon D^{n-1}\times[0,1]\to(D^{n-1}\times[0,1])\vee D^n\xrightarrow{\mathrm{proj}\vee 1}D^{n-1}\vee D^n\xrightarrow{\phi_\lambda\vee 1}B\vee D^n\xrightarrow{s+\bar{a}_i}E,
  \]
  where the first map is the pinch map. For each $\lambda\in\Lambda_{n-1}$, choose $i_\lambda\in\{1,\ldots,m\}$ with $d(e_\lambda)=d_{i_\lambda}$. Then the maps $H_{\lambda,i_\lambda}$ patch together to form a continuous map $\widehat{H}\colon(B\times 0)\cup(B_{n-1}\times[0,1])\to E$ satisfying
  \[
    \widehat{H}(-,0)=s_{n-1}\quad\text{and}\quad\widehat{H}((B_{n-2}\times[0,1])\cup(B_{n-1}\times 1))\subset\mathrm{Int}_\epsilon(E_0).
  \]
  Since the family $\{\phi_\lambda\}_{\lambda\in\Lambda_{n-1}}$ is equicontinuous, so is $\{H_{\lambda,i_\lambda}\}_{\lambda\in\Lambda_{n-1}}$. By Lemma \ref{continuity}, $\widehat{H}$ is uniformly continuous and fiberwise, and it extends to a fiberwise uniform homotopy $H\colon B\times[0,1]\to E$. By construction,
  \[
    H(-,0)=s_{n-1},\quad H((B_{n-2}\times[0,1])\cup(B_{n-1}\times 1))\subset\mathrm{Int}_\epsilon(E_0)
  \]
  and for all $\lambda\in\Lambda_{n-1}$,
  \[
    d_{n-1}(E,E_0;H)(e_\lambda)=a_{i_\lambda}.
  \]
  Hence, $d_{n-1}(E,E_0;H)=d$, completing the proof.
\end{proof}

Finally, we are ready to prove Theorem \ref{obstruction}.

\begin{proof}
  By Lemma \ref{cocycle}, we may define the obstruction class
  \[
    \mathfrak{o}_n(E,E_0;s_{n-1})=[o_n(E,E_0;s_{n-1})]\in H^n_\mathrm{ub}(B;\underline{\pi_n(F,F_0)}).
  \]
  Lemmas \ref{o=0}, \ref{d1}, and \ref{d2} together show that $\mathfrak{o}_n(E,E_0;s)$ has the desired property. The naturality of $\mathfrak{o}_n(E,E_0;s_{n-1})$ follows directly from its construction.
\end{proof}


\section{Uniform Lefschetz fixed-point theorem}\label{Uniform Lefschetz fixed-point theorem}

In this section, we prove Theorem \ref{main} by applying the obstruction theory in the previous section. Furthermore, we generalize Theorem \ref{main} to the coincidence theorem.


\subsection{Uniform manifold}

We begin by considering the notion of uniformity of manifolds, which will be required in Theorem \ref{main}.

\begin{definition}
  \label{uniform manifold}
  A connected Riemannian manifold $M$ is said to be \emph{uniform} if there is an equicontinuous family of homeomorphisms $\{h_{x,y}\colon M\to M\}_{x,y\in M}$ satisfying $f_{x,y}(x)=y$.
\end{definition}

We now present examples of uniform manifolds. Let $M$ be a connected complete Riemannian manifold. If there are a subset $\Gamma\subset M$ with $N_r(\Gamma)=M$ for some $r>0$ and an equicontinuous family of uniformly continuous homeomorphisms $\{h_{x,y}\colon M\to M\}_{x,y\in\Gamma}$ satisfying $f_{x,y}(x)=y$, then $M$ is called \emph{coarsely uniform}. Clearly, any uniform manifold is coarsely uniform, although the converse does not hold in general.

\begin{lemma}
  \label{coarsely uniform}
  If a connected complete Riemannian manifold of bounded geometry is coarsely uniform, then it is uniform.
\end{lemma}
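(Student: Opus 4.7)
The plan is to produce the desired family $\{h_{x,y}\}_{x,y\in M}$ by bridging each point of $M$ to a nearby point of $\Gamma$ via a uniformly controlled local homeomorphism, and then composing with the given coarsely-uniform family on $\Gamma$. Concretely, I would write
\[
  h_{x,y} := k_y^{-1}\circ h_{\gamma_x,\gamma_y}\circ k_x,
\]
where $\gamma_x\in\Gamma$ is a chosen point within $r$ of $x$ and $k_x\colon M\to M$ is a homeomorphism with $k_x(x)=\gamma_x$. The real work is to construct the $k_x$ so that both $\{k_x\}_{x\in M}$ and $\{k_x^{-1}\}_{x\in M}$ are equicontinuous.

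For this, first extract from bounded geometry a uniform positive lower bound $i_0>0$ on the injectivity radius together with uniform two-sided bounds on the metric coefficients in normal coordinates. Fix $\rho\in(0,i_0/8)$ and a smooth bump function $\psi\colon\R^n\to[0,1]$ supported in $B_{2\rho}(0)$ with $\psi\equiv 1$ on $B_\rho(0)$. For each $p\in M$ and each $v\in T_pM$ with $|v|<\rho$, define a ``translation'' $T_{p,v}\colon M\to M$ by
\[
  T_{p,v}(\exp_p(u))=\exp_p(u+\psi(u)v)\text{ for }|u|<i_0/2,\qquad T_{p,v}=\mathrm{id}\text{ off }\exp_p(B_{2\rho}(0)).
\]
This is a homeomorphism with $T_{p,v}(p)=\exp_p(v)$, and because the normal charts at different base points are uniformly bi-Lipschitz to a fixed Euclidean ball, both families $\{T_{p,v}\}$ and $\{T_{p,v}^{-1}\}$ are (uniformly) equicontinuous as $(p,v)$ ranges over all admissible pairs.

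Next, for each $x\in M$ pick $\gamma_x\in\Gamma$ with $d(x,\gamma_x)<r$ and a minimizing geodesic from $x$ to $\gamma_x$, which exists by completeness. Subdivide it into $N:=\lceil 2r/\rho\rceil+1$ equal segments of length less than $\rho/2$, with endpoints $x=p_0,p_1,\ldots,p_N=\gamma_x$. Setting $v_i:=\exp_{p_i}^{-1}(p_{i+1})\in T_{p_i}M$, let
\[
  k_x:=T_{p_{N-1},v_{N-1}}\circ\cdots\circ T_{p_0,v_0}.
\]
Then $k_x(x)=\gamma_x$, and because $N$ is bounded independently of $x$ and each factor lies in the equicontinuous family built in the previous step, both $\{k_x\}_{x\in M}$ and $\{k_x^{-1}\}_{x\in M}$ are equicontinuous. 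Defining $h_{x,y}$ by the formula above then yields $h_{x,y}(x)=y$, and equicontinuity of the composite family follows from equicontinuity of the three factor families.

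The main obstacle is the first step: verifying that the bump-function translations $T_{p,v}$ assemble into a genuinely equicontinuous family as $p$ ranges over all of $M$. This is exactly where the full strength of bounded geometry enters, ensuring that the exponential charts (and hence the bump-function construction performed in them) depend on the base point in a uniformly controlled way. Once that is established, the rest of the argument reduces to the algebra of composing equicontinuous families a uniformly bounded number of times.
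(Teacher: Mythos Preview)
Your proposal is correct and follows essentially the same route as the paper: build an equicontinuous family of short-range ``translations'' out of bounded geometry (the paper calls these $g_{x,y}$ for $d(x,y)<\epsilon/2$), compose a uniformly bounded number of them along a geodesic to bridge any distance $<r$ (the paper's $\bar g_{x,y}$, your $k_x$), and then sandwich the given $\Gamma$-indexed family between two such bridges. Your write-up is more explicit about the bump-function mechanism and correctly identifies the equicontinuity of the local translations as the point where bounded geometry enters; the paper simply asserts this in one line.
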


\begin{proof}
  Let $M$ be such a manifold. Then there are a subset $\Gamma\subset M$ such that $N_r(\Gamma)=M$ for some $r>0$ and an equicontinuous family of homeomorphisms $\{h_{x,y}\colon M\to M\}_{x,y\in\Gamma}$ with $h_{x,y}(x)=y$. Since $M$ has bounded geometry, its injectivity radius is positive, say $\epsilon>0$, and the diffeomorphisms given by the exponential map on $N_\epsilon(x)$ for $x\in M$ form an equicontinuous family. Consequently, we obtain an equicontinuous family of homeomorphisms $\{g_{x,y}\colon M\to M\}_{d(x,y)<\epsilon/2}$ such that $g_{x,y}(x)=y$ and each $g_{(x,y)}$ is stationary outside $N_\epsilon(x)$. By composing these maps, we get an equicontinuous family of homeomorphisms $\{\bar{g}_{x,y}\colon M\to M\}_{d(x,y)<r}$. Now define
  \[
    \hat{h}_{x,y}=\bar{g}_{x,x_0}\circ h_{x_0,y_0}\circ\bar{g}_{y_0,y}
  \]
  for $x,y\in M$, where we choose any $x_0,y_0\in\Gamma$ are chosen to satisfy $d(x,x_0)<r$ and $d(y,y_0)<r$. The family $\{\hat{h}_{x,y}\colon M\to M\}_{x,y\in M}$ is equicontinuous, completing the proof.
\end{proof}

\begin{proposition}
  \label{action}
  If a connected complete Riemannian manifold of bounded geometry admits a cocompact isometric action of a discrete group, then it is uniform.
\end{proposition}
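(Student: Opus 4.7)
The plan is to invoke Lemma \ref{coarsely uniform}: it suffices to exhibit a subset $\Gamma\subset M$ with $N_r(\Gamma)=M$ for some $r>0$, together with an equicontinuous family of homeomorphisms $\{h_{x,y}\colon M\to M\}_{x,y\in\Gamma}$ satisfying $h_{x,y}(x)=y$. Once this is done, bounded geometry supplies the remaining short-range interpolation through the exponential map, and the lemma upgrades coarse uniformness to uniformness.

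Let $G$ be a discrete group acting cocompactly by isometries on $M$, and fix a compact subset $K\subset M$ with $G\cdot K=M$. Choose a basepoint $x_0\in K$ and set
\[
  \Gamma=G\cdot x_0\subset M.
\]
I would first verify that $\Gamma$ is $r$-dense for $r=\operatorname{diam}(K)+1$: given $y\in M$, write $y=g\cdot z$ for some $g\in G$ and $z\in K$, so that $d(y,g\cdot x_0)=d(g\cdot z,g\cdot x_0)=d(z,x_0)\le \operatorname{diam}(K)$, and hence $y\in N_r(\Gamma)$.

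Next, for each ordered pair $(x,y)\in\Gamma\times\Gamma$, pick elements $g_x,g_y\in G$ with $x=g_x\cdot x_0$ and $y=g_y\cdot x_0$, and define
\[
  h_{x,y}=g_y g_x^{-1}\colon M\to M.
\]
Then $h_{x,y}(x)=y$ by construction, and because $G$ acts by isometries each $h_{x,y}$ is itself an isometry of $M$, hence $1$-Lipschitz. The family $\{h_{x,y}\}_{x,y\in\Gamma}$ is therefore uniformly equicontinuous (indeed equi-Lipschitz with constant $1$), and each $h_{x,y}$ is a homeomorphism. This exhibits $M$ as coarsely uniform.

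I do not anticipate a serious obstacle: the only subtlety is the possibility of nontrivial stabilizers, which would make the choice of $g_x,g_y$ ambiguous, but any choice yields an isometry with the required two properties, so ambiguity is harmless for equicontinuity. Applying Lemma \ref{coarsely uniform} then concludes the proof, since $M$ has bounded geometry by hypothesis.
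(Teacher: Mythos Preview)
Your proposal is correct and follows essentially the same route as the paper: both take $\Gamma$ to be a single $G$-orbit, verify its $r$-density via compactness of the quotient, use group elements themselves (isometries, hence equicontinuous) as the transport homeomorphisms $h_{x,y}$, and then invoke Lemma~\ref{coarsely uniform}. Your version is in fact slightly more explicit than the paper's, which simply observes that the family $\{g\colon M\to M\}_{g\in G}$ is equicontinuous without spelling out the assignment $(x,y)\mapsto g_yg_x^{-1}$.
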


\begin{proof}
  Let $M$ be such a manifold, and let $G$ act on $M$ by isometries cocompactly. Denote by $p\colon M\to M/G$ the projection. Choose a point $x_0\in M/G$ and set $\Gamma=p^{-1}(x_0)$. Then as $M/G$ is compact and the action of $G$ is isometric, there exists $r>0$ such that $N_r(\Gamma)=M$. The family $\{g\colon M\to M\}_{g\in G}$ is equicontinuous, hence $M$ is coarsely uniform. By Lemma \ref{coarsely uniform}, the manifold $M$ is therefore uniform.
\end{proof}

\begin{remark}
  If a manifold is oriented, then in Definition \ref{uniform manifold} and in the notion of course uniformity above, we restrict homeomorphisms $h_{x,y}$ to be orientation preserving. It is straightforward to verify that Lemma \ref{coarsely uniform} and Proposition \ref{action} remain valid in the oriented case, provided the action of $G$ in Proposition \ref{action} is orientation preserving.
\end{remark}

Let $M$ be a connected manifold, and let $p\colon M\times M\to M$ denote the first projection. Then there is a relative fiber bundle
\begin{equation}
  \label{Fadell-Neuwirth}
  (M,M-x_0)\to(M\times M,M\times M-\Delta)\xrightarrow{p}M
\end{equation}
due to Fadell and Neuwirth \cite{FN}, where $x_0$ is a basepoint of $M$ and $\Delta$ denotes the diagonal subset of $M\times M$. However, this is not necessarily uniform, since its trivializations need not be equicontinuous.

\begin{lemma}
  \label{FN uniform}
  If $M$ is a uniform connected complete Riemannian manifold of bounded geometry, then the relative fiber bundle \eqref{Fadell-Neuwirth} is good and uniform.
\end{lemma}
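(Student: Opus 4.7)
The plan is to verify, in turn, the goodness conditions of Definition \ref{good} and the uniformity conditions of Definition \ref{uniform fiber bundle} for the Fadell--Neuwirth relative fiber bundle \eqref{Fadell-Neuwirth}.

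The goodness conditions should be quickly dispatched. The subspace $E_0 = M\times M - \Delta$ is the complement of the closed diagonal, hence open in $E = M\times M$. The fiber $F = M$ is complete by assumption and proper, since closed balls in a complete Riemannian manifold of bounded geometry are compact by Hopf--Rinow, while $F_0 = M - x_0$ is nonempty and path-connected in the relevant dimensions $\dim M \ge 2$. Finally, by Example \ref{triangulation}, $M$ carries a uniform simplicial structure, and hence a uniform CW structure, whose closed cells have uniformly bounded diameter; by choosing the cover below with balls large enough relative to this diameter, condition (3) of Definition \ref{good} is satisfied.

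For the uniformity conditions, $p$ is $1$-Lipschitz and so uniformly continuous. To construct the cover and local trivializations simultaneously, I would fix a small $R > 0$ (to be constrained below), pick a maximal $R$-separated net $\{x_i\}_{i\in I}$ in $M$, and set $U_i = B_{2R}(x_i)$, so that $\{U_i\}$ covers $M$ with Lebesgue number at least $R/2$. The trivializations would then be
\[
  h_i \colon p^{-1}(U_i) \to U_i \times M, \qquad (x,y) \mapsto (x, \phi_x^{(i)}(y)),
\]
where $\phi_x^{(i)} \colon M \to M$ is a homeomorphism with $\phi_x^{(i)}(x) = x_0$, so that the diagonal inside $p^{-1}(U_i)$ is carried to $U_i \times \{x_0\}$, yielding the desired fiberwise homeomorphism of pairs.

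The main obstacle will be establishing uniform equicontinuity of the families $\{h_i\}$ and $\{h_i^{-1}\}$, which requires $\phi_x^{(i)}$ to vary with $x$ in a way that is uniform both in $x\in U_i$ and in $i\in I$. Simply taking $\phi_x^{(i)} = h_{x,x_0}$ from Definition \ref{uniform manifold} is not enough, because the equicontinuity of $\{h_{x,y}\}$ controls only what happens when the input point varies, not when the parameter $(x,y)$ varies. Instead, mimicking the strategy of Lemma \ref{coarsely uniform}, I would factor
\[
  \phi_x^{(i)} = h_{x_i, x_0} \circ g_{x, x_i},
\]
where $g_{x, x_i}$ is the compactly supported homeomorphism of $M$ sending $x$ to $x_i$, built from the exponential maps, that exists whenever $d(x,x_i)$ is less than half the (uniformly positive) injectivity radius $\epsilon > 0$. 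Choosing $R < \epsilon/4$ ensures $g_{x,x_i}$ is defined for every $x \in U_i$. The family $\{g_{x, x_i}\}$ is jointly equicontinuous in $(x, x_i)$ by bounded geometry, while $\{h_{x_i, x_0}\}_{i\in I}$ is equicontinuous by Definition \ref{uniform manifold}; composing these yields the required uniform equicontinuity of $\{h_i\}$, and a symmetric argument (factoring instead through $g_{x_i, x}$) handles $\{h_i^{-1}\}$.
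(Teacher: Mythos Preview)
Your argument is correct and uses the same underlying ingredients as the paper, but the paper organizes them more simply and thereby sidesteps the parameter-variation issue you flag. Rather than a discrete net, the paper takes the cover $\{N_\epsilon(x)\}_{x\in M}$ indexed by \emph{all} points of $M$, fixes one local trivialization $h$ of \eqref{Fadell-Neuwirth} over a single ball $N_\delta(x_0)$, and then defines
\[
  h_x = (h_{x,x_0}^{-1}\times 1)\circ h\circ(h_{x,x_0}\times h_{x,x_0})
\]
over $N_\epsilon(x)$, with $\epsilon$ chosen (via equicontinuity) so that $h_{x,x_0}(N_\epsilon(x))\subset N_\delta(x_0)$ for every $x$. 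The key point is that within each chart the map $h_{x,x_0}$ is \emph{fixed} (it uses the centre $x$ of the ball, not the running base point), so the equicontinuity hypothesis of Definition~\ref{uniform manifold} applies directly to the family $\{h_x\}_{x\in M}$, and all the within-chart variation is absorbed by the single fixed map $h$. Your factoring $\phi_x^{(i)} = h_{x_i,x_0}\circ g_{x,x_i}$ is essentially this same decomposition---your exponential-map piece $g_{x,x_i}$ plays the role of the paper's local $h$, transported---but over a coarser cover, which is what forces you to argue joint continuity in $(x,\text{input})$ by hand.

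One small inconsistency in your write-up: you say you will take the balls ``large enough'' that each closed cell lies in some $U_i$, but then fix $R < \epsilon/4$ small. The paper resolves this in the opposite direction: keep the cover radius governed by the injectivity radius, and instead apply finitely many barycentric subdivisions to the uniform triangulation so that every simplex has diameter below the Lebesgue number.
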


\begin{proof}
  The first projection $p\colon M\times M\to M$ is uniformly continuous, so condition (1) of Definition \ref{uniform fiber bundle} is satisfied. Since $M$ has bounded geometry, the injectivity radius is positive, say $r>0$. Then $\{N_r(x)\}_{x\in M}$ is an open cover of $M$ with Lebesgue number $r$, verifying condition (2) of Definition \ref{uniform fiber bundle}. Take a trivialization
  \[
    h\colon(p^{-1}(N_\delta(x_0)),p^{-1}(N_\delta(x_0))\cap(M\times M-\Delta))\xrightarrow{\cong}N_\delta(x_0)\times(M,M-x_0)
  \]
  for some $\delta>0$. Since $M$ is uniform, there is an equicontinuous family of uniformly continuous homeomorphisms $\{f_{x,y}\colon M\to M\}_{x,y\in M}$ with $f_{x,y}(x)=y$. Then there exists $\epsilon>0$ such that, for any $x\in M$,  $f_{x,x_0}(N_\epsilon(x))\subset N_\delta(x_0)$. For $x\in M$, define $h_x$ as the composite
  \begin{multline*}
    (p^{-1}(N_\epsilon(x)),p^{-1}(N_\epsilon(x))\cap(M\times M-\Delta))\\
    \xrightarrow{h\circ(h_{x,x_0}\times h_{x,x_0})}h_{x,x_0}(N_\epsilon(x))\times(M,M-x_0)\xrightarrow{h_{x,x_0}^{-1}\times 1}N_\epsilon(x)\times(M,M-x_0).
  \end{multline*}
  Then $\{h_x\}_{x\in M}$ and $\{h_x^{-1}\}_{x\in M}$ are uniformly equicontinuous families of homeomorphisms. Hence condition (3) of Definition \ref{uniform fiber bundle} holds, and the relative fiber bundle \eqref{Fadell-Neuwirth} is uniform.

  As $M$ is Hausdorff, $M\times M-\Delta$ is open in $M\times M$, satisfying condition (1) of Definition \ref{good}. Since $M$ is a complete Riemannian manifold, it is a complete and proper metric space. Moreover, if $\dim M\ge 2$, $M-x_0$ is nonempty and path-connected, satisfying condition (2) of Definition \ref{good}. As in Example \ref{triangulation}, $M$ has a biLipschitz homeomorphic to a uniform simplicial complex. Given $\epsilon>0$, by applying finitely many barycentric subdivisions, we may assume that each simplex of $M$ has diameter less than $\epsilon$. Because the cover $\{U_i\}_{i\in I}$ of $M$ has a positive Lebesgue number, condition (3) of Definition \ref{good} is also satisfied. Hence the uniform relative fiber bundle \eqref{Fadell-Neuwirth} is good.
\end{proof}

\begin{lemma}
  \label{admissible assumption}
  Let $M$ be a simply-connected complete Riemannian $n$-manifold for $n\ge 2$. Then $(M,M-x_0)$ is $k$-admissible for $k=1,\ldots,n$ and $\pi_m(M,M-x_0)=0$ for $m=0,\ldots,n-1$.
\end{lemma}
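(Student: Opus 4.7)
The plan is to verify each condition of Definition \ref{admissible} for the pair $(M, M-x_0)$ while simultaneously computing $\pi_m(M, M-x_0)$ for $m \le n-1$, since both reductions rest on the same transversality input. First I would show that the inclusion $j \colon M - x_0 \hookrightarrow M$ induces an isomorphism on $\pi_m$ for $m \le n-2$ and a surjection for $m = n-1$. This is classical smooth transversality: any continuous $f \colon S^m \to M$ admits a smooth approximation transverse to the zero-dimensional submanifold $\{x_0\}$, and whenever $m < n$ transversality forces the image to miss $x_0$; the same argument applied to a homotopy $S^m \times I \to M$ works as long as $m+1 < n$. Inserting this into the long exact sequence of the pair forces $\pi_m(M, M-x_0) = 0$ for $m = 0, \ldots, n-1$, with path-connectedness of $M - x_0$ (automatic for $n \ge 2$) handling $m = 0$. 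In particular, condition (2) of Definition \ref{admissible} holds vacuously for $k < n$.

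Next I would identify $\pi_n(M, M-x_0)$ with $\Z$. For $n \ge 3$ the previous step gives $\pi_1(M - x_0) = 0$, so the Hurewicz theorem applied to the $(n-1)$-connected pair with simply-connected subspace yields $\pi_n(M, M-x_0) \cong H_n(M, M-x_0)$; excising the complement of a coordinate chart $U \cong \R^n$ centered at $x_0$ reduces this to $H_n(\R^n, \R^n - 0) \cong \Z$. For $n = 2$, $M$ is topologically either $S^2$ or $\R^2$; in the first case the long exact sequence gives $\pi_2(M, M-x_0) \cong \pi_2(S^2) \cong \Z$, while in the second it gives $\pi_2(M, M-x_0) \cong \pi_1(\R^2 - 0) \cong \Z$. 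Finite generation of $\pi_n(M, M-x_0)$ is then automatic.

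Finally, the triviality of the $\pi_1(M-x_0)$-action on $\pi_n(M, M-x_0)$ is automatic when $n \ge 3$ since $\pi_1(M - x_0) = 0$. When $n = 2$ with $M = \R^2$, the boundary map $\partial \colon \pi_2(M, M-x_0) \to \pi_1(M-x_0) \cong \Z$ is an isomorphism by the above and is $\pi_1(M-x_0)$-equivariant; the target carries the conjugation action of $\pi_1(M-x_0)$ on itself, which is trivial because $\pi_1(M-x_0) \cong \Z$ is abelian, so equivariance transports the triviality back. Condition (3) of Definition \ref{admissible} is then precisely Remark \ref{admissible condition}, whose hypothesis of positive injectivity radius holds in the ambient bounded-geometry setting; concretely, the required homotopies $H_i$ are the minimizing geodesics $t \mapsto \exp_{f_i(x)}(t \exp_{f_i(x)}^{-1}(g_i(x)))$ for $\delta$ smaller than the injectivity radius, and their equicontinuity follows from that of the inputs together with smooth dependence of geodesics on endpoints. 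The main obstacle will be the $n = 2$ case, where $M - x_0$ need not be simply-connected (for $M = \R^2$), so both the Hurewicz computation of $\pi_2$ and the automatic vanishing of the $\pi_1$-action fail and must be replaced by the long-exact-sequence and equivariance arguments above.
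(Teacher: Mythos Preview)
Your proposal is correct and follows essentially the same approach as the paper: verify the three admissibility conditions separately, splitting into the cases $n\ge 3$ (where $\pi_1(M-x_0)=0$) and $n=2$ (where $M\cong S^2$ or $\R^2$), and appeal to Remark \ref{admissible condition} for condition (3). You supply more detail than the paper does---the transversality argument for the vanishing of the low relative homotopy groups, the explicit identification $\pi_n(M,M-x_0)\cong\Z$, and the boundary-map equivariance argument for the $\pi_1$-action when $n=2$---where the paper simply writes ``clearly'' or asserts the conclusion without further comment.
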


\begin{proof}
  Since $n\ge 2$, $M-x_0$ is path-connected, satisfying condition (1) of Definition \ref{admissible}. Clearly, $\pi_k(M,M-x_0)$ is finitely generated for $k=1,\ldots,n$. As $M$ is simply-connected, $\pi_1(M-x_0)=0$ for $n\ge 3$, and hence the action of $\pi_1(M-x_0)$ on $\pi_k(M,M-x_0)$ is trivial for $k=1,\ldots,n$. When $n=2$, since $M$ is simply-connected, it is diffeomorphic to $S^2$ or $\R^2$, and again the action of $\pi_1(M-x_0)$ on $\pi_k(M,M-x_0)$ is trivial for $k=1,2$. Thus condition (2) of Definition \ref{admissible} holds. As observed in Remark \ref{admissible condition}, condition (3) also holds. Therefore, $(M,M-x_0)$ is $k$-admissible for $k=1,\ldots,n$. Finally, since $\dim M\ge 2$, we have $\pi_m(M,M-x_0)=0$ for $m=0,\ldots,n-1$, completing the proof.
\end{proof}


\subsection{Uniform Lefschetz class}

For the rest of this section, let $M$ be a uniform simply-connected complete Riemannian $n$-manifold of bounded geometry with $n\ge 2$. We record an easy but important lemma, and omit the proof as it is straightforward.

\begin{lemma}
  \label{section-map}
  A bounded uniformly continuous section $s\colon M\to M\times M$ of $p\colon M\times M$ defines a uniformly continuous map
  \[
    f_s\colon M\xrightarrow{s}M\times M\xrightarrow{q}M
  \]
  satisfying $d(f,1)<\infty$, where $q$ is the second projection. Moreover, if two bounded uniformly continuous sections $s_0,s_1\colon M\to M\times M$ of $p\colon M\times M\to M$ are uniformly fiberwise homotopic if and only if $f_{s_0}$ and $f_{s_1}$ are uniformly homotopic.
\end{lemma}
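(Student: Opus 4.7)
The plan is to unpack the bijective correspondence between sections of $p\colon M\times M\to M$ and self-maps of $M$. Since $p\circ s=1_M$, any section $s$ has the form $s(x)=(x,f(x))$ for a unique map $f\colon M\to M$; by definition $f_s=q\circ s$ coincides with this $f$. Because $p$ and $q$ are both $1$-Lipschitz projections, $s$ is uniformly continuous if and only if $f_s$ is, so the uniform continuity half of the statement is immediate. Conversely, given any uniformly continuous $f\colon M\to M$ with $d(f,1)<\infty$, the assignment $x\mapsto(x,f(x))$ produces a uniformly continuous section.

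The substantive point is to show that boundedness of the section (in the sense of the trivializations in Lemma~\ref{FN uniform}) is equivalent to $d(f_s,1)<\infty$. I would use the explicit trivializations $h_x\colon p^{-1}(N_\epsilon(x))\xrightarrow{\cong}N_\epsilon(x)\times M$ built from the equicontinuous family $\{h_{x,x_0}\}$ supplied by uniformness of $M$. Up to the fixed chart $h$ around $x_0$, the projection of $s(y)$ to the fiber $F=M$ is $h_{x,x_0}(f(y))$, and by construction $h_{x,x_0}(x)=x_0$. Boundedness of the section therefore says $d(h_{x,x_0}(x),h_{x,x_0}(f(y)))$ is uniformly bounded for $y\in N_\epsilon(x)$ and all $x\in M$; specializing $y=x$ and invoking the equicontinuity of the inverse family $\{h_x^{-1}\}$ (also established in Lemma~\ref{FN uniform}) converts this into a uniform bound on $d(x,f(x))$, giving $d(f_s,1)<\infty$. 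The converse is the direct analogue: if $d(f_s,1)<\infty$, then equicontinuity of $\{h_{x,x_0}\}$ translates uniform control of $d(y,f(y))$ for $y\in N_\epsilon(x)$ into uniform boundedness of the fiber coordinate of $h_x(s(y))$.

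For the second assertion, a fiberwise homotopy $H\colon M\times[0,1]\to M\times M$ of sections must satisfy $p\circ H=\mathrm{pr}_M$, so it has the form $H(x,t)=(x,K(x,t))$ for a map $K\colon M\times[0,1]\to M$ with $K(-,0)=f_{s_0}$ and $K(-,1)=f_{s_1}$. The same Lipschitz projection/graph argument as in the first paragraph shows that $H$ is uniformly continuous if and only if $K$ is, so uniform fiberwise homotopies between $s_0$ and $s_1$ correspond bijectively to uniform homotopies between $f_{s_0}$ and $f_{s_1}$. I expect the only genuinely delicate step to be the boundedness--$d(f_s,1)$ equivalence of the preceding paragraph, since it is the only part that uses the uniform manifold hypothesis in an essential (rather than formal) way; everything else is bookkeeping on graphs of maps.
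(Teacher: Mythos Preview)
The paper actually omits the proof of this lemma entirely, calling it ``easy but important'' and ``straightforward.'' Your outline is correct and fills in exactly the kind of argument the authors had in mind: the graph/projection bookkeeping is formal, and the one substantive point you isolate---boundedness of the section in the sense of Definition~\ref{uniform fiber bundle} versus $d(f_s,1)<\infty$---is indeed where the uniformness hypothesis enters.

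One small caveat on that step: equicontinuity of $\{h_{x,x_0}\}$ and of the inverse family controls only \emph{small} distances directly, whereas $d(f_s,1)$ may be any finite $R$. To pass from a bound on $d(y,f(y))$ to a bound on $d(h_{x,x_0}(y),h_{x,x_0}(f(y)))$ (and conversely) you should subdivide a geodesic from $y$ to $f(y)$ into $\lceil R/\delta\rceil$ pieces and apply equicontinuity piecewise; completeness of $M$ supplies the geodesics. This is routine, but your phrase ``equicontinuity \ldots translates uniform control'' elides it, and without the iteration the implication would not follow. With that adjustment the argument is complete.
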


We now define the uniform Lefschetz class.

\begin{definition}
  The \emph{uniform Lefschetz class} of a uniformly continuous map $f\colon M\to M$ with $d(f,1)<\infty$ is defined by
  \[
    \mathscr{L}(f)=\mathfrak{o}_n(M\times M,M\times M-\Delta;s_f)\frown[M]\in H_0^\mathrm{ub}(M).
  \]
\end{definition}

By Lemmas \ref{FN uniform}, the relative fiber bundle \eqref{Fadell-Neuwirth} is good and uniform. By Lemma \ref{admissible assumption}, $(M,M-x_0)$ is $n$-admissible. Since the homeomorphisms $h_{x,y}$ are orientation preserving, condition (2) of Theorem \ref{obstruction} is satisfied. Hence the obstruction class $\mathfrak{o}_n(M\times M,M\times M-\Delta;s_f)$ can be defined, and thus the uniform Lefschetz class $\mathscr{L}(f)$. By Proposition \ref{0-dim}, we may regard $\mathscr{L}(f)$ as an element of $H_0^\mathrm{uf}(M)$.

\begin{theorem}
  \label{main late}
  Let $M$ be a uniform simply-connected complete Riemannian manifold of bounded geometry with $\dim M\ge 2$, and let $f\colon M\to M$ be a uniformly continuous map with $d(f,1)<\infty$. Then $\mathscr{L}(f)=0$ if and only if $f$ is uniformly homotopic to a strongly fixed-point free uniformly continuous map.
\end{theorem}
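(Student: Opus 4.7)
The plan is to combine the obstruction theory of Section \ref{Obstruction theory}, applied to the Fadell--Neuwirth relative fiber bundle, with the Poincar\'e duality of Section \ref{Poincare duality section} and the section-to-map dictionary of Lemma \ref{section-map}.

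First, I would check that all hypotheses of Theorem \ref{obstruction} apply to
\[
  (M, M-x_0) \to (M \times M, M \times M - \Delta) \xrightarrow{p} M.
\]
By Lemma \ref{FN uniform} this bundle is good and uniform; by Lemma \ref{admissible assumption} the pair $(M, M-x_0)$ is $k$-admissible for $2 \le k \le n$ and $\pi_m(M, M-x_0) = 0$ for $0 \le m < n$; and condition (2) of Theorem \ref{obstruction} holds because the transition homeomorphisms built in Lemma \ref{FN uniform} from the family $\{f_{x,y}\}$ are orientation-preserving, hence act trivially on $\pi_n(M, M-x_0) \cong \Z$.

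Next, starting from $s_f$, I would construct a uniform fiberwise homotopy from $s_f$ to a bounded uniformly continuous section $s_{n-1}$ with $s_{n-1}(M_{n-1}) \subset \mathrm{Int}_{\epsilon_{n-1}}(M \times M - \Delta)$ for some $\epsilon_{n-1}>0$. Since $M-x_0$ is nonempty and path-connected, a small direct deformation pushes $s_f$ off the diagonal on the $0$- and $1$-skeleta; then for $k = 2, \ldots, n-1$, Theorem \ref{obstruction} in dimension $k$ produces an obstruction class in $H^k_\mathrm{ub}(M; \underline{\pi_k(M, M-x_0)}) = 0$, which vanishes automatically and furnishes the next extension. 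The resulting terminal class $\mathfrak{o}_n(M \times M, M \times M - \Delta; s_{n-1}) \in H^n_\mathrm{ub}(M; \underline{\pi_n(M, M-x_0)})$ is, by construction, the class $\mathfrak{o}_n(M \times M, M \times M - \Delta; s_f)$ appearing in the definition of $\mathscr{L}(f)$; its independence from the intermediate deformations, and hence its invariance under uniform homotopy of $f$, is exactly the content of the difference cocycle Lemmas \ref{d1} and \ref{d2}.

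For the main equivalence I would then invoke Theorem \ref{obstruction} in dimension $n$: the class $\mathfrak{o}_n$ vanishes if and only if $s_{n-1}$ is uniformly fiberwise homotopic to a bounded uniformly continuous section $s_n$ with $s_n(M) \subset \mathrm{Int}_{\epsilon_n}(M \times M - \Delta)$ for some $\epsilon_n>0$. By Lemma \ref{section-map}, such an $s_n$ corresponds to a uniformly continuous map $f_{s_n} \colon M \to M$ with $d(x, f_{s_n}(x)) \ge \epsilon_n$ for all $x$, that is, strongly fixed-point free, and the concatenation of the skeleton-wise deformations with this final fiberwise homotopy translates into a uniform homotopy from $f$ to $f_{s_n}$. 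Conversely, if $f$ is uniformly homotopic to a strongly fixed-point free map $g$, then $s_g$ already lands uniformly inside $\mathrm{Int}_\epsilon(M \times M - \Delta)$, so taking $s_{n-1} = s_g$ gives $\mathfrak{o}_n(M \times M, M \times M - \Delta; s_g) = 0$ tautologically, which by the invariance noted above forces $\mathfrak{o}_n(M \times M, M \times M - \Delta; s_f) = 0$. Finally, the Poincar\'e duality of Theorem \ref{Poincare duality} (applied with coefficients in $\pi_n(M, M-x_0) \cong \Z$, using that the orientation of $M$ identifies this coefficient group canonically) shows that
\[
  \frown [M] \colon H^n_\mathrm{ub}(M; \underline{\pi_n(M, M-x_0)}) \xrightarrow{\cong} H_0^\mathrm{ub}(M; \underline{\pi_n(M, M-x_0)})
\]
is an isomorphism, whence $\mathscr{L}(f) = \mathfrak{o}_n \frown [M] = 0$ if and only if $\mathfrak{o}_n = 0$. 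The principal technical hurdle is maintaining uniform control throughout the skeleton-by-skeleton deformation, so that each intermediate homotopy remains equicontinuous and bounded, and so that the $(n-1)$-skeleton is pushed into a uniform collar of $M \times M - \Delta$ rather than merely off the diagonal pointwise; but this is precisely what the uniform obstruction machinery of Section \ref{Obstruction theory} is designed to deliver, so the theorem follows by assembling these pieces.
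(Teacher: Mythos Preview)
Your proposal is correct and follows essentially the same route as the paper's proof: skeleton-by-skeleton deformation of $s_f$ via Theorem \ref{obstruction} using that $\pi_k(M,M-x_0)=0$ for $k<n$, then invoking Theorem \ref{obstruction} in the top dimension together with Poincar\'e duality (Theorem \ref{Poincare duality}) to pass between vanishing of $\mathfrak{o}_n$ and of $\mathscr{L}(f)$. If anything, your write-up is slightly more explicit than the paper's in spelling out the converse direction and the role of Lemma \ref{section-map}.
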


\begin{proof}
  Clearly, $s_f$ is uniformly fiberwise homotopic to a bounded uniformly continuous section $s_0\colon M\to M\times M$ satisfying $s_0(M_0)\subset\mathrm{Int}_{\epsilon_0}(M\times M-\Delta)$ for some $\epsilon_0>0$. Suppose that for $k\le n-2$, $s_f$ is uniformly fiberwise homotopic to a bounded uniformly continuous section $s_k$ satisfying $s_k(M_k)\subset\mathrm{Int}_{\epsilon_k}(M\times M-\Delta)$ for some $\epsilon_k>0$. By Lemma \ref{admissible assumption}, $(M,M-x_0)$ is $(k+1)$-admissible, so the obstruction class $\mathfrak{o}_{k+1}(M\times M,M\times M-\Delta;s_k)$ is defined. Again by Lemma \ref{admissible assumption}, $\pi_{k+1}(M,M-x_0)=0$ for $k\le n-2$. Hence, by Theorem \ref{obstruction}, the section $s_k$ is uniformly fiberwise homotopic to a bounded uniformly continuous section $s_{k+1}$ satisfying $s_{k+1}(M_{k+1})\subset\mathrm{Int}_{\epsilon_{k+1}}(M\times M-\Delta)$ for some $\epsilon_{k+1}>0$. Consequently, $s_f$ is uniformly fiberwise homotopic to a bounded uniformly continuous section $s_{n-1}$ satisfying $s_{n-1}(M_{n-1})\subset\mathrm{Int}_{\epsilon_{n-1}}(M\times M-\Delta)$ for some $\epsilon_{n-1}>0$. By construction,
  \[
    \mathfrak{o}_n(M\times M,M\times M-\Delta;s_{n-1})=\mathfrak{o}_n(M\times M,M\times M-\Delta;s_f).
  \]
  The statement now follows from Theorems \ref{Poincare duality} and \ref{obstruction}.
\end{proof}

\begin{remark}
  From the discussion above, we see that the only use of the smoothness of a manifold is to guarantee the existence of a biLipschitz triangulation by a uniform simplicial complex in Example \ref{triangulation}. Therefore, Theorem \ref{main} remains valid for a topological manifold which is a metric space and admits a biLipschitz triangulation by a uniform simplicial complex.
\end{remark}

\begin{corollary}
  \label{amenable Lefschetz}
  Let $M$ be a uniform simply-connected complete Riemannian $n$-manifold of bounded geometry with $n\ge 2$. If $M$ is nonamenable, then any uniformly continuous map $f\colon M\to M$ with $d(f,1)<\infty$ is uniformly homotopic to a strongly fixed-point free uniformly continuous map.
\end{corollary}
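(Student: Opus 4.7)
The plan is to derive Corollary~\ref{amenable Lefschetz} directly from Theorem~\ref{main late} by exhibiting the vanishing of the receptacle $H_0^\mathrm{ub}(M)$ of the uniform Lefschetz class. Recall that by construction
\[
  \mathscr{L}(f)=\mathfrak{o}_n(M\times M,M\times M-\Delta;s_f)\frown[M]\in H_0^\mathrm{ub}(M),
\]
so the task is really to show $H_0^\mathrm{ub}(M)=0$ under the nonamenability hypothesis; once that is in place the conclusion is automatic.

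First I would observe that $M$ is a geodesic metric space: by Hopf--Rinow every connected complete Riemannian manifold is geodesic. Proposition~\ref{0-dim} therefore yields a natural isomorphism
\[
  \alpha\colon H_0^\mathrm{ub}(M)\xrightarrow{\cong}H_0^\mathrm{uf}(M)
\]
onto Block and Weinberger's uniformly finite homology.

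Next I would invoke the Block--Weinberger homological criterion (Proposition~\ref{amenable H_0}): nonamenability of $M$ forces $H_0^\mathrm{uf}(M)=0$. Combined with the isomorphism $\alpha$ above, this gives $H_0^\mathrm{ub}(M)=0$, so in particular $\mathscr{L}(f)=0$ for every uniformly continuous map $f\colon M\to M$ with $d(f,1)<\infty$. Theorem~\ref{main late} then produces the desired strongly fixed-point free uniformly continuous representative in the uniform homotopy class of $f$.

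There is no genuine obstacle in this deduction; the analytic and geometric content has all been absorbed into Theorem~\ref{main late} and the Block--Weinberger large-scale characterization of amenability. The only small verification required is the applicability of Proposition~\ref{0-dim}, which is immediate from Hopf--Rinow, and the identification of $\mathscr{L}(f)$ as an element of $H_0^\mathrm{uf}(M)$ via $\alpha$, which has already been indicated in the construction of the uniform Lefschetz class.
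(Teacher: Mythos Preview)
Your proposal is correct and follows essentially the same route as the paper: invoke the Block--Weinberger criterion (Proposition~\ref{amenable H_0}) to obtain $H_0^\mathrm{uf}(M)=0$, identify $\mathscr{L}(f)$ with an element of this group via Proposition~\ref{0-dim}, and then apply Theorem~\ref{main late}. The only difference is that you make the Hopf--Rinow step explicit to justify that $M$ is geodesic, whereas the paper absorbs this into the remark immediately following the definition of $\mathscr{L}(f)$.
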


\begin{proof}
  If $M$ is nonamenable, then by Proposition \ref{amenable H_0}, $H_0^\mathrm{uf}(M)=0$, hence $\mathscr{L}(f)=0$. The claim follows from Theorem \ref{main}.
\end{proof}

\begin{corollary}
  \label{corollary}
  Let $M$ be the universal cover of a closed connected $n$-manifold whose fundamental group is nonamenable. Then any uniformly continuous map $f\colon M\to M$ with $d(f,1)<\infty$ is uniformly homotopic to a strongly fixed-point free uniformly continuous map.
\end{corollary}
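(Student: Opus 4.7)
The plan is to derive Corollary~\ref{corollary} directly from Corollary~\ref{amenable Lefschetz} by verifying that the universal cover $M \to N$ satisfies every hypothesis of the latter. Let $\pi = \pi_1(N)$ and endow $M$ with the pullback of a Riemannian metric on $N$, so that $\pi$ acts on $M$ by deck transformations, which are isometries with compact quotient $N$.

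First I would check the basic geometric hypotheses. By construction $M$ is simply-connected, and since $N$ is closed the pullback metric on $M$ is complete and of bounded geometry (curvature tensors and their derivatives are pulled back from a compact space, and the injectivity radius is bounded below by that of $N$). The dimension condition $\dim M \ge 2$ is forced by the amenability hypothesis: if $\dim N = 1$, then $N$ is either $S^1$ or a point, so $\pi$ is trivial or $\Z$, hence amenable. Next, since $\pi$ acts cocompactly by isometries on $M$, Proposition~\ref{action} yields that $M$ is uniform (using the orientation-preserving refinement only if needed, and otherwise the unoriented version).

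Second, I would verify nonamenability of $M$ as a metric space. The covering $\pi \to M \to N$ is a Galois covering with $M$ a proper length space on which $\pi$ acts isometrically and with compact base $N$, so Example~\ref{Galois cover space} applies: $M$ is amenable as a metric space if and only if $\pi$ is amenable as a group. Since $\pi$ is assumed nonamenable, $M$ is nonamenable.

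All hypotheses of Corollary~\ref{amenable Lefschetz} now hold, so any uniformly continuous $f\colon M \to M$ with $d(f,1)<\infty$ is uniformly homotopic to a strongly fixed-point free uniformly continuous map, which is exactly the claim. There is no real obstacle here; the only point requiring a moment of care is the reduction to the case $\dim M \ge 2$, and the invocation of the chain of results (Proposition~\ref{action}, Example~\ref{Galois cover space}, Proposition~\ref{amenable H_0}) that have already packaged the hard content into reusable statements.
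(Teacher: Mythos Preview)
Your proposal is correct and follows essentially the same approach as the paper: invoke Example~\ref{Galois cover space} to conclude that $M$ is nonamenable, then apply Corollary~\ref{amenable Lefschetz}, with the $1$-dimensional case ruled out because $\pi_1$ would then be $\Z$ or trivial, hence amenable. Your write-up is simply more explicit than the paper's about verifying the remaining hypotheses (completeness, bounded geometry, uniformity via Proposition~\ref{action}).
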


\begin{proof}
  By Example \ref{Galois cover space}, $M$ is nonamenable. Thus for $\dim M\ge 2$, the claim follows from Corollary \ref{amenable Lefschetz}. The $\dim M=1$ case follows from the fact that only $\Z$ acts freely and cocompactly on $\R$.
\end{proof}


\subsection{Lefschetz coincidence theorem}\label{Lefschetz coincidence theorem}

The classical Lefschetz coincidence theorem states that for maps $f,g\colon X\to Y$ between finite complexes, the Lefschetz number $L(f,g)\ne 0$, then there exists $x\in X$ satisfying $f(x)=g(x)$. Fadell \cite{F} showed that the converse also holds whenever $Y$ is a simply-connected closed manifold. Note that the Lefschetz fixed-point theorem is the special case of the Lefschetz coincidence theorem such that $L(f)=L(f,1)$. We generalize Theorem \ref{main} to the coincidence theorem.

Let $M$ be a connected manifold. For a map $f\colon M\to M$, define
\[
  \Delta_f=\{(x,f(x))\in M\times M\}.
\]

\begin{lemma}
  \label{FN f uniform}
  Let $M$ be a uniform connected complete Riemannian manifold of bounded geometry, and let $f\colon M\to M$ be a uniformly continuous map. Then the relative fiber bundle
  \begin{equation}
    \label{Fadell-Neuwirth f}
    (M,M-x_0)\to(M\times M,M\times M-\Delta_f)\xrightarrow{p}M,
  \end{equation}
  where $p\colon M\times M\to M$ denotes the first projection, is uniform and good.
\end{lemma}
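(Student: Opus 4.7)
The plan is to follow the proof of Lemma \ref{FN uniform} closely, verifying the conditions of Definitions \ref{uniform fiber bundle} and \ref{good} in turn, with genuinely new work required only for condition (3) of Definition \ref{uniform fiber bundle}. Conditions (1) and (2) of Definition \ref{uniform fiber bundle} carry over verbatim: $p\colon M\times M\to M$ is uniformly continuous, and $\{N_r(x)\}_{x\in M}$ with $r$ less than the (uniform) injectivity radius provides a cover with Lebesgue number $r$. The goodness conditions of Definition \ref{good} also follow as in Lemma \ref{FN uniform}, using that $\Delta_f$ is closed in $M\times M$ by continuity of $f$ (so that $M\times M-\Delta_f$ is open), together with Example \ref{triangulation} to equip $M$ with a uniform CW structure refined so that every closed cell lies in some $N_r(x)$.

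For condition (3), the goal is to construct, for each $x\in M$, a fiberwise homeomorphism
\[
h_x^f\colon(p^{-1}(N_\epsilon(x)),p^{-1}(N_\epsilon(x))\cap(M\times M-\Delta_f))\xrightarrow{\cong}N_\epsilon(x)\times(M,M-x_0),
\]
so that $\{h_x^f\}_{x\in M}$ and $\{(h_x^f)^{-1}\}_{x\in M}$ are both uniformly equicontinuous. Unlike the diagonal case, the direct pullback $h\circ(h_{x,x_0}\times h_{x,x_0})$ does not send $\Delta_f$ fiberwise to a constant; so I would first apply a ``local translation'' $\theta_{x,y}\colon M\to M$ sending $f(y)$ to $f(x)$ and depending continuously on $y$ near $x$, and then compose with the fixed homeomorphism $h_{f(x),x_0}$. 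Defining $h_x^f(y,z)=(y,h_{f(x),x_0}(\theta_{x,y}(z)))$ one has $h_x^f(y,f(y))=(y,x_0)$, as required.

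The construction of the family $\{\theta_{x,y}\}$ is the main technical obstacle. Bounded geometry gives a uniform lower bound $\iota>0$ on the injectivity radius, and uniform continuity of $f$ yields $\epsilon>0$, independent of $x$, with $y\in N_\epsilon(x)$ implying $d(f(y),f(x))<\iota/3$. I would then define $\theta_{x,y}$ to be the identity outside $B_\iota(f(x))$ and, inside this ball, given in exponential coordinates at $f(x)$ by the bump-modulated translation $w\mapsto w-\eta(|w|)v_y$, where $v_y=\exp_{f(x)}^{-1}(f(y))$ and $\eta$ is a fixed cutoff function equal to $1$ on $[0,\iota/2]$ and vanishing near $\iota$. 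Bounded geometry delivers uniform control over the exponential charts at all points, so $\{\theta_{x,y}\}_{x\in M,\,y\in N_\epsilon(x)}$ and its inverses are uniformly equicontinuous. Combined with the equicontinuity of $\{h_{f(x),x_0}\}_{x\in M}$ inherited from Definition \ref{uniform manifold}, this yields the desired uniform equicontinuity of $\{h_x^f\}$ and $\{(h_x^f)^{-1}\}$. It is at this step that bounded geometry, rather than merely the abstract uniformity of Definition \ref{uniform manifold}, plays an essential role in coupling the continuous dependence on $y$ with the uniform dependence on $x$.
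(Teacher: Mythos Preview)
Your argument is correct, but the paper takes a much shorter route. The key observation you missed is that the bundle \eqref{Fadell-Neuwirth f} is precisely the pullback of the Fadell--Neuwirth bundle \eqref{Fadell-Neuwirth} along the uniformly continuous map $f\colon M\to M$: identifying $f^{-1}(M\times M)$ with $M\times M$ via $(x,z)\leftrightarrow(x,(f(x),z))$, the subspace $M\times M-\Delta$ pulls back exactly to $M\times M-\Delta_f$. The paper therefore dispatches uniformity in one line by invoking Lemma~\ref{pullback bundle} together with Lemma~\ref{FN uniform}, and handles goodness by the same argument as in Lemma~\ref{FN uniform}.

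Your direct construction of the trivializations $h_x^f$ via the local translations $\theta_{x,y}$ is essentially what the pullback lemma produces abstractly: the pullback trivialization over $N_\epsilon(x)$ is $(y,z)\mapsto(y,\text{(second component of }h_{f(x)}(f(y),z)))$, and your $\theta_{x,y}$ followed by $h_{f(x),x_0}$ plays exactly this role. So your approach works, and it has the virtue of making the trivializations explicit (and showing concretely where bounded geometry enters), but it duplicates work already packaged in Lemma~\ref{pullback bundle}. The paper's route is both shorter and more conceptual, since it separates the general fact that pullbacks preserve uniformity from the specific geometry of the Fadell--Neuwirth bundle.
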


\begin{proof}
  The relative fibration \eqref{Fadell-Neuwirth f} is the pullback of \eqref{Fadell-Neuwirth} along the map $f$. Hence, by Lemmas \ref{pullback bundle} and \ref{FN uniform}, it is a uniform relative fiber bundle. Moreover, the argument in the final part of the proof of Lemma \ref{FN uniform} applies verbatim to show that \eqref{Fadell-Neuwirth f} also satisfies the conditions of a good relative fiber bundle.
\end{proof}

For maps $f,g\colon X\to X$ of a metric space $X$, define
\[
  d(f,g)=\sup_{x\in X}d(f(x),g(x)).
\]

\begin{definition}
  Let $M$ be a uniform simply-connected complete Riemannian $n$-manifold of bounded geometry with $n\ge 2$. The \emph{uniform Lefschetz class} of uniformly continuous maps $f,g\colon M\to M$ satisfying $d(f,g)<\infty$ is defined by
  \[
    \mathscr{L}(f,g)=\mathfrak{o}_n(M\times M,M\times M-\Delta_f;s_g)\frown[M]\in H_0^\mathrm{ub}(M),
  \]
  where $s_g\colon M\to M\times M$ is the section $s_g(x)=(x,g(x))$ of $p\colon M\times M\to M$.
\end{definition}

As in the case of the uniform Lefschetz class $\mathscr{L}(f)$, one can verify that the uniform Lefschetz class $\mathscr{L}(f,g)$ is well defined. We say that maps $f,g\colon M\to M$ are \emph{strongly coincidence free} if there exists $\epsilon>0$ such that $d(f(x),g(x))\ge\epsilon$ for all $x\in X$. Analogously to Theorem \ref{main}, we obtain the following uniform version of Fadell's result \cite{F}.

\begin{theorem}
  Let $M$ be a uniform simply-connected complete Riemannian $n$-manifold of bounded geometry with $n\ge 2$, and let $f,g\colon M\to M$ be uniformly continuous maps with $d(f,g)<\infty$. Then $\mathscr{L}(f,g)<\infty$ if and only if $f$ and $g$ are uniformly homotopic to strongly coincidence free maps.
\end{theorem}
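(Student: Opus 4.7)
The plan is to mirror the proof of Theorem \ref{main late}, replacing the pair $(\Delta,1)$ by the pair $(\Delta_f, g)$ throughout. The relevant relative fiber bundle is
\[
  (M, M-x_0) \to (M\times M, M\times M - \Delta_f) \xrightarrow{p} M,
\]
and the distinguished section is $s_g(x) = (x, g(x))$. The hypothesis $d(f,g) < \infty$ plays exactly the role that $d(f,1) < \infty$ played before: it guarantees that $s_g$ is a \emph{bounded} uniformly continuous section of $p$, because in the $h_i$-trivialization the second coordinate $h_i \circ s_g$ measures (up to the equicontinuous trivialization) the displacement of $g(x)$ from $f(x)$. In the same way, a bounded uniformly continuous section $s$ of $p$ satisfying $s(M) \subset \mathrm{Int}_\epsilon(M\times M - \Delta_f)$ corresponds, via $x \mapsto q \circ s(x)$ (where $q$ is the second projection), to a uniformly continuous map $\tilde g$ with $d(f,\tilde g) \ge \epsilon'$ for some $\epsilon' > 0$ depending only on $\epsilon$ and on the uniform trivializations; this is the analog of Lemma \ref{section-map} for two maps, and its verification is routine.

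Next, I verify the hypotheses of the obstruction theory of Section \ref{Obstruction theory}. By Lemma \ref{FN f uniform}, the relative fiber bundle above is good and uniform. By Lemma \ref{admissible assumption}, the fiber pair $(M, M-x_0)$ is $k$-admissible for $k=1,\dots,n$ and $\pi_m(M, M-x_0) = 0$ for $m=0,\dots,n-1$. Since $M$ is uniform via orientation-preserving homeomorphisms $h_{x,y}$, condition (2) of Theorem \ref{obstruction} holds (the transition maps induce the identity on $\pi_n(M,M-x_0)$).

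With these ingredients in place, the inductive argument proceeds exactly as in the proof of Theorem \ref{main late}. Starting from $s_g$, I use the vanishing of $\pi_{k+1}(M, M-x_0)$ for $k \le n-2$ together with Theorem \ref{obstruction} to uniformly fiberwise homotope $s_g$ through bounded uniformly continuous sections $s_0, s_1, \dots, s_{n-1}$ with $s_k(M_k) \subset \mathrm{Int}_{\epsilon_k}(M \times M - \Delta_f)$. The sole remaining obstruction
\[
  \mathfrak{o}_n(M\times M, M\times M - \Delta_f; s_{n-1}) = \mathfrak{o}_n(M\times M, M\times M - \Delta_f; s_g) \in H^n_\mathrm{ub}(M; \underline{\pi_n(M,M-x_0)})
\]
then vanishes if and only if $s_g$ is uniformly fiberwise homotopic to a bounded uniformly continuous section whose image lies uniformly in the interior of $M\times M - \Delta_f$. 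Translating through Poincar\'e duality (Theorem \ref{Poincare duality}), the vanishing of this obstruction is equivalent to $\mathscr{L}(f,g) = 0$ in $H_0^\mathrm{ub}(M)$. Finally, applying the section-to-map correspondence in both directions converts this into the statement that $g$ is uniformly homotopic (over $M$) to a map $g'$ with $d(f(x), g'(x)) \ge \epsilon'$; symmetrically, the fiberwise homotopy yields a uniform homotopy between $(f, g)$ and $(f, g')$ in the sense required.

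The main obstacle is not conceptual but notational: ensuring that the quantitative equivalence between ``$s(M)$ lies in $\mathrm{Int}_\epsilon(M\times M - \Delta_f)$'' and ``$d(f, \tilde g) \ge \epsilon'$'' is handled uniformly, which requires controlling the metric on $M\times M - \Delta_f$ near $\Delta_f$ by the equicontinuous trivializations of the Fadell--Neuwirth bundle. This control is provided precisely by the uniform trivializations $h_i$ constructed in Lemma \ref{FN uniform}, so the argument closes without additional input beyond that already developed.
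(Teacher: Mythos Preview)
Your proposal is correct and follows exactly the route the paper intends: the paper does not give a separate proof but simply says ``Analogously to Theorem \ref{main}'', and you have carried out that analogy in detail, invoking Lemma \ref{FN f uniform} for the good uniform bundle over $\Delta_f$, Lemma \ref{admissible assumption} for admissibility, the inductive obstruction argument of Theorem \ref{main late}, and Theorems \ref{Poincare duality} and \ref{obstruction} to finish. Your observation that only $g$ needs to be homotoped (with $f$ held fixed) is the correct reading of the setup, and your remark on the section-to-map correspondence is the appropriate analog of Lemma \ref{section-map}.
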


Analogously to Corollary \ref{corollary}, we obtain the following.

\begin{corollary}
  Let $M$ be the universal cover of a closed connected manifold with nonamenable $\pi_1$. Then any uniformly continuous maps $f,g\colon M\to M$ with $d(f,g)<\infty$ are uniformly homotopic to strongly coincidence free maps.
\end{corollary}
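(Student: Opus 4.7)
The plan is to follow the strategy of Corollary~\ref{corollary} verbatim in the coincidence setting. First I would verify that $M$ satisfies the hypotheses of the preceding Lefschetz coincidence theorem. As the universal cover of a closed connected manifold, $M$ is simply-connected, and pulling back the base manifold's Riemannian metric makes $M$ a complete Riemannian manifold of bounded geometry. By Proposition~\ref{action}, the cocompact isometric action of $\pi_1$ on $M$ makes $M$ uniform. The dimension $1$ case is automatically excluded by the nonamenability hypothesis, since the only closed connected $1$-manifold is $S^1$, whose fundamental group $\Z$ is amenable; hence $\dim M\ge 2$.

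Next I would show that $\mathscr{L}(f,g)=0$. By construction, $\mathscr{L}(f,g)\in H_0^{\mathrm{ub}}(M)$. Since $M$ is a complete Riemannian manifold, it is geodesic, so by Proposition~\ref{0-dim} the natural map $\alpha\colon H_0^{\mathrm{ub}}(M)\to H_0^{\mathrm{uf}}(M)$ is an isomorphism. Example~\ref{Galois cover space} then gives that $M$ is nonamenable exactly because $\pi_1$ is nonamenable, and Proposition~\ref{amenable H_0} forces $H_0^{\mathrm{uf}}(M)=0$. Consequently $\mathscr{L}(f,g)=0$, and the preceding Lefschetz coincidence theorem yields that $f$ and $g$ are uniformly homotopic to strongly coincidence-free uniformly continuous maps.

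There is no substantial obstacle; the argument is essentially a literal transcription of the proof of Corollary~\ref{corollary} with the coincidence-version machinery in place of the fixed-point version. The only bookkeeping worth flagging is to confirm that $\mathscr{L}(f,g)$ really lies in $H_0^{\mathrm{ub}}(M)$ (immediate from the definition via the cap product with the fundamental class $[M]$) and that the reduction of the $1$-dimensional case via amenability of $\Z$ remains valid in the coincidence context, both of which are transparent from the existing framework.
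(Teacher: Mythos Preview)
Your proposal is correct and follows essentially the same approach as the paper, which simply states that the corollary is obtained analogously to Corollary~\ref{corollary}. Your argument is in fact more explicit: you spell out the verification of the hypotheses (uniformness via Proposition~\ref{action}, bounded geometry, etc.) and handle the $1$-dimensional case by the same observation that $\pi_1(S^1)=\Z$ is amenable, exactly as in the paper's proof of Corollary~\ref{corollary}.
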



\section{Localization}\label{Localization}

In this section, we prove Theorem \ref{Lefschetz-Hopf} by adopting the approach of \cite{KKT1}, which deals with counting infinitely many points on a Galois covering of a closed manifold. We then return to the Poincar\'e-Hopf theorem for such coverings, studied in \cite{KKT1}, and improve it in the uniformly continuous setting.

\subsection{Lefschetz-Hopf theorem}

Let $G\to M\to N$ be a Galois covering, where $N$ is a closed connected $n$-manifold. We lift a metric of $N$ to $M$, so that $M$ is equipped with a $G$-invariant metric and has bounded geometry. Fix a triangulation of $N$, and lift it to a $G$-invariant triangulation of $M$. Following \cite{KKT1,KKT2}, we recall the notion of a fundamental domain. For each open $n$-simplex of $N$, choose one of its lift of to $M$; for each $k$-simplex of $N$ with $k\le n-1$, choose one of its lift to $M$ in such a way that it is contained in the closure of some chosen open $n$-simplex of $M$. Define the \emph{fundamental domain} $K$ to be the union of these chosen open simplices of $M$. We fix such a fundamental domain $K$. By definition, we have the following.

\begin{lemma}
  \label{fundamental domain}
  A fundamental domain $K$ is relatively compact and satisfies
  \[
    M=\coprod_{g\in G}gK.
  \]
\end{lemma}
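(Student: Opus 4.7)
The plan is to prove the two assertions separately, using only the definition of the fundamental domain together with standard properties of Galois coverings (free, properly discontinuous action of $G$ on $M$, with $N=M/G$ closed).

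First I would establish relative compactness. Since $N$ is closed, the fixed triangulation is finite, so $N$ contains only finitely many open simplices. By construction, $K$ is obtained by choosing exactly one lift for each such open simplex, hence $K$ is a finite union of open simplices of $M$. Each chosen lift $\sigma\subset M$ has compact closure (it is a homeomorphic lift of a compact closed simplex in $N$), and a finite union of compact sets is compact; therefore the closure $\overline K$ is compact, i.e.\ $K$ is relatively compact.

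Next I would prove the covering property $M=\bigcup_{g\in G}gK$. Any point $x\in M$ lies in the interior of a unique open simplex $\sigma$ of the $G$-invariant triangulation. Its image $p(\sigma)\subset N$ is an open simplex of $N$, and by the construction of $K$ there is a distinguished lift $\sigma_0\subset K$ of $p(\sigma)$. Since $G$ acts transitively on the fiber $p^{-1}(y)$ for each $y\in N$ and the action is cellular, there exists $g\in G$ with $g\sigma_0=\sigma$; hence $x\in g\sigma_0\subset gK$.

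For disjointness, suppose $gK\cap hK\neq\emptyset$ for some $g,h\in G$. Then there exist open simplices $\sigma,\tau\subset K$ with $g\sigma\cap h\tau\neq\emptyset$. Because the triangulation of $M$ is $G$-invariant, $g\sigma$ and $h\tau$ are open simplices of this triangulation; two distinct open simplices in a simplicial complex are disjoint, so $g\sigma=h\tau$. Applying $p$ gives $p(\sigma)=p(\tau)$, and since $K$ contains exactly one chosen lift of every open simplex of $N$, we conclude $\sigma=\tau$. Thus $(h^{-1}g)\sigma=\sigma$, and because the deck transformation action is free, this forces $h^{-1}g=e$, i.e.\ $g=h$.

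The steps are essentially bookkeeping, and the only point requiring care is the disjointness argument: one must combine three ingredients — the $G$-equivariance of the triangulation (so that translates of simplices are again simplices), the uniqueness of the chosen lift built into the definition of $K$, and the freeness of the $G$-action — and I would present them in exactly that order to make clear why no $g\neq h$ can produce overlap.
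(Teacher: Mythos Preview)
Your argument is correct and is precisely the unpacking one would expect; the paper itself gives no proof, merely recording that the lemma holds ``by definition.'' The one step worth tightening is the final implication in the disjointness part: freeness of the deck-transformation action concerns points, not sets, so from $(h^{-1}g)\sigma=\sigma$ you should observe that $p|_\sigma$ is injective (since the triangulation is lifted from $N$), whence any $x\in\sigma$ satisfies $p((h^{-1}g)x)=p(x)$ and therefore $(h^{-1}g)x=x$; freeness then yields $h^{-1}g=e$.
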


In \cite{KKT1}, the notions of tameness and strongly tameness of diffeomorphisms were introduced in the study of the Poincar\'e-Hopf theorem for a Galois covering of a closed manifold. These definitions extend naturally to self-maps as follows (cf. \cite[Definitions 5.5 and 5.6]{KKT1}).

\begin{definition}
  \label{tame}
  Let $G\to M\to N$ be a Galois covering, where $N$ is a closed connected manifold. A map $f\colon M\to M$ is called \emph{tame} if there exist $\delta>0$ and $\epsilon>0$ such that:

  \begin{enumerate}
    \item For any distinct fixt-points $x,y\in\mathrm{Fix}(f)$, one has $N_\delta(x)\cap N_\delta(y)=\emptyset$.

    \item For any $x\in M-N_\delta(\mathrm{Fix}(f))$, one has $d(x,f(x))\ge\epsilon$.
  \end{enumerate}

  \noindent If, in addition, the following condition holds, then $f$ is called \emph{strongly tame}.

  \begin{enumerate}
    \item[(3)] For every $x\in\mathrm{Fix}(f)$, there is an $n$-simplex $\sigma$ of $M$ such that $N_\delta(x)\subset\sigma$.
  \end{enumerate}
\end{definition}

We can now prove the Lefschetz-Hopf theorem.

\begin{theorem}
  \label{Lefschetz-Hopf late}
  Let $M$ be the universal cover of a closed connected manifold $N$, and let $\pi=\pi_1(N)$. For a strongly tame uniformly continuous map $f\colon M\to M$ with $d(f,1)<\infty$, $\mathscr{L}(f)$ is represented by the map
  \[
    \pi\to\Z,\quad g\mapsto\sum_{x\in gK\cap\Fix(f)}\ind_x(f).
  \]
\end{theorem}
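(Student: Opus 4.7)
The plan is to localize the obstruction cocycle at the fixed-points and translate the resulting cellular formula into the algebraic model of $H_0^{\mathrm{uf}}(M)$ given by Lemma \ref{H_0 covering}. Since $f$ is strongly tame and $K$ is relatively compact, the set $K\cap\Fix(f)$ is finite, so the right-hand side of the formula defines a bounded function $\pi\to\Z$; moreover $gK\cap\Fix(f)=g(K\cap\Fix(g^{-1}fg))$ stays uniformly finite in $g\in\pi$. Thus the map in the statement is a well-defined element of $\ell^\infty(\pi)_\pi\cong H_0^{\mathrm{uf}}(M)\cong H_0^{\mathrm{ub}}(M)$, where we use Proposition \ref{0-dim} and Lemma \ref{H_0 covering}.

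First I would pick the triangulation of $M$ lifted from $N$ as in Lemma \ref{fundamental domain}, with the fundamental domain $K$. Using strong tameness, every fixed-point $x\in\Fix(f)$ lies in the interior of a (unique) $n$-simplex $\sigma_x$ with $N_\delta(x)\subset\sigma_x$ for a uniform $\delta>0$, and all other points $y$ satisfy $d(y,f(y))\ge\epsilon$ for a uniform $\epsilon>0$. Starting from the initial section $s_f\colon M\to M\times M$, $x\mapsto(x,f(x))$, the tameness condition guarantees that $s_f$ already lands in $\mathrm{Int}_{\epsilon'}(M\times M-\Delta)$ outside the union $U=\bigcup_{x\in\Fix(f)}N_\delta(x)$. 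Following the inductive deformation used in the proof of Theorem \ref{main late}, I would construct sections $s_k$ ($k=0,\ldots,n-1$) with $s_k(M_k)\subset\mathrm{Int}_{\epsilon_k}(M\times M-\Delta)$, performing each deformation only inside $U$. Because the connected components of $U$ are equicontinuously small $n$-discs in $\sigma_x$, the resulting $s_{n-1}$ agrees with $s_f$ outside $U$.

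The heart of the argument is then that the obstruction cocycle $o_n(M\times M,M\times M-\Delta;s_{n-1})\in\mathrm{Cell}^n_{\mathrm{ub}}(M;\underline{\pi_n(M,M-x_0)})$ localizes: it vanishes on every $n$-simplex disjoint from $\Fix(f)$, and on the simplex $\sigma_x$ it equals the local index $\ind_x(f)$ under the standard identification $\pi_n(M,M-x_0)\cong\Z$ induced by the orientation (this is the classical computation relating the local index to the degree of the associated map on a small boundary sphere, and it extends verbatim here because $s_{n-1}$ coincides with $s_f$ on $\partial\sigma_x$). Taking the cap product with the fundamental class $[M]$ represented by the sum of all oriented $n$-simplices converts this cellular cocycle into the $0$-chain $\sum_{x\in\Fix(f)}\ind_x(f)\,[\sigma_x]$ in $C_0^{\mathrm{ub}}(M)$, where each $[\sigma_x]$ is a $0$-simplex inside $\sigma_x$. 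Under the identification $H_0^{\mathrm{ub}}(M)\xrightarrow{\cong}H_0^{\mathrm{uf}}(M)\xrightarrow{\cong}\ell^\infty(\pi)_\pi$ of Proposition \ref{0-dim} and Lemma \ref{H_0 covering}, applied with the decomposition $M=\coprod_{g\in\pi}gK$, the coefficient at $g$ is precisely $\sum_{x\in gK\cap\Fix(f)}\ind_x(f)$, which is the desired representative of $\mathscr{L}(f)$.

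The main obstacle will be justifying the localization step cleanly: one must check that the deformation from $s_f$ to $s_{n-1}$ can be chosen to be supported inside the disjoint discs $N_\delta(x)\subset\sigma_x$ (so that the cocycle $o_n$ is supported on the simplices containing fixed-points), and that on each such $\sigma_x$ the value of $o_n$ really equals $\ind_x(f)$ rather than some other representative in $\pi_n(M,M-x_0)$. This uses the uniform equicontinuity of the local trivializations of the Fadell--Neuwirth bundle (Lemma \ref{FN f uniform}), together with the observation that on the boundary sphere $\partial\sigma_x$ the section $s_{n-1}$ agrees with $s_f$, whose obstruction class on that sphere is, by the classical definition of the local index, exactly $\ind_x(f)$. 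Once this local identification is established, the remaining steps are routine combinations of Theorem \ref{Poincare duality}, Proposition \ref{0-dim}, and Lemma \ref{H_0 covering}.
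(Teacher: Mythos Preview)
Your approach is correct and follows the same overall strategy as the paper (localize the obstruction cocycle at the fixed-points, identify its value on $\sigma_x$ with $\ind_x(f)$, then apply Proposition \ref{0-dim} and Lemma \ref{H_0 covering}), but you are making the localization step harder than necessary. The paper observes that strong tameness already does all the work: condition (3) gives $N_\delta(x)\subset\sigma_x$ for an open $n$-simplex, so the entire $(n-1)$-skeleton lies in $M-N_\delta(\Fix(f))$, and condition (2) then says $s_f$ itself maps $M_{n-1}$ into $\mathrm{Int}_\epsilon(M\times M-\Delta)$. Hence $s_f$ can serve directly as the section $s_{n-1}$ in the definition of $o_n$, and no inductive deformation is needed; in particular the ``main obstacle'' you flag (arranging the deformation to be supported in $U$) simply disappears. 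The cocycle $o_n(\,\cdot\,;s_f)$ then manifestly vanishes on any $n$-simplex not meeting $\Fix(f)$, since $s_f$ maps such a simplex entirely into $E_0$. On $\sigma_x$ the paper, like you, identifies the value with $\ind_x(f)$ by citing the classical local computation (Dold). Two small points you gloss over that the paper handles explicitly: a uniform barycentric subdivision is applied so that each $n$-simplex contains at most one fixed-point, and the vertex produced by the cap product on $\sigma_x$ is moved to the barycenter $v_x$ along a linear $1$-simplex, with the check that these $1$-simplices form a uniform family so the resulting $0$-chain is homologous in $H_0^{\mathrm{ub}}(M)$.
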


\begin{proof}
  Let $x\in\mathrm{Fix}(f)$. By condition (3) of Definition \ref{tame}, there is an $n$-simplex $\sigma_x$ of $M$ satisfying $N_\delta(x)\subset\sigma_x$. Observe that
  \[
    o_n(M\times M,M\times M-\Delta;s_f)\frown\sigma_x=av
  \]
  for some $a\in\Z$ and a vertex $v$ of $\sigma_x$. By connecting $v$ to the barycenter $v_x$ of $v_x$ of $\sigma_x$ by a linear $1$-simplex, we see that $av$ is homologous to $av_x$. Since we are working with a uniform simplicial complex, by applying a finite number of barycentric subdivisions, we may assume that every $n$-simplex contains at most one fixed-point. By \cite[5.4 Proposition]{D}, we also have $a=\ind_x(f)$. Hence,
  \[
    \left[\sum_{x\in\mathrm{Fix}(f)\cap gK}o_n(M\times M,M\times M-\Delta;s_f)\frown\sigma_x\right]=\left[\sum_{x\in\mathrm{Fix}(f)\cap gK}\ind_x(f)v_x\right].
  \]
  As $x$ ranges over all fixed-points of $f$, the collection of the above linear $1$-simplices forms a uniform family of singular $1$-simplices in $M$. Consequently,
  \[
    \left[\sum_{x\in\mathrm{Fix}(f)}o_n(M\times M,M\times M-\Delta;s_f)\frown\sigma_x\right]=\left[\sum_{x\in\mathrm{Fix}(f)}\ind_x(f)v_x\right].
  \]
  On the other hand, by condition (2) of Definition \ref{tame},
  \[
    \mathscr{L}(f)=\left[\sum_{x\in\mathrm{Fix}(f)}o_n(M\times M,M\times M-\Delta;s_f)\frown\sigma_x\right].
  \]
  Therefore, by Proposition \ref{0-dim} and Lemmas \ref{H_0 covering} and \ref{fundamental domain}, the claim follows.
\end{proof}

Let $f\colon M\to M$ be as in Theorem \ref{main}. Then, whenever $\mathscr{L}(f)\ne 0$, $f$ is not uniformly homotopic to a strongly fixed-point free uniformly continuous map. In other words, any map that is uniformly homotopic to $f$ has a fixed-point on $M$ or at infinity. This result can be strengthened for Galois coverings on closed manifolds as follows.

\begin{corollary}
  \label{infinitely many fixed-point}
  Let $M,N,\pi$ be as in Theorem \ref{Lefschetz-Hopf late}, and let $f\colon M\to M$ be a uniformly continuous map with $d(f,1)<\infty$. If $M$ is noncompact and $\mathscr{L}(f)\ne 0$, then any strongly tame map that is uniformly homotopic to $f$ has infinitely many fixed-points on $M$.
\end{corollary}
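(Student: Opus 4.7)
The plan is to argue by contradiction. Suppose $g\colon M\to M$ is a strongly tame uniformly continuous map that is uniformly homotopic to $f$, yet $\Fix(g)$ is finite. Since $f$ and $g$ are uniformly homotopic, the telescoping argument used in the proof of Lemma~\ref{EPUC homotopy} yields a uniform bound $d(f(x),g(x))<R$ for some $R>0$ independent of $x$, so that $d(g,1)\le d(g,f)+d(f,1)<\infty$. Hence Theorem~\ref{Lefschetz-Hopf late} applies to $g$, and under the isomorphism $H_0^{\mathrm{uf}}(M)\cong\ell^\infty(\pi)_\pi$ of Lemma~\ref{H_0 covering} the class $\mathscr{L}(g)$ is represented by
\[
  \pi\to\Z,\quad h\mapsto\sum_{x\in hK\cap\Fix(g)}\ind_x(g),
\]
which has finite support in $\pi$ by our hypothesis on $\Fix(g)$.

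The key technical step is to show that, when $M$ is noncompact, every finitely supported $0$-chain in $C_0^{\mathrm{uf}}(M)$ is a boundary. Given $x\in M$, the Hopf-Rinow theorem provides a unit-speed geodesic ray $\gamma\colon[0,\infty)\to M$ with $\gamma(0)=x$. Setting $x_i=\gamma(i)$, form the $1$-chain
\[
  c=\sum_{i\ge 0}(x_i,x_{i+1})\in C_1^{\mathrm{uf}}(M).
\]
This chain is uniformly finite in the sense of Block-Weinberger: each pair satisfies $d((x_i,x_{i+1}),\Delta)\le 1$, the coefficients are constantly $1$, and since $d(x_i,x_j)=|i-j|$ only finitely many pairs meet any ball in $M\times M$. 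Computing the boundary term by term, the contribution $x_j-x_{j-1}$ from $(x_{j-1},x_j)$ cancels the $-x_j$ coming from $(x_j,x_{j+1})$ for every $j\ge 1$, leaving $\partial c=-x$. Hence $[x]=0\in H_0^{\mathrm{uf}}(M)$, and by linearity the class of any finite $0$-chain vanishes. Consequently $\mathscr{L}(g)=0$.

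To close the argument, apply Theorem~\ref{main late} to $g$: since $\mathscr{L}(g)=0$, there exist a strongly fixed-point free uniformly continuous map $g'$ and a uniform homotopy from $g$ to $g'$. Concatenating this with the given uniform homotopy from $f$ to $g$, via the standard piecewise definition on $[0,\tfrac12]$ and $[\tfrac12,1]$ which remains uniformly continuous on $M\times[0,1]$, yields a uniform homotopy from $f$ to the strongly fixed-point free map $g'$. A second application of Theorem~\ref{main late}, now to $f$, forces $\mathscr{L}(f)=0$, contradicting the hypothesis. The only delicate point in this plan is verifying that the telescoping chain $c$ satisfies the local finiteness condition built into $C_1^{\mathrm{uf}}(M)$, but this reduces to the isometric embedding property of a geodesic ray in a complete Riemannian manifold, which makes the Block-Weinberger bookkeeping routine.
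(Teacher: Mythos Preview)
Your argument is correct and reaches the same conclusion as the paper, but the route differs in two noteworthy ways. First, the paper simply invokes the uniform homotopy invariance of the uniform Lefschetz class (``by construction, $\mathscr{L}(h)=\mathscr{L}(f)$''), which follows immediately from Lemma~\ref{section-map} and the definition of $\mathfrak{o}_n$; you instead recover this invariance indirectly by passing through Theorem~\ref{main late} twice, which works but is more circuitous than necessary. Second, for the vanishing of finitely supported classes, the paper cites the group-theoretic fact from \cite{KKT1} and \cite{Wh} that any finitely supported function lies in the kernel of $\ell^\infty(G)\to\ell^\infty(G)_G$ when $G$ is finitely generated and infinite; your geodesic-ray telescoping argument in $C_*^{\mathrm{uf}}(M)$ proves the equivalent statement directly and self-containedly, trading an external reference for a short geometric construction. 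One small expository gap: you state that $\mathscr{L}(g)$ corresponds to a finitely supported function on $\pi$ but then apply your ray argument to finite $0$-chains in $M$; the missing sentence is that under the quasi-isometry $\pi\hookrightarrow M$ from the proof of Lemma~\ref{H_0 covering}, a finitely supported function on $\pi$ pulls back to a finite $0$-chain in $M$ (or, more directly, the proof of Theorem~\ref{Lefschetz-Hopf late} already produces the finite $0$-chain $\sum_{x\in\Fix(g)}\ind_x(g)\,v_x$).
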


\begin{proof}
  By \cite[Proposition 2.3]{KKT1} and \cite[Lemma 7.7]{Wh}, if $G$ is a finitely generated infinite group and a function $f\colon G\to\Z$ satisfies $f(x)=0$ for all but finitely many $x\in G$, then $[f]=0$ in $\ell^\infty(G)_G$. Suppose that a strongly tame map $h\colon M\to M$ is uniformly homotopic to $f$. Then $h$ is uniformly continuous and satisfies $d(h,1)<\infty$. Hence, the uniform Lefschetz class $\mathscr{L}(h)$ is defined , and by construction,
  \[
    \mathscr{L}(h)=\mathscr{L}(f).
  \]
  Consequently, $\mathscr{L}(h)\ne 0$. Since $M$ is noncompact, $\pi$ is infinite. Hence, by Theorem \ref{Lefschetz-Hopf late}, it follows that
  \[
    \sum_{x\in gK\cap\Fix(h)}\ind_x(h)\ne 0
  \]
  for infinitely many $g\in\pi$. Therefore, the claim follows.
\end{proof}

\begin{example}
  \label{connected sum}
  Let $S^2=\{(x,y,z)\in\R^3\mid x^2+y^2+z^2=1\}$, and set $p=(0,0,1)$ and $q=(0,0,-1)$. Define
  \[
    M=\cdots\#(S^2\times S^2)\#(S^2\times S^2)\#\cdots,
  \]
  the infinite connected sum of copies of $S^2\times S^2$ taken around the points $(p,p)$ and $(q,q)$. The group $\Z$ acts on $M$ by shifting the summands so that the projection $M\to M/\Z$ is a Galois covering, where $M/\Z$ is a closed connected manifold. We may choose a fundamental domain $K\subset M$ to be a connected summand $S^2\times S^2$ with a small closed disc around $(p,p)$ and a small open disc around $(q,q)$ removed. The rotation by $\pi$ around the $z$-axis $S^2\to S^2$ defines a $\Z$-invariant map $f\colon M\to M$ such that, for every $g\in\Z$,
  \[
    gK\cap\Fix(f)=\{(p,q),(q,p)\}\text{ with }\ind_x((p,q))=\ind_y((q,p))=1.
  \]
  By construction, the map $f$ satisfies $d(f,1)<\infty$. Thus, the uniform Lefschetz class $\mathscr{L}(f)$ is defined. By Theorem \ref{Lefschetz-Hopf late}, it is represented by the constant function $2\cdot\mathbbm{1}\colon\Z\to\Z$, where $\mathbbm{1}$ denotes the constant function on $\Z$ with value $1$. Since $\Z$ is amenable, there exists a $\Z$-invariant positive linear map $\mu\colon\ell^\infty(\Z)\to\R$ satisfying $\mu(\mathbbm{1})=1$. This induces a linear map
  \[
    \bar{\mu}\colon\ell^\infty(\Z)_\Z\to\R
  \]
  such that $\bar{\mu}([\mathbbm{1}])=1$. Hence, $\bar{\mu}(\mathscr{L}(f))=2\ne 0$, yielding $\mathscr{L}(f)\ne 0$. Therefore, by Corollary \ref{infinitely many fixed-point}, any strongly tame map that is uniformly homotopic to $f$ has infinitely many fixed-points on $M$.
\end{example}


\subsection{Poincar\'e-Hopf theorem}\label{Poincare-Hopf theorem}

In \cite{KKT1}, the authors established the Poincar\'e-Hopf theorem for vector fields on Galois coverings of closed manifolds. We revisit this result in the setting of uniformly continuous vector fields.

A vector field $v$ on a Riemannian manifold $M$ is said to be \emph{bounded} if
\[
  \sup_{x\in M}|v(x)|<\infty.
\]
Note that $v$ is bounded if and only if it is bounded as a section of the projection $p\colon TM\to M$. Let $G\to M\to N$ be a Galois covering, where $N$ is a closed connected manifold. Fix a triangulation of $N$, and lift it to a $G$-invariant triangulation of $M$ as above. Choose a fundamental domain $K\subset M$. The notions of tame and strongly tame vector fields on $M$ were introduced in \cite{KKT1}. We now adapt these definitions to the uniformly continuous setting (cf. \cite[Definitions 5.4 and 5.5]{KKT1}).

\begin{definition}
  A uniformly continuous vector field $v$ on $M$ is called \emph{tame} if there exist $\delta>0$ and $\epsilon>0$ such that:

  \begin{enumerate}
    \item For any distinct zeros $x,y\in\mathrm{Zero}(v)$, one has $N_\delta(x)\cap N_\delta(y)=\emptyset$.

    \item For any $x\in M-N_\delta(\mathrm{Zero}(v))$, one has $|v(x)|\ge\epsilon$.
  \end{enumerate}

  \noindent If, in addtion, the following condition holds, then $v$ is called \emph{strongly tame}.

  \begin{enumerate}
    \item[(3)] For every $x\in\mathrm{Zero}(v)$, there is an $n$-simplex $\sigma$ of $M$ such that $N_\delta(x)\subset\sigma$.
  \end{enumerate}
\end{definition}

Let $G\to M\to N$ be as above. The \emph{index} of a tame vector field $v$ on $M$ is defined by
\[
  \ind(v)\colon G\to\Z,\quad g\mapsto\sum_{x\in gK\cap\mathrm{Zero}(v)}\ind_x(v),
\]
where $\ind_x(v)$ denotes the local index of $v$ at $x\in\mathrm{Zero}(v)$. By the definition of the tameness of vector fields together with uniformness, $|gK\cap\mathrm{Zero}(v)|$ is bounded as $g$ ranges over $G$; hence, $\ind(v)$ is a bounded function (cf. \cite[Theorem 1.1]{KKT1}).

\begin{theorem}
  \label{Poincare-Hopf}
  Let $G\to M\to N$ be a Galois covering, where $N$ is a closed connected oriented manifold. For a bounded strongly tame uniformly continuous vector field $v$ on $M$, one has
  \[
    \ind(v)=\chi(N)\mathbbm{1}\quad\text{in }\ell^\infty(G)_G,
  \]
  where $\mathbbm{1}$ denotes the constant function on $G$ with value $1$.
\end{theorem}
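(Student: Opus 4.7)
The plan is to realize the vector field $v$ as the infinitesimal generator of a self-map $f_v\colon M\to M$ whose fixed-point data encodes the zero data of $v$, then compare $\mathscr{L}(f_v)$ with the uniform Lefschetz class of a second self-map built from a nice vector field on $N$ that realizes $\chi(N)$ by the classical Poincar\'e--Hopf theorem.

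For sufficiently small $t>0$, set $f_v(x)=\exp_x(tv(x))$. Since $M$ has bounded geometry, its injectivity radius is bounded below, and for $t$ smaller than $\mathrm{inj}(M)/\sup_x|v(x)|$ the map $f_v$ is well defined. Boundedness and uniform continuity of $v$, combined with uniform Lipschitz estimates on the family $\{\exp_x\}_{x\in M}$ coming from bounded geometry, imply that $f_v$ is uniformly continuous and $d(f_v,1)\le t\sup_x|v(x)|<\infty$. The geodesic homotopy $H(x,s)=\exp_x(st\,v(x))$ is a uniform homotopy from the identity to $f_v$. The fixed points of $f_v$ are exactly the zeros of $v$, and the strong tameness of $v$ transfers to that of $f_v$: conditions (1) and (3) of Definition \ref{tame} are unchanged, and condition (2) follows from $d(x,f_v(x))=t|v(x)|\ge t\epsilon$ on $M-N_\delta(\Zero(v))$. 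A standard local degree calculation then gives $\ind_x(f_v)=\ind_x(v)$ at every zero.

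Now pick any smooth vector field $w$ on the compact manifold $N$ whose zeros are isolated, nondegenerate, and all contained in the interiors of $n$-simplices of the chosen triangulation (obtainable by a small perturbation). Lift $w$ to a $G$-invariant vector field $\tilde w$ on $M$ and form $f_{\tilde w}$ by the same exponential construction. Then $f_{\tilde w}$ is bounded, strongly tame, uniformly continuous, and uniformly homotopic to the identity via its geodesic flow. Concatenating homotopies yields a uniform homotopy between $f_v$ and $f_{\tilde w}$, so by the uniform homotopy invariance of $\mathscr{L}$ (which follows from Theorems \ref{homotopy invariance cohomology} and \ref{obstruction} applied to the pullback of the Fadell--Neuwirth bundle to $M\times[0,1]$), we obtain $\mathscr{L}(f_v)=\mathscr{L}(f_{\tilde w})$. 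Theorem \ref{Lefschetz-Hopf late} together with Proposition \ref{0-dim} and Lemma \ref{H_0 covering} then identifies $\mathscr{L}(f_v)$ with $[\ind(v)]\in\ell^\infty(G)_G$ and $\mathscr{L}(f_{\tilde w})$ with the class of
\[
  g\mapsto\sum_{x\in gK\cap\Zero(\tilde w)}\ind_x(\tilde w)=\sum_{x\in K\cap\Zero(w)}\ind_x(w)=\chi(N),
\]
where the first equality uses $G$-invariance of $\tilde w$ and the second is the classical Poincar\'e--Hopf theorem on $N$. Equating the two representatives yields $\ind(v)=\chi(N)\mathbbm{1}$ in $\ell^\infty(G)_G$.

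The main obstacle is ensuring that the auxiliary constructions all fit into the uniformly continuous, strongly tame framework required by Theorem \ref{Lefschetz-Hopf late}. In particular, the identification $\ind_x(f_v)=\ind_x(v)$, the uniform continuity and boundedness of $f_v$, the strong tameness of $f_v$, and the uniformity of the geodesic homotopy from the identity to $f_v$ all rest on uniform bi-Lipschitz control of the exponential map across $M$, which is provided precisely by the bounded geometry lifted from the compact base $N$.
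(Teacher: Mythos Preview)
Your argument has a genuine gap: it rests on Theorem~\ref{Lefschetz-Hopf late} and the uniform Lefschetz class $\mathscr{L}$, both of which require $M$ to be the \emph{universal} cover of $N$, i.e.\ $M$ simply-connected. Theorem~\ref{Poincare-Hopf}, however, is stated for an arbitrary Galois covering $G\to M\to N$. The simple-connectivity of $M$ enters through Lemma~\ref{admissible assumption}: for the Fadell--Neuwirth bundle the fiber pair is $(M,M-x_0)$, and one needs the $\pi_1(M-x_0)$-action on $\pi_n(M,M-x_0)$ to be trivial to even define the obstruction class. For a non-simply-connected $M$ this fails in general, so the class $\mathscr{L}(f_v)$ need not exist, and your reduction to Theorem~\ref{Lefschetz-Hopf late} cannot be carried out.

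The paper avoids this entirely by working with the tangent bundle rather than the Fadell--Neuwirth bundle: the relative bundle $(T_{x_0}M,T_{x_0}M-0)\to(TM,TM-M)\to M$ has fiber $(\R^n,\R^n-0)$, which is $n$-admissible unconditionally, so the obstruction class $\mathfrak{o}_n(TM,TM-M;v)$ is defined for any oriented $M$ of bounded geometry. The paper then identifies $\mathfrak{o}_n(TM,TM-M;s_0)$ with the pullback of the Euler class on each $gK$ directly, and compares with $v$ via the linear fiberwise homotopy to the zero section. This bypasses the exponential-map detour and, more importantly, the simple-connectivity hypothesis. If you restrict your statement to the universal-cover case your argument goes through, and indeed the exponential identification $\ind_x(f_v)=\ind_x(v)$ together with homotopy invariance of $\mathscr{L}$ gives a pleasant alternative proof in that setting; but to obtain the full theorem you must switch to the tangent-bundle obstruction.
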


\begin{proof}
  The manifold $M$ is smoothly uniform in the sense that there are diffeomorphisms $h_{x,y}\colon M\to M$ for all $x,y\in M$ such that the families $\{dh_{x,y}\}_{x,y\in M}$ and $\{dh_{x,y}^{-1}\}_{x,y\in M}$ are equicontinuous. Hence, an argument analogous to that in Lemma \ref{FN uniform} shows that the relative fiber bundle
  \[
    (T_{x_0}M,T_{x_0}M-0)\to(TM,TM-M)\xrightarrow{p}M
  \]
  is uniform and good, where $x_0$ is a basepoint of $M$. Clearly, $(T_{x_0}M,T_{x_0}M-0)$ is $n$-admissible, where $n=\dim M$. Since $M$ is oriented, condition (2) of Theorem \ref{obstruction} is satisfied, and thus the obstruction class
  \[
    \mathfrak{o}_n(TM,TM-M;s_0)
  \]
  is defined, where $s_0\colon M\to TM$ denotes the zero section. By construction, the restriction of $\mathfrak{o}_n(TM,TM-M;s_0)$ to $gK$ is the pullback of the Euler class $e(N)$ of $N$ to $gK$. Hence, for any $g\in G$,
  \[
    \mathfrak{o}_n(TM,TM-M;s_0)\vert_{gK}\frown[M]=e(N)\frown[N]=\chi(N),
  \]
  which shows that $\mathfrak{o}_n(TM,TM-M;s_0)\frown[M]$ is represented by $\chi(N)\mathbbm{1}\in\ell^\infty(G)$. Since the vector field $v$ is bounded, it is uniformly fiberwise homotopic to $s_0$. Consequently,
  \[
    \mathfrak{o}_n(TM,TM-M;s_0)=\mathfrak{o}_n(TM,TM-M;v)
  \]
  The proof of Theorem \ref{Lefschetz-Hopf} applies verbatim, showing that $\mathfrak{o}_n(TM,TM-M;v)\frown[M]$ is represented by $\ind(v)\in\ell^\infty(G)$. Therefore, the claim follows.
\end{proof}

We present two corollaries of Theorem \ref{Poincare-Hopf}. The first is a uniformly continuous analogue of \cite[Theorem 1.2]{KKT1}.

\begin{corollary}
  Let $G\to M\to N$ be as in Theorem \ref{Poincare-Hopf}. If $G$ is amenable and $\chi(N)\ne 0$, then any bounded uniformly continuous tame vector field on $M$ has infinitely many zeros.
\end{corollary}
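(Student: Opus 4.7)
The strategy is proof by contradiction. Assume $v$ is a bounded uniformly continuous tame vector field on $M$ whose zero set $\Zero(v)$ is finite. Since the conclusion is vacuous when $M$ is compact, we may assume $G$ is infinite; being the deck group of a Galois cover of a closed manifold, $G$ is also finitely generated. Under the finite-zero assumption, the index function
\[
  \ind(v)\colon G\to\Z,\quad g\mapsto\sum_{x\in gK\cap\Zero(v)}\ind_x(v)
\]
has finite support (only finitely many translates $gK$ of the relatively compact fundamental domain can meet $\Zero(v)$). By the same lemma used in the proof of Corollary \ref{infinitely many fixed-point}, namely \cite[Proposition 2.3]{KKT1} together with \cite[Lemma 7.7]{Wh}, any finitely supported $\Z$-valued function on a finitely generated infinite group represents the zero class in $\ell^\infty(G)_G$, so $[\ind(v)]=0$.

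Next I would upgrade $v$ from tame to strongly tame so that Theorem \ref{Poincare-Hopf} becomes applicable. Because $\Zero(v)=\{x_1,\ldots,x_k\}$ is finite, one can choose pairwise disjoint open neighborhoods $U_1,\ldots,U_k$ of these zeros, each contained, after finitely many globally $G$-invariant barycentric subdivisions of the triangulation, in the interior of a single open $n$-simplex. A $C^0$-small compactly supported smooth perturbation of $v$ inside $\bigsqcup_j U_j$ can be arranged to shift each $x_j$ to the barycenter of the corresponding $n$-simplex while creating no new zeros and preserving the local index at each original zero. The resulting vector field $v'$ is still bounded and uniformly continuous (the perturbation is globally bounded and compactly supported), is now strongly tame, and satisfies $\ind(v')=\ind(v)$ as functions on $G$. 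In particular $[\ind(v')]=[\ind(v)]=0$ in $\ell^\infty(G)_G$.

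Now Theorem \ref{Poincare-Hopf} applied to $v'$ yields $[\ind(v')]=\chi(N)[\mathbbm{1}]$, hence $\chi(N)[\mathbbm{1}]=0$ in $\ell^\infty(G)_G$. Amenability of $G$ furnishes a $G$-invariant mean $\mu\colon\ell^\infty(G)\to\R$ with $\mu(\mathbbm{1})=1$, which descends to a well-defined linear map $\bar\mu\colon\ell^\infty(G)_G\to\R$ with $\bar\mu([\mathbbm{1}])=1$. Evaluating $\bar\mu$ on both sides of $\chi(N)[\mathbbm{1}]=0$ gives $\chi(N)=0$, contradicting the hypothesis.

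The main obstacle is the tame-to-strongly-tame reduction: one must exhibit a local perturbation supported near the finitely many zeros that preserves bounded uniform continuity, introduces no new zeros, and preserves every local index. This is a routine local construction, but it is the only non-formal step; once it is carried out, the conclusion follows immediately from Theorem \ref{Poincare-Hopf} and the standard amenability argument via an invariant mean.
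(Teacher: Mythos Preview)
Your proof is correct and uses exactly the same ingredients as the paper's---the tame-to-strongly-tame reduction, Theorem \ref{Poincare-Hopf}, the finitely-supported-functions-vanish lemma from \cite{KKT1,Wh}, and the invariant mean coming from amenability---merely applied in a slightly different order (you deduce $[\ind(v)]=0$ first and then contradict $\chi(N)[\mathbbm{1}]\ne 0$, whereas the paper deduces $[\ind(v)]\ne 0$ and then contradicts finite support). One small slip: when $M$ is compact the conclusion is not vacuous but actually false, so the passage to infinite $G$ should be phrased as invoking the paper's implicit noncompactness hypothesis rather than vacuity.
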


\begin{proof}
  Suppose that a tame uniformly continuous vector field $v$ on $M$ has finitely many zeros. We can deform $v$ by a uniform homotopy to a strongly tame uniformly vector field whose zeros coincide those of $v$. Hence, we may assume that $v$ is strongly tame. By the argument analogous to Example \ref{connected sum} using an invariant mean, we obtain $\chi(N)\mathbbm{1}\ne 0$ in $\ell^\infty(G)_G$. Hence, by Theorem \ref{Poincare-Hopf}, $\ind(v)\ne 0$, yielding
  \[
    \sum_{x\in gK\cap\mathrm{Zero}(v)}\ind_x(v)\ne 0
  \]
  for infinitely many $g\in G$. Therefore the claim follows.
\end{proof}

The second corollary is new, as it does not follow from the smooth setting considered in \cite{KKT1}, where the method employed there cannot establish it since it does not involve the obstruction theoretic ideas introduced in the present paper.

\begin{corollary}
  Let $G\to M\to N$ be as in Theorem \ref{Poincare-Hopf}. If $\chi(N)\mathbbm{1}=0$ in $\ell^\infty(G)_G$, then there exists a nonvanishing bounded uniformly continuous vector field on $M$.
\end{corollary}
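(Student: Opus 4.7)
The strategy is to reverse the argument of Theorem \ref{Poincare-Hopf}: use obstruction theory to show that the vanishing of $[\chi(N)\mathbbm{1}]\in\ell^\infty(G)_G$ forces the top obstruction to deforming a section of $p\colon TM\to M$ into $TM-M$ to vanish as well. As in the proof of Theorem \ref{Poincare-Hopf}, the relative fiber bundle $(T_{x_0}M, T_{x_0}M - 0)\to(TM, TM - M)\xrightarrow{p}M$ is good and uniform, the fiber pair is homeomorphic to $(\R^n,\R^n-0)$ and therefore $k$-admissible for every $k\ge 1$ (with $\pi_k=0$ for $k<n$ and $\pi_n\cong\Z$), and orientability of $M$ ensures that condition (2) of Theorem \ref{obstruction} holds. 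The case $n=1$ is trivial: only $G=\Z$ acts freely and cocompactly on $\R$, and the constant unit vector field suffices. Assume from now on $n\ge 2$.

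First I would build a bounded uniformly continuous section $s_0\colon M\to TM$ with $s_0(M_0)\subset\mathrm{Int}_{\epsilon_0}(TM-M)$ by assigning small nonzero vectors at vertices via the uniform trivializations and patching with a uniform partition of unity (the uniform discreteness of $M_0$ together with the uniformness of the bundle makes this routine). Since $\pi_k(T_{x_0}M,T_{x_0}M-0)=0$ for $1\le k<n$, iterating Theorem \ref{obstruction} at levels $k=1,\ldots,n-1$ produces bounded uniformly continuous sections $s_k\colon M\to TM$ with $s_k(M_k)\subset\mathrm{Int}_{\epsilon_k}(TM-M)$, culminating in $s_{n-1}$ and a top obstruction class
\[
\mathfrak{o}_n(TM, TM - M; s_{n-1})\in H^n_\mathrm{ub}(M;\underline{\Z}).
\]
The verbatim computation from the proof of Theorem \ref{Poincare-Hopf} identifies $\mathfrak{o}_n(TM, TM - M; s_{n-1})\frown[M]$ with $[\chi(N)\mathbbm{1}]$ under the isomorphism $H_0^\mathrm{ub}(M)\cong H_0^\mathrm{uf}(M)\cong\ell^\infty(G)_G$ of Proposition \ref{0-dim} and Lemma \ref{H_0 covering}. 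The hypothesis then gives $\mathfrak{o}_n(TM, TM - M; s_{n-1})\frown[M]=0$, and Poincar\'e duality (Theorem \ref{Poincare duality}, applicable since the coefficient group is $\Z$) yields $\mathfrak{o}_n(TM, TM - M; s_{n-1})=0$.

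One more application of Theorem \ref{obstruction} deforms $s_{n-1}$ by a uniform fiberwise homotopy to a bounded uniformly continuous section $s_n\colon M\to TM$ satisfying $s_n(M)=s_n(M_n)\subset\mathrm{Int}_{\epsilon_n}(TM-M)$, which is precisely the desired nonvanishing bounded uniformly continuous vector field. The main point requiring care will be verifying that the inductive construction of $s_0,\ldots,s_{n-1}$ preserves uniform continuity and the bound at every step and respects the bundle trivializations; beyond this bookkeeping, the argument is mechanical given the obstruction machinery and Poincar\'e duality already in hand.
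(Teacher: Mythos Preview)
Your proposal is correct and follows essentially the same approach as the paper: both arguments identify $\mathfrak{o}_n(TM,TM-M;s)\frown[M]$ with $[\chi(N)\mathbbm{1}]$ via the computation in Theorem \ref{Poincare-Hopf}, invoke Poincar\'e duality (Theorem \ref{Poincare duality}) to conclude $\mathfrak{o}_n=0$, and apply Theorem \ref{obstruction} to obtain the desired nonvanishing section. The paper's proof is more terse, simply citing ``as in the proof of Theorem \ref{Poincare-Hopf}'' for the identification and skipping the explicit skeletal induction you spell out, but the logical content is the same.
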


\begin{proof}
  If $\chi(N)\mathbbm{1}=0$ in $\ell^\infty(G)_G$, then, as in the proof of Theorem \ref{Poincare-Hopf},
  \[
    \mathfrak{o}_n(TM,TM-M;v)\frown[M]=0.
  \]
  Hence, by Theorem \ref{Poincare duality}, we have $\mathfrak{o}_n(TM,TM-M;v)=0$. By Theorem \ref{obstruction}, $v$ is uniformly homotopic to a nonvanishing vector field $w$. Clearly, $w$ is bounded and uniformly continuous, and thus the claim follows.
\end{proof}


\begin{thebibliography}{99}
  \bibitem{BW} J. Block and S. Weinberger, Aperiodic tilings, positive scalar curvature and amenability of spaces, J. Amer. Math. Soc. \textbf{5} (1992), no. 4, 907-918.

  \bibitem{BDG} J.-D. Boissonnat, R. Dyer, and A. Ghosh. Delaunay triangulation of manifolds, Found. Comput. Math. \textbf{18} (2018), no. 2, 399-431.

  \bibitem{B} B.H. Bowditch, Bilipschitz triangulations of Riemannian manifolds, \texttt{https://bhbowditch.com/papers/triangulations.pdf}.

  \bibitem{BNW} J. Brodzki, G.A. Niblo, and N. Wright, Pairings, duality, amenability and bounded cohomology, J. Eur. Math. Soc. \textbf{14} (2012), 1513-1518.

  \bibitem{CS} A. Calder and J. Siegel, Homotopy and uniform homotopy, Trans. Amer. Math. Soc. \textbf{235} (1978), 245-270.

  \bibitem{D} A. Dold, Lectures on Algebraic Topology, Springer, Second Editions, 1995.

  \bibitem{E} A. Engel, Wrong way maps in uniformly finite homology and homology of groups, J. Homotopy Relat. Struct. \textbf{13} (2018), no. 2, 423-441.

  \bibitem{F} E. Fadell, On a coincidence theorem of F.B. Fuller, Pacific J. Math. \textbf{15} (1965), no. 3, 825-834.

  \bibitem{FN} E. Fadell and L. Neuwirth, Configuration spaces, Math. Scand. \textbf{10} (1962), 111-118.

  \bibitem{G} M. Gromov, Volume and bounded cohomology, Publ. Math. Inst. Hautes \'{E}tudes Sci. \textbf{56} (1982), 5-99.

  \bibitem{GL} M. Gromov and H.B. Lawson Jr., Positive scalar curvature and the Dirac operator on complete Riemannian manifolds, Publ. Math. IH\'{E}S \textbf{58} (1983), 295-408.

  \bibitem{H} A. Hatcher, Algebraic Topology, Cambridge University Press, 2002.

  \bibitem{Ho} P. Hochs, A Lefschetz fixed-point formula for noncompact fixed point sets, Math. Z. \textbf{311} (2025), 1-29.

  \bibitem{HW} P. Hochs and H. Wang, A fixed point theorem on noncompact manifolds,
  Ann. $K$-theory, \textbf{3} (2018), 235-286.

  \bibitem{KKT1} T. Kato, D. Kishimoto, and M. Tsutaya, Vector fields on non-compact manifolds, Algebr. Geom. Topol. \textbf{24} (2024), no. 7, 3985-3996.

  \bibitem{KKT2} T. Kato, D. Kishimoto, and M. Tsutaya, Morse inequalities for noncompact manifolds, \texttt{arXiv:2411.04463}.

  \bibitem{K}J.L. Kelly, General Topology, Graduate Texts in Mathematics \textbf{27}, 2nd ed., New York, Springer-Verlag.

  \bibitem{LR} W. L\"{u}ck and J. Rosenberg, The equivariant Lefschetz fixed point theorem for proper cocompact $G$-manifolds, In High-dimensional manifold topology, p. 322-361. World Sci. Publ., River Edge, NJ, 2003.

  \bibitem{R1} J. Roe, An index theorem on open manifolds. I, J. Differential Geom. \textbf{27} (1988), no. 1, 87-113.

  \bibitem{R2} J. Roe, An index theorem on open manifolds. II, J. Differential Geom. \textbf{27} (1988), no. 1, 115-136.

  \bibitem{Wb} C.A. Weibel, An Introduction to Homological Algebra, Cambridge Studies in Advanced Math. \textbf{38}, Cambridge Univ. Press, 1994,

  \bibitem{W} S. Weinberger, Fixed-point theories on noncompact manifolds, J. Fixed Point Theory Appl. \textbf{6} (2009), 15-25.

  \bibitem{Wh} K. Whyte, Amenability, bilipschitz equivalence, and the von Neumann conjecture, Duke Math. J. \textbf{99} (1999), 93-112.
\end{thebibliography}
\end{document}